\newtheorem{theorem}{Theorem}[section]
\newtheorem{lemma}[theorem]{Lemma}
\newtheorem{proposition}[theorem]{Proposition}
\newtheorem{corollary}[theorem]{Corollary}
\newtheorem{conjecture}[theorem]{Conjecture}
\newtheorem{question}[theorem]{Question}
\theoremstyle{definition}
\newtheorem{definition}[theorem]{Definition}
\newtheorem{remark}[theorem]{Remark}
\newtheorem{example}[theorem]{Example}
\newcommand{\op}[1]{\operatorname{#1}}
\newcommand{\newterm}{\textsf}
\newcommand{\dbcoh}[1]{\operatorname{D}^{\operatorname{b}}(\operatorname{coh }#1)}
\newcommand{\dsing}[1]{\operatorname{D}_{\operatorname{sg}}(#1)}
\newcommand{\gm}{\mathbb{G}_m}
\def\N{\op{\mathbb{N}}}
\def\Z{\op{\mathbb{Z}}}
\def\R{\op{\mathbb{R}}}
\def\Q{\op{\mathbb{Q}}}
\def\O{\op{\mathcal{O}}}
\def\A{\op{\mathbb{A}}}
\def\P{\op{\mathbf{P}}}
\def\T{\op{\mathcal{T}}}
\def\tif{\text{if } }
\def\tand{\text{ and } }
\title[The Batyrev-Nill Conjecture]{Proof of a conjecture of Batyrev and Nill}
\author[Favero]{David Favero}
\address{
  \begin{tabular}{l}
   David Favero \\
   \hspace{.1in} University of Alberta, Department of Mathematics,  Edmonton, AB Canada \\
   \hspace{.1in} Email: {\bf favero@ualberta.ca} \\
  \end{tabular}
}
\author[Kelly]{Tyler L. Kelly}
\address{
  \begin{tabular}{l}
   Tyler L. Kelly \\ 
   \hspace{.1in} University of Cambridge, Department of Pure Mathematics and Mathematical \\ \hspace{.1in} Statistics,  Wilberforce Road, Cambridge, United Kingdom CB3 0WB \\
   \hspace{.1in} Email: {\bf tlk20@dpmms.cam.ac.uk} \\
  \end{tabular}
}
\numberwithin{equation}{section}
\begin{document}

\begin{abstract}
We prove equivalences of derived categories for the various mirrors in the Batyrev-Borisov construction.    In particular, we obtain a positive answer to a conjecture of Batyrev and Nill. The proof involves passing to an associated category of singularities and toric variation of geometric invariant theory quotients.
\end{abstract}

\maketitle

 \section{Introduction}

Witten proposed that two $2$-dimensional topological field theories are derived from a conformal field theory with target space a Calabi-Yau manifold $\mathcal{M}$: the $A$-model and the $B$-model \cite{Witten}.  The $A$-model depends only on the symplectic structure of $\mathcal{M}$ and the $B$-model only on the complex structure.  Mirror symmetry is a duality which proposes the existence of a mirror Calabi-Yau manifold $\mathcal{W}$ on which the $A$-model and $B$-model are transposed from those corresponding to $\mathcal{M}$.
  
In 1994, Kontsevich described a categorical approach to this exchange of symplectic and complex structures \cite{Kontsevich}. Kontsevich articulated that given a mirror pair of Calabi-Yau manifolds $\mathcal{M}$ and $\mathcal{W}$, the Fukaya category of $\mathcal{M}$ (the A-model) should be equivalent to the bounded derived category of coherent sheaves of its mirror $\mathcal{W}$ (the B-model) and vice versa, i.e.,
\[
\op{Fuk}(\mathcal{M}) \cong \dbcoh{\mathcal{W}} \tand \op{Fuk}(\mathcal{W}) \cong \dbcoh{\mathcal{M}} .
\]
This deep conjecture known as Homological Mirror Symmetry has motivated a myriad of geometric research in the last twenty years, extending from results in theoretical physics and algebraic and symplectic geometry, to category and number theory.  

As a consequence of the Homological Mirror Symmetry Conjecture, the derived category of a mirror should not depend  on the construction of the mirror.  If we have multiple mirrors $\mathcal W_1, \ldots, \mathcal W_r$ that arise from various choices of mirror construction on $\mathcal M$, then we expect that these mirrors have equivalent derived categories, i.e., 
\[
\op{Fuk}(\mathcal{M}) \cong  \dbcoh{\mathcal{W}_1} \cong \ldots \cong  \dbcoh{\mathcal{W}_r}.
\]

Even in the foundational mirror construction of Batyrev and Borisov \cite{BB96} for Calabi-Yau complete intersections in toric varieties, one may obtain non-isomorphic mirrors by making different combinatorial choices.  In this paper, we prove that these distinct mirrors have equivalent derived categories of coherent sheaves, fulfilling Kontsevich's prediction for multiple mirrors. This is a positive answer to a conjecture of Batyrev and Nill \cite{BN07}. The central technique, following \cite{HHP, HW}, is to compare geometric invariant theory quotients for the Cox construction of the total space of the vector bundle associated to the complete intersection.  We now provide a precise mathematical explanation of our results. 

\subsection{Results}
 Let $\kappa$ be an algebraically closed field of characteristic zero. Take a lattice $M$ of dimension $d$ with dual lattice $N$. Given a reflexive polytope $\Delta \subseteq M_{\R}:= M \otimes_{\Z} \R$, one associates a Fano toric variety $\mathbb{P}_{\Delta}$. Suppose one can decompose the polytope $\Delta$ into a Minkowski sum of $r$ polytopes $\Delta_i$
$$
\Delta = \Delta_1 + \ldots + \Delta_r.
$$
Each of the polytopes $\Delta_i$ correspond to a torus-invariant nef divisor $D_i$. One then may take a regular section $f$ of the toric vector bundle $\bigoplus_i \mathcal{O}(D_i)$. Taking its zero locus gives a Calabi-Yau complete intersection $X_{(\Delta_i)}=Z(f)$ in the toric variety $\mathbb{P}_{\Delta}$. Borisov proposed a mirror Calabi-Yau to $X_{(\Delta_i)}$ \cite{Bor93}. Take the polytopes
$$
\nabla_i := \{ n \in N_{\R} | \langle \Delta_j, n\rangle \geq -\delta_{ij} \text{ for all $j$}\}.
$$
Take the Minkowski sum of these $\nabla_i$ to form $\nabla := \nabla_1 + \ldots + \nabla_r$. Each polytope $\nabla_i$ corresponds to a nef line bundle $E_i$ 
in the Fano toric variety $\mathbb{P}_{\nabla}$. Taking a regular section $g$ of the toric vector bundle $\bigoplus_i \mathcal{O}(E_i)$ and then taking its zero locus gives a Calabi-Yau complete intersection $X_{(\nabla_i)}=Z(g)$ in $\mathbb{P}_\nabla$.

Note that all of these varieties are highly singular. To remedy this, Batyrev gave a maximal projective crepant partial (MPCP) desingularization by replacing the toric varieties $\mathbb{P}_\Delta$ and $\mathbb{P}_\nabla$ with simplicial refinements of the normal fans $\Sigma_{N(\Delta)}$.  These are maximal projective crepant partial resolutions with at most orbifold singularities, hence, coarse moduli spaces of smooth DM stacks.  This resolves the singularities two ways; take the MPCP desingularization, then take the associated DM stack.

Let us denote by $Z_{(\Delta_i)}$ and $Z_{(\nabla_i)}$  the Calabi-Yau stack corresponding to the MPCP desingularization of the data of the nef partitions $\Delta_i$ and $\nabla_i$ and by $\P_{\Delta}$ and $\P_{\nabla}$ the MPCP desingularization  of the toric stacks associated to the polytopes $\Delta$ and $\nabla$.

In 1996, Batyrev and Borisov \cite{BB96} proved that: 

\begin{theorem}[Batyrev-Borisov]\label{BB96}
Mirror symmetry holds for $X_{(\Delta_i)}$, $X_{(\nabla_i)}$ on the level of stringy Hodge numbers:
$$
h^{p,q}_{\text{st}} (X_{(\Delta_i)}) = h^{d-r-p, q}_{\text{st}}(X_{(\nabla_i)}),
$$
i.e.,
$$
h^{p,q} (Z_{(\Delta_i)}) = h^{d-r-p, q}(Z_{(\nabla_i)}).
$$

\end{theorem}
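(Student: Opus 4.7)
The plan is to work with the stringy $E$-function, which packages the stringy Hodge numbers via
$$
E_{\text{st}}(X;u,v) \;=\; \sum_{p,q} (-1)^{p+q}\, h^{p,q}_{\text{st}}(X)\, u^p v^q,
$$
so that the asserted Hodge number identity is equivalent to the functional equation
$$
E_{\text{st}}\bigl(X_{(\Delta_i)};u,v\bigr) \;=\; (-u)^{d-r}\, E_{\text{st}}\bigl(X_{(\nabla_i)};u^{-1},v\bigr).
$$
First I would express each side combinatorially. For the Fano toric stack $\mathbb{P}_\Delta$, Batyrev's formula computes $E_{\text{st}}(\mathbb{P}_\Delta;u,v)$ as an alternating sum over faces of $\Delta$, weighted by $B$-polynomials of the corresponding cones. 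An inclusion-exclusion/adjunction analysis for a regular section of the split toric bundle $\bigoplus_i \mathcal{O}(D_i)$, combined with the motivic behavior of $E_{\text{st}}$ under the torus stratification, reduces $E_{\text{st}}(X_{(\Delta_i)};u,v)$ to a double sum indexed by faces of $\Delta$ arising as Minkowski sums of faces of the $\Delta_i$.

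The next step is to rewrite this double sum in the symmetric form introduced by Borisov via graded lattice cones. Concretely, I would pair each face $\Theta\subseteq\Delta$ compatible with the nef partition with a dual face $\Theta^\star\subseteq\nabla$ induced by the defining inequalities of the $\nabla_i$, and then package the $B$-polynomial contributions from $\Theta$ together with an analogous $S$-polynomial contribution from $\Theta^\star$. The point of this repackaging is that the duality sending $(\Delta_i)\mapsto(\nabla_i)$ exchanges the two families of faces and swaps the roles of the $B$- and $S$-polynomials. Once the formula is put in this bilinear form
$$
E_{\text{st}}\bigl(X_{(\Delta_i)};u,v\bigr) \;=\; \sum_{\Theta,\Theta^\star}\, B_\Theta(u,v)\, S_{\Theta^\star}(u,v)\, (\text{dimension-dependent prefactor}),
$$
the desired functional equation becomes a term-by-term comparison under the duality.

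The final step is the core combinatorial identity: the transformation rules of the $B$- and $S$-polynomials under $u\mapsto u^{-1}$, combined with the dimensions of $\Theta$ and $\Theta^\star$ summing to a fixed quantity determined by $d$ and $r$, should produce exactly the global prefactor $(-u)^{d-r}$. I expect the main obstacle to be precisely this bookkeeping: one must verify that the lattice-point counts and reciprocity properties of $B$ and $S$ (of Ehrhart/Stanley-Reisner type) interact cleanly with the shift coming from the canonical bundle of the complete intersection, so that every dimensional contribution lines up. Granting the Batyrev--Borisov reciprocity for these polynomials, the identity follows by summing over the duality-matched pairs $(\Theta,\Theta^\star)$ and extracting the coefficient of $u^p v^q$ to recover the stated symmetry of stringy Hodge numbers.
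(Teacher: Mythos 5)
This statement is not proved in the paper you were given: it is quoted as background, with the proof attributed to Batyrev and Borisov \cite{BB96}, so there is no in-paper argument to compare against. Your outline does track the strategy of the original \cite{BB96} proof — encode the stringy Hodge numbers in the stringy $E$-function, reduce the complete intersection $X_{(\Delta_i)}$ to combinatorial data, write $E_{\text{st}}$ as a bilinear sum of $S$- and $B$-polynomial contributions over duality-matched pairs of faces, and read off the functional equation $E_{\text{st}}(X_{(\Delta_i)};u,v)=(-u)^{d-r}E_{\text{st}}(X_{(\nabla_i)};u^{-1},v)$ — and the functional equation you state is the correct reformulation of the asserted symmetry.

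However, as a proof your proposal has a genuine gap: the two decisive ingredients are invoked rather than established. First, the reduction of $E_{\text{st}}(X_{(\Delta_i)};u,v)$ to a closed combinatorial formula is not a routine adjunction or inclusion-exclusion over faces of $\Delta$; in \cite{BB96} the bookkeeping is organized by the Eulerian poset of faces of the reflexive Gorenstein (Cayley) cone $\sigma$ attached to the nef partition, and the passage from the torus stratification of the complete intersection to that formula is a substantial computation (lattice-point generating functions of cones, nondegeneracy of the section, and the definition of $h^{p,q}_{\text{st}}$ itself for the singular model). Second, the ``core combinatorial identity'' you defer — the reciprocity of the $S$- and $B$-polynomials under the poset duality $\sigma\leftrightarrow\sigma^{\vee}$, with the dimension count producing exactly the prefactor $(-u)^{d-r}$ — is precisely the technical heart of Batyrev--Borisov's paper, so writing ``granting the Batyrev--Borisov reciprocity'' makes the argument circular as a standalone proof. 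To complete it you would need to state and prove the explicit formula for $E_{\text{st}}$ of a Calabi--Yau complete intersection in terms of $\sigma$, verify that the duality $(\Delta_i)\mapsto(\nabla_i)$ corresponds to $\sigma\mapsto\sigma^{\vee}$ (the content of Proposition~\ref{prop: Cayley comparison} and its relatives), and prove the $B$-polynomial functional equation on the dual poset; with those in hand the term-by-term comparison you describe does finish the argument.
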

Interestingly, there can sometimes be an ambiguity in the Batyrev-Borisov construction. Take $0$ to be the unique interior point in the reflexive polytope $\Delta$ and a nef-partition $\Delta_i$ of $\Delta$. Then suppose that there exists elements $p_i, p_i' \in \Delta_i$ so that
$$
0 = \sum_i p_i = \sum_i p_i'.
$$
Then there exists two nef partitions that are translates of one another:
$$
\Delta  = \sum_i \Delta_i = \sum_i (\Delta_i - p_i + p_i').
$$
Denote the translated polytope by  $\Delta_i' := \Delta_i - p_i + p_i'$. The generic Calabi-Yau complete intersections corresponding to $\Delta_i$ and $\Delta_i'$ are isomorphic
$$
X_{(\Delta_i)} \cong X_{(\Delta_i')}.
$$
Now take the dual polytopes $\nabla_i$ and $\nabla_i'$ associated to the nef partitions $\Delta_i$ and $\Delta_i'$. One finds that the polytope  $\nabla = \nabla_1 + \ldots+ \nabla_r$ is often different from $\nabla' = \nabla_1' + \ldots + \nabla_r'$. So, the Calabi-Yau complete intersection $X_{(\nabla_i)}$ corresponds to a zero locus of a section $g$ of the vector bundle $\bigoplus_i \mathcal{O}_{\mathbb{P}_\nabla}(E_i)$ while $X_{(\nabla_i')}$ corresponds to a zero locus of a section $g'$ of some other vector bundle $\bigoplus_i \mathcal{O}_{\mathbb{P}_{\nabla'}}(E_i')$. This means that the mirror Calabi-Yau complete intersections $X_{(\nabla_i)}$ and $X_{(\nabla_i')}$ are in possibly different toric varieties $\mathbb{P}_\nabla$ and $\mathbb{P}_{\nabla'}$. There is no natural isomorphism amongst the Batyrev-Borisov mirrors $X_{(\nabla_i)}$ and $X_{(\nabla_i')}$. In 2007, Batyrev and Nill formulated the following question about these varieties:
\begin{question}[Question 5.2 of \cite{BN07}]
Are the mirror Calabi-Yau complete intersections $X_{(\nabla_i)}$ and $X_{(\nabla_i')}$ birationally isomorphic?
\end{question}
This question was studied by Li and answered affirmatively (Theorem 4.6 of \cite{Li13}) using determinantal varieties.  Batyrev and Nill further conjectured the following:

\begin{conjecture}[Conjecture 5.3 of \cite{BN07}]
There exists an equivalence between the derived category of coherent sheaves on the Calabi-Yau complete intersections $Z_{(\nabla_i)}$ and $Z_{(\nabla_i')}$.
\label{conj: BN}
\end{conjecture}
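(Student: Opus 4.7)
The strategy, consistent with the approach outlined in the introduction, will be to pass to categories of singularities and then compare them via toric variation of GIT. Fix a regular section $g=(g_1,\dots,g_r)$ of $\bigoplus_i \mathcal{O}_{\P_\nabla}(E_i)$ cutting out $X_{(\nabla_i)}$, let $V_\nabla := \op{Tot}\bigl(\bigoplus_i \mathcal{O}_{\P_\nabla}(-E_i)\bigr)$ with fiber coordinates $y_1,\dots,y_r$, and put $W := \sum_i y_i g_i$. Give $V_\nabla$ the $\gm$-action scaling the $y_i$ with weight one. A theorem of Isik and Shipman (in the spirit of Orlov's Kn\"orrer periodicity), adapted to toric Deligne--Mumford stacks, will yield
\[
\dbcoh{Z_{(\nabla_i)}} \simeq \dgrsing{V_\nabla}{W},
\]
and an analogous equivalence will hold for the primed data. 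It therefore suffices to exhibit an equivalence $\dgrsing{V_\nabla}{W} \simeq \dgrsing{V_{\nabla'}}{W'}$.

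\textbf{Common VGIT setup.} To produce this second equivalence, I would assemble a single affine Landau--Ginzburg model $(\mathbb{A}^N, \widetilde{W})$ carrying an action of a torus $T$ whose secondary fan contains two chambers $\mathcal{C}, \mathcal{C}'$ whose semistable loci recover the LG models $(V_\nabla, W)$ and $(V_{\nabla'}, W')$ respectively. The coordinates of $\mathbb{A}^N$ are to be indexed by $\op{vert}(\nabla) \cup \op{vert}(\nabla') \cup \{y_1,\dots,y_r\}$; $T$ will contain a distinguished $\gm$ grading the $y_i$ with weight one; and $\widetilde{W}$ will be a $T$-equivariant regular function restricting to $W$ and $W'$ on the respective semistable loci. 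The hypothesis $\sum_i p_i = \sum_i p_i' = 0$ should translate into the statement that the characters defining $\mathcal{C}$ and $\mathcal{C}'$ are joined by a wall-crossing path every chamber of which preserves the Calabi--Yau grading.

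\textbf{Window equivalence and main obstacle.} Finally, I will invoke the window theorem for graded singularity categories of toric VGIT quotients (Ballard--Favero--Katzarkov, and independently Halpern-Leistner): at each wall crossed along the path from $\mathcal{C}$ to $\mathcal{C}'$, the Calabi--Yau condition forces the $\mu$-weight of the canonical class on the unstable stratum to vanish, so the induced window functor is a genuine equivalence rather than a strict semiorthogonal inclusion. Composing these wall-crossing equivalences will yield $\dgrsing{V_\nabla}{W} \simeq \dgrsing{V_{\nabla'}}{W'}$ and hence the conjecture. The principal obstacle is in the second step: one must verify that the translation ambiguity $p_i \mapsto p_i'$ transcribes exactly into a sequence of wall-crossings in a single secondary fan compatible with a single superpotential $\widetilde{W}$, and that the Calabi--Yau grading is maintained at every intermediate chamber. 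Once this combinatorial reconciliation of the two a priori distinct toric stacks into one Cox datum is in place, the window machinery delivers the equivalence essentially automatically.
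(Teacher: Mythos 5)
Your overall route is the same as the paper's: reduce via Isik--Shipman to (graded) singularity categories of the LG models on the bundle total spaces, then compare those by toric VGIT under a Calabi--Yau condition using Herbst--Walcher/BFK/Halpern-Leistner wall crossings. The gap is precisely in the step you yourself flag as "the principal obstacle," and it is where the actual content of the theorem lives. Your proposed common Cox datum, with coordinates indexed by $\op{vert}(\nabla)\cup\op{vert}(\nabla')\cup\{y_1,\dots,y_r\}$ and a \emph{single shared} set of fiber variables $y_i$, cannot be made to work. The correct common model takes coordinates indexed by \emph{all} lattice points $\nu=\overline N\cap\sigma^\vee_{(1)}$ of the degree-one slice of the common dual Cayley cone, acted on by the Cox torus $S_\nu$; the two stacks $\left[\op{tot}\bigl(\oplus_i\O_{\P_\nabla}(-E_i)\bigr)/\gm\right]$ and its primed analogue arise as GIT quotients for two chambers corresponding to the two regular triangulations induced by $\Sigma_{-D_1,\dots,-D_r}$ and $\Sigma'_{-D_1',\dots,-D_r'}$. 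Crucially, the fiber coordinates on the two sides are the variables $x_{e_i}$ and $x_{e_i'}$ attached to the two different collections of lattice points with $\sum_i e_i=\sum_i e_i'=\deg^\vee$: these are \emph{distinct} coordinates of the ambient affine space, agreeing only on the common part of the two splittings. The Cayley identification of sections genuinely mixes base and fiber directions (in the paper's own example a degree-$2$ section on $\mathbb P(1:1:2)$ corresponds to the base coordinate $y_0$ on $\P^1\times\P^1$), and the superpotential is one and the same $S_\nu$-invariant function $w=\sum_{m\in\sigma_{(1)}}c_m x^m$, which is linear in the $x_{e_i}$ and, in a different grouping, linear in the $x_{e_i'}$; there is no presentation $\widetilde W=\sum_i y_i(\cdot)$ in a single shared set of $y$'s valid in both chambers. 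With your coordinate scheme the two LG models simply do not sit in one secondary fan with one superpotential, so the wall-crossing step has nothing to act on.

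A second, related omission: even with the correct ambient space, the auxiliary $R$-charge $\gm$ (weight one on $x_{e_1},\dots,x_{e_r}$, weight zero elsewhere) is fiberwise dilation in the unprimed chamber but is \emph{not} visibly fiberwise dilation in the primed chamber, and Isik--Shipman on the primed side requires it to be. The paper resolves this by exhibiting the one-parameter subgroup of $S_\nu$ determined by the relation $\sum_{i>c}e_i=\sum_{i>c}e_i'$ and composing with an automorphism of $S_\nu\times\gm$ so that the projection character becomes fiberwise dilation for the primed bundle structure; your setup assumes this compatibility implicitly (by sharing the $y_i$), which is circular. Two smaller points: the chambers should be built from the rays of the MPCP (simplicial) refinements, i.e.\ lattice points rather than just vertices, with unused lattice points absorbed as torus factors; and the Calabi--Yau input is a property of the torus $S_\nu$ (the quasi-Calabi--Yau condition, automatic here since $\nu$ lies in a hyperplane at height one), so once it holds every wall crossing is an equivalence --- there is nothing to check chamber by chamber. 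You also need the hypothesis that $Z$ and $Z'$ have the expected dimension (exactness of the Koszul complexes) for both applications of Isik--Shipman.
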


In Section~\ref{sec: BN}, we supply a proof i.e. we prove the following Theorem.
\begin{theorem}[=Theorem \ref{BNconjecturetheorem}]\label{BNyes}
Conjecture~\ref{conj: BN} holds, meaning, there is a correspondence between complete intersections $Z_{(\nabla_i)}$ in $\P_\nabla$ and complete intersections $Z_{(\nabla_i')}$ in $\P_{\nabla'}$ such that
\[
\dbcoh{Z_{(\nabla_i)}} \cong \dbcoh{Z_{(\nabla_i')}}.
\]
\end{theorem}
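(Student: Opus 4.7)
The plan is to execute the Landau--Ginzburg plus variation-of-GIT strategy alluded to in the introduction. First I would replace each derived category of the complete intersection by a derived category of factorizations of an associated Landau--Ginzburg (LG) model, using the gauged Kn\"orrer periodicity of Ballard--Favero--Katzarkov (in the Isik--Shipman tradition). Explicitly, let $V_\nabla := \op{Tot}(\bigoplus_i \O_{\P_\nabla}(-E_i))$ with fiber coordinates $u_1, \ldots, u_r$, let $W_\nabla := \sum_i u_i g_i$ be the tautological superpotential, and let $\gm$ act by scaling the fibers so that $W_\nabla$ has weight one. Then
\[
\dbcoh{Z_{(\nabla_i)}} \cong \dabs(\op{fact}(V_\nabla, W_\nabla)),
\]
and analogously $\dbcoh{Z_{(\nabla_i')}} \cong \dabs(\op{fact}(V_{\nabla'}, W_{\nabla'}))$. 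This reduces the theorem to comparing two LG models on toric Deligne--Mumford stacks.

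Next I would exhibit both $(V_\nabla, W_\nabla)$ and $(V_{\nabla'}, W_{\nabla'})$ as distinct GIT chambers of a single ambient affine gauge theory $(\A^N, G, \mathcal{W})$. The coordinate ring of $\A^N$ would be the polynomial ring on variables indexed by the lattice points of $(\nabla \cup \nabla') \cap N$ together with the fiber variables $u_1, \ldots, u_r$, with $G$ an enlarged torus whose character lattice contains the class groups of both $V_\nabla$ and $V_{\nabla'}$, and with $\mathcal{W}$ a single $G$-invariant superpotential restricting on each chamber to the appropriate $W$. Granting this, I would connect the two stability chambers by a path in the character space of $G$ and apply the window theorem for derived factorization categories (Ballard--Favero--Katzarkov; compare Halpern-Leistner) at each wall. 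The Calabi--Yau condition, combined with the weight-one grading of $\mathcal{W}$, forces the numerical discrepancy invariant to vanish at every wall, so each wall-crossing yields a genuine equivalence rather than a semi-orthogonal embedding. Composing these equivalences and reversing Step~1 gives the desired equivalence $\dbcoh{Z_{(\nabla_i)}} \cong \dbcoh{Z_{(\nabla_i')}}$.

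The main obstacle is constructing the common ambient GLSM. The translation $\Delta_i \mapsto \Delta_i - p_i + p_i'$ is an affine, not linear, symmetry and does not lift directly to the homogeneous coordinate ring; nor is it obvious a priori that the fans of $\P_\nabla$ and $\P_{\nabla'}$ share a common refinement compatible with the bundle data. To handle this I would work inside the Gorenstein cone formalism of Batyrev--Borisov, packaging both nef partitions as two transverse slicings of a single ``super''-Gorenstein cone in a slightly enlarged lattice, with the two slicings corresponding to the two choices of origin $p_i$ versus $p_i'$ in each $\Delta_i$. Once this combinatorial set-up is in place, the $r$ extra fiber directions attach uniformly to both slicings, and the identification of the two LG models as adjacent (or connected) GIT chambers of one master gauge theory becomes tractable. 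After that, Kn\"orrer periodicity and the window theorem are by now standard tools in the Ballard--Favero--Katzarkov toolkit and deliver the result.
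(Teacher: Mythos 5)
Your proposal follows essentially the same route as the paper: reduce via Isik--Shipman/Kn\"orrer periodicity to Landau--Ginzburg models on the total spaces $\op{tot}(\bigoplus_i \O(-E_i))$, realize both as GIT chambers of a single torus action on affine space built from the (dual) Cayley--Gorenstein cone, and conclude by toric VGIT wall-crossing equivalences (Herbst--Walcher/BFK/Halpern-Leistner), with the Calabi--Yau condition guaranteeing equivalences rather than semi-orthogonal embeddings. The only cosmetic difference is that in the paper no extra fiber variables are adjoined: the coordinates are indexed by all lattice points of the degree-one slice of the dual Cayley cone, so the two sets of fiber directions $e_i$ and $e_i'$ are already present and get exchanged between chambers (with a small additional argument matching the $R$-charge in the second chamber via an automorphism of $S_\nu\times\gm$), which is a technical refinement of, not a departure from, your plan.
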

Let $\gm$ denote the group of units of $\kappa$.  The main technique here is to compare the global quotient stacks
\[
Y := \left[\op{tot}\left(\bigoplus_i \mathcal{O}_{\P_\nabla}(-E_i) \right)/ \gm\right] \tand 
Y' = \left[\op{tot}\left(\bigoplus_i \mathcal{O}_{\P_{\nabla'}}(-E_i')\right) / \gm \right]
\]
 and establish them both as open substacks of $[\A^N /  S_\nu \times \gm]$ for a particular subgroup $S_\nu \subseteq \gm^N$.  Restricting $S_\nu$-invariant functions on $\A^N$ establishes a correspondence between functions on  $Y$ and $Y'$ whose critical loci are Calabi-Yau complete intersections in $\mathbb{P}_\nabla$ and $\mathbb{P}_{\nabla'}$. In our proof, we do not necessarily need genericity as in the birationality proof of \cite{Li13}, just that the sections $g$ and $g'$ give complete intersections.  Also, note that our proof does not prove anything about the birationality and uses different tools than that of Li.

\subsection{Plan of the Paper}
Here is a brief summary of how the paper is organized. 
In Section~\ref{sec: Background}, we give the necessary background on polytopes for the Batyrev-Borisov construction, define the multiple mirrors that we will prove are derived equivalent, and describe the Batyrev-Nill conjecture.
In Section~\ref{sec: categories}, we provide background on categories of singularities and in particular the theorems of Orlov, Isik, and Shipman which we will use.
 In Section~\ref{sec: algebraic}, we describe the recent literature on variations of GIT quotients for linear actions of abelian groups on affine space and its relationship with equivalences of categories of singularities.
 In Section~\ref{sec: toric interpretation}, we give a clearer context for Section~ \ref{sec: algebraic}  and provide details in the case of toric varieties. 
 In Section~\ref{sec: BN}, we look at nef partitions and Batyrev-Borisov mirrors, showing that the distinct mirrors that come from combinatorial ambiguity in the Batyrev-Borisov construction are derived equivalent, proving the Batyrev-Nill conjecture.

\vspace{2.5mm}
\noindent \textbf{Acknowledgments:}
We heartily thank Colin Diemer for suggesting that VGIT may relate to the double mirror picture and give special thanks to Charles Doran for input on this project from start to finish.
This project was also greatly aided by stimulating conversations and suggestions from many excellent mathematicians; Matthew Ballard, Ionut Ciocan-Fontanine, Ron Donagi, Bumsig Kim, Mark Gross, Daniel Halpern-Leistner, Umut Isik, Xenia de la Ossa, Helge Ruddat, and Vyacheslav Shokurov.  We also thank the referee for several useful comments and suggestions to improve the paper.

Furthermore, the first-named author is grateful to the Korean Institute for Advanced Study for their hospitality while this document was being prepared. The second-named author thanks the Pacific Institute for the Mathematical Sciences and NSERC for various forms of support.  Frequent visits to the University of Alberta greatly expedited the progress of this work.  

The first-named author is also indebted to the Natural Sciences and Engineering Research Council of Canada and Canada Research Chair Program for support provided by NSERC RGPIN 04596 and CRC TIER2 229953.
The second-named author acknowledges that this material is based upon work supported by the National Science Foundation under Award No.\ DMS-1401446 and the Engineering and Physical Sciences Research Council under Grant EP/N004922/1.

 \vspace{2.5mm}

\section{Preliminaries}
 \label{sec: Background}
\subsection{Lattices and Polytopes}

In this section, we provide a brief exposition on the mirror construction for Calabi-Yau complete intersections in toric Fano varieties by Batyrev and Borisov. We also provide a recap of the conjecture of Batyrev and Nill on the derived equivalence of toric ``double mirrors."  More detailed references for this construction include \cite{BB97, BN07, Li13}. 

Let $M$ be a lattice of rank $d$ and $N$ be its dual lattice with pairing 
$$
\langle , \rangle : M\times N \rightarrow \Z.
$$
Set $M_{\R}:= M\otimes \R$ and $N_{\R} = N \otimes \R$. We extend the pairing to $\langle , \rangle : M_{\R} \times N_{\R} \rightarrow \R$ in the natural way.  

\begin{definition}
A \newterm{polytope} is the convex hull of a finite set of vectors in $M_{\R}$.
\end{definition}

\begin{definition}
A \newterm{lattice polytope} is the convex hull in $M_{\R}$ of a finite set of vectors in $M$.
\end{definition}

\begin{definition}
We define the \newterm{dual polytope} $\Delta^\vee$ to a lattice polytope $\Delta$ in $M_{\R}$ to be 
$$
\Delta^\vee := \left\{ n \in N_{\R} \middle| \ \langle m, n\rangle \geq -1 \text{ for all } m \in\Delta\right\}\subseteq N_{\R}.
$$
\end{definition}

\begin{definition}
We say $\Delta$ is \newterm{reflexive} if the dual polytope $\Delta^\vee$ is a lattice polytope. If there exists an interior lattice point $m \in \Delta$ so that $(\Delta-m)^\vee$ is a lattice polytope, we say that $\Delta$ is \newterm{reflexive with respect to $m$}. We say the polytope $\Delta$ is \newterm{Gorenstein of index $r$} if $r\Delta$ contains an interior lattice point $m$ and the polytope $r\Delta -m$ is reflexive.
\end{definition}

We can extend the lattices $M$ and $N$ to lattices $\overline{M} = M \oplus \Z^r$ and $\overline{N} = N \oplus \Z^r$ and obtain a new pairing
$$
\langle, \rangle: \overline M \times \overline N \rightarrow \Z
$$
defined to be $\langle (m, a_i), (n, b_i) \rangle : = \langle m, n\rangle + \sum_i a_ib_i$.  Take $e_i$ to be the standard elementary basis for $\Z^r$ when added to $N$ to create the direct sum $\overline{N}$ and take its dual basis $e_i^*$ for $\Z^r$ when added to $M$ to create $\overline{M}$. Let $\sigma \subseteq \overline M _{\R}$ be a $(d+r)$-dimensional, strictly convex, polyhedral cone with vertex $0\in \overline M$. We define the dual cone $\sigma^\vee$ to the cone $\sigma$ to be
$$
\sigma^\vee : = \left\{ n \in \overline N_{\R} \middle| \ \langle m, n\rangle \geq 0 \text{ for all } m\in \sigma\right\}.
$$
The dual cone $\sigma^\vee$ is also a $(d+r)$-dimensional, strictly convex, rational, polyhedral cone with vertex $0\in \overline N$.

\begin{definition}
A $(d+r)$-dimensional rational polyhedral cone $\sigma$ is a \newterm{Gorenstein cone} if it is generated by finitely many lattice points that are contained in an affine hyperplane
$$
\left\{ m\in \overline M_{\R} \middle| \ \langle m, n\rangle = 1\right\}
$$
for some $n \in \overline N$. The point $n$ is uniquely determined. We call this element the degree element and we denote it by $\deg^\vee$.  We define the \newterm{$k^{\text{th}}$ slice} of $\sigma$ to be 
$$
\sigma_{(k)} :=\left\{ m \in \overline M_{\R} \middle| \ \langle m, \deg^\vee\rangle = k\right\}
$$
\end{definition}

Given a Gorenstein cone $\sigma$, the lattice polytope $\sigma_{(1)}$ is called the \newterm{support} of $\sigma$.

\begin{definition}
A Gorenstein cone $\sigma$ is called a \newterm{reflexive Gorenstein cone} if the dual cone $\sigma^\vee$ is also a Gorenstein cone. We denote the degree element relative to $\sigma^\vee$ to be $\deg \in M$.  We define the \newterm{index} of a reflexive Gorenstein cone to be the integer $\langle \deg, \deg^\vee\rangle$.
\end{definition}

We have the following proposition to relate many of these definitions.

\begin{proposition}[Proposition 2.11 of \cite{BB97}]
Let $\sigma$ be a Gorenstein cone $\sigma \subseteq M_{\R}$. The following are equivalent:
\begin{enumerate}[a)]
\item the cone $\sigma$ is a reflexive Gorenstein cone of index $r$;
\item the $r^{\text{th}}$ slice $\sigma_{(r)}$ of the cone $\sigma$ is a reflexive polytope; 
\item the polytope $\sigma_{(1)}$ is a Gorenstein polytope of index $r$.
\end{enumerate}
In this case, $\sigma_{(r)}$ is a reflexive polytope with unique interior point $\deg$.
\label{prop: BB}
\end{proposition}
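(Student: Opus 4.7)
The plan is to observe that $(b)$ and $(c)$ coincide and then prove $(a) \Leftrightarrow (b)$ directly. Since the primitive generators of the Gorenstein cone $\sigma$ all lie on the affine hyperplane $\{m : \langle m, \deg^\vee\rangle = 1\}$, scaling gives $\sigma_{(r)} = r \cdot \sigma_{(1)}$ as polytopes. With this identification, both $(b)$ and $(c)$ become the single assertion that $r \sigma_{(1)}$ possesses an interior lattice point $m$ with $r\sigma_{(1)} - m$ reflexive, so they express the same condition.

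For $(a) \Rightarrow (b)$ I would take $\delta := \deg \in M$, the degree element of $\sigma^\vee$. Because $\sigma^\vee$ is Gorenstein, $\langle \deg, n_i\rangle = 1 > 0$ for every primitive ray generator $n_i$ of $\sigma^\vee$, placing $\deg$ in the interior of $\sigma$; together with $\langle \deg, \deg^\vee\rangle = r$ this makes $\deg$ an interior lattice point of $\sigma_{(r)}$. Let $H_0 := \{m : \langle m, \deg^\vee\rangle = 0\}$, whose dual lattice is $N/\Z \deg^\vee$. The translate $\sigma_{(r)} - \deg$ is cut out in $H_0$ by the shifted inequalities $\langle m, \bar n_i\rangle \geq -1$, where $\bar n_i$ denotes the image of $n_i$ in the quotient. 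The dual of $\sigma_{(r)} - \deg$ is therefore the convex hull of the $\bar n_i$, a lattice polytope, which establishes reflexivity.

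For $(b) \Rightarrow (a)$, suppose $\delta \in \sigma_{(r)}$ is an interior lattice point such that $\sigma_{(r)} - \delta$ is reflexive. The shifted facet inequalities read $\langle m, \bar n_i\rangle \geq -\langle \delta, n_i\rangle$ in $H_0$, and reflexivity requires each primitive inward normal in $N/\Z \deg^\vee$ to sit at lattice-distance $1$. The heart of the proof, and the principal obstacle, is to verify that $\bar n_i$ is itself primitive in $N/\Z\deg^\vee$, so that $\langle \delta, n_i\rangle = 1$ follows automatically. I would establish this by viewing the facet $F_i := \{m \in \sigma : \langle m, n_i\rangle = 0\}$ as a Gorenstein cone in its own right: its generators $v$ (the primitive generators of $\sigma$ lying on $F_i$) satisfy $\langle v, \deg^\vee\rangle = 1$, and if the image of $\deg^\vee$ in the dual lattice $N/\Z n_i$ of $F_i$ were $k \nu$ with $\nu$ primitive and $k \geq 2$, then some $v$ would pair with $\nu$ to $1/k \notin \Z$, a contradiction. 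The primitivity of this image in $N/\Z n_i$ is equivalent to $\Z n_i + \Z \deg^\vee$ being saturated in $N$, hence, by the symmetric formulation, to the desired primitivity of $\bar n_i$. Once $\langle \delta, n_i\rangle = 1$ is established for every $i$, $\sigma^\vee$ is Gorenstein with degree element $\delta$, and $\langle \delta, \deg^\vee\rangle = r$ computes the index, yielding $(a)$; the uniqueness of $\delta = \deg$ then follows from the standard fact that a reflexive polytope contains exactly one interior lattice point.
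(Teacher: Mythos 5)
The paper itself offers no argument here: the proposition is imported verbatim as Proposition 2.11 of \cite{BB97}, so there is nothing internal to compare against. Judged on its own, your proof is correct. The scaling identity $\sigma_{(r)} = r\,\sigma_{(1)}$ does make (b) and (c) literally the same condition, provided (b) is read, as intended, as ``reflexive with respect to some interior lattice point'' (the slice lies in an affine hyperplane missing the origin, so the paper's unqualified definition of reflexive does not apply verbatim). Your (a)$\Rightarrow$(b) direction is the standard bipolar computation: $(\sigma_{(r)}-\deg)^\vee = \operatorname{Conv}(\{0\}\cup\{\bar n_i\})$, a lattice polytope in $\overline N/\Z\deg^\vee$. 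Most importantly, you correctly isolate the only nontrivial point of (b)$\Rightarrow$(a), namely that the images $\bar n_i$ of the facet normals are primitive in $\overline N/\Z\deg^\vee$; without this, reflexivity of the slice only gives $\langle\delta,n_i\rangle = k_i$ with $k_i$ the divisibility of $\bar n_i$. Your derivation of primitivity works: a facet $F_i$ of $\sigma$ contains at least one degree-one generator $v$ (a facet of a finitely generated cone is generated by the generators lying on it), the pairing of $v$ with $\deg^\vee$ equals $1$ and descends to $\overline N/\Z n_i$, forcing the image of $\deg^\vee$ there to be primitive; and primitivity of the image of $\deg^\vee$ modulo $n_i$, saturation of $\Z n_i+\Z\deg^\vee$, and primitivity of $\bar n_i$ modulo $\deg^\vee$ are indeed equivalent (all say $\overline N/(\Z n_i+\Z\deg^\vee)$ is torsion free, using that $n_i$ and $\deg^\vee$ are primitive and non-proportional). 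Two facts you use silently and could state explicitly: $\deg^\vee$ is primitive (it pairs to $1$ with the generators of $\sigma$), which is what identifies $\overline N/\Z\deg^\vee$ as the dual lattice of $H_0\cap\overline M$; and the facets of $\sigma_{(r)}$ are exactly the level-$r$ slices of the facets of $\sigma$, so every inequality $\langle m,\bar n_i\rangle\geq -\langle\delta,n_i\rangle$ is genuinely a facet inequality. Neither is a gap.
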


\subsection{Nef Partitions, Toric double mirrors and the Batyrev-Nill Conjecture}

\begin{definition}
Let $\Delta \subseteq M_{\R}$ be a $d$-dimensional reflexive polytope with respect to $m$.  A \newterm{nef partition} of $\Delta$ of length $r$ is a decomposition of the polytope $\Delta$ into $r$ polytopes $\Delta_1, \ldots, \Delta_r \subseteq M_{\R}$ and a choice of lattice points $p_i \in \Delta_i$ so that the sum of the lattice points $p_i$ is $m$ and the Minkowski sum $\Delta_1 + \ldots + \Delta_r$ is equal to the polytope $\Delta$.
\end{definition}

The \newterm{Cayley polytope} of length $r$ is 
$$
\Delta_1 * \ldots * \Delta_r := \operatorname{Conv}((\Delta_1+ e_1^*), \ldots, (\Delta_r + e_r^*)) \subseteq \overline M_{\R}
$$
The cone
$$
\R_{\geq 0} (\Delta_1 * \ldots * \Delta_r)  = \R_{\geq 0} (\Delta_1 +e_1^*)+ \ldots + \R_{\geq 0} (\Delta_r + e_r^*) 
$$
is the \newterm{Cayley cone} associated to the nef partition $\{\Delta_i\}$.

If the affine space generated by the polytopes $\Delta_i$ is the vector space $M_{\R}$, then the Cayley cone is a Gorenstein cone of dimension $d+r$ with support $ \Delta_1 * \ldots * \Delta_r$.

\begin{proposition}[Proposition 2.3 of \cite{BN07}]
Let $\sigma \subseteq \overline M_{\R}$ be a Gorenstein cone of dimension $d+r$. Then the following statements are equivalent:
\begin{enumerate}[a)]
\item the cone $\sigma$ is a Cayley cone associated to $r$ lattice polytopes;
\item the support $\sigma_{(1)}$ is a Cayley polytope of length $r$;
\item There is a lattice projection $\overline M \rightarrow \Z^r$, which maps $\sigma_{(1)}$ surjectively to the polytope $\operatorname{Conv}(e_1^*, \ldots, e_r^*)$; 
\item There are nonzero elements $e_1', \ldots, e_r' \in \sigma^\vee \cap \overline N$ such that $\sum_i e_i' = \deg^\vee$.
\end{enumerate}
Moreover, the lattice vectors $e_i^*$ form a part of a basis of $\overline N$ and the Cayley structure of $\sigma_{(1)}$ is uniquely determined by $t$ polytopes 
$$
\Delta_i : = \left\{ x \in \sigma_{(1)} \middle| \ \langle x, e_j'\rangle = 0 \text{ for $j \neq i$}\right\} - e_i^*
$$
for each $i$ where $e_i$ are part of a dual basis. These polytopes have the property that $\langle \Delta_i, e_i'\rangle = 1$.
\label{prop: Cayley comparison}
\end{proposition}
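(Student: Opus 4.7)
The plan is to prove the equivalences via the cycle (a) $\Leftrightarrow$ (b), (b) $\Rightarrow$ (c) $\Rightarrow$ (d) $\Rightarrow$ (b), and to extract the explicit formula for the $\Delta_i$ as a byproduct of the last implication. The equivalence (a) $\Leftrightarrow$ (b) is essentially tautological: by definition a Cayley cone is the $\R_{\geq 0}$-span of a Cayley polytope, and for a Gorenstein cone the defining affine hyperplane together with Proposition~\ref{prop: BB} identifies this support with the height-one slice $\sigma_{(1)}$; thus $\sigma$ admits a Cayley cone structure of length $r$ precisely when $\sigma_{(1)}$ is a Cayley polytope of length $r$.

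For (b) $\Rightarrow$ (c), I would write $\sigma_{(1)} = \operatorname{Conv}((\Delta_1 + e_1^*), \ldots, (\Delta_r + e_r^*))$ inside $\overline M_{\R} = M_{\R} \oplus \R^r$; the second-factor projection $\pi: \overline M \twoheadrightarrow \Z^r$ collapses each translated slice $\Delta_i + e_i^*$ onto the single point $e_i^*$, so $\pi(\sigma_{(1)}) = \operatorname{Conv}(e_1^*, \ldots, e_r^*)$ surjectively. For (c) $\Rightarrow$ (d), dualize: a lattice surjection $\pi$ corresponds to a split injection $\pi^\vee: \Z^r \hookrightarrow \overline N$, and I set $e_i' := \pi^\vee(e_i)$. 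Since $\pi(\sigma)$ lies in the nonnegative orthant of $\R^r$, each pairing $\langle -, e_i'\rangle$ is nonnegative on $\sigma$, placing $e_i'$ in $\sigma^\vee \cap \overline N$; and since $\sum_i \langle m, e_i'\rangle = 1$ for every $m \in \sigma_{(1)}$, the sum $\sum_i e_i'$ must coincide with the unique degree element $\deg^\vee$.

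For (d) $\Rightarrow$ (b) together with the explicit formula, I would use the given $e_i'$ to define $\pi: \overline M \to \Z^r$ by $m \mapsto (\langle m, e_i'\rangle)_i$. At any lattice point $m \in \sigma_{(1)}$, the nonnegative integers $\langle m, e_i'\rangle$ sum to $\langle m, \deg^\vee\rangle = 1$, so exactly one equals $1$ and the rest vanish; hence the lattice points of $\sigma_{(1)}$ partition into the translated pieces $\Delta_i + e_i^*$ with $\Delta_i := \{x \in \sigma_{(1)} \mid \langle x, e_j'\rangle = 0 \text{ for } j \neq i\} - e_i^*$. Taking convex hulls (and using that $\sigma_{(1)}$ is a lattice polytope) yields the Cayley decomposition (b) and the stated formula for the $\Delta_i$. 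The hard part will be the ``moreover'' claim that the $e_i'$ extend to a $\Z$-basis of $\overline N$, equivalently that $\pi$ is surjective onto the lattice $\Z^r$, which in turn amounts to each $\Delta_i$ being nonempty as a lattice polytope. I would attack this by noting that each $e_i'$ is a nonzero element of $\sigma^\vee$, so the locus $\{x \in \sigma_{(1)} \mid \langle x, e_i'\rangle = 1\}$ is a nonempty proper face, and by invoking the general fact that faces of a Gorenstein cone are themselves Gorenstein and therefore spanned by their lattice points at height one; this produces for each $i$ a lattice point $m_i \in \sigma_{(1)}$ with $\pi(m_i) = e_i^*$, completing both the lattice surjectivity and the nonemptiness of $\Delta_i$.
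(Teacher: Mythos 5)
This proposition is imported verbatim from Batyrev--Nill (Proposition 2.3 of \cite{BN07}); the paper gives no proof of its own, so there is nothing internal to compare your argument against. On its own merits, your cycle (a) $\Leftrightarrow$ (b), (b) $\Rightarrow$ (c) $\Rightarrow$ (d) $\Rightarrow$ (b) is the standard route and is essentially correct: the dualization step (c) $\Rightarrow$ (d) and the partition of the lattice points of $\sigma_{(1)}$ in (d) $\Rightarrow$ (b) are fine as written. The one place that needs tightening is exactly the spot you flag as the hard part: the nonemptiness of $\{x \in \sigma_{(1)} \mid \langle x, e_i'\rangle = 1\}$ does not follow from $e_i' \neq 0$ alone, since a priori the maximum of $\langle -, e_i'\rangle$ on $\sigma_{(1)}$ could be strictly between $0$ and $1$. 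You should argue instead: because $\sigma$ is Gorenstein it is generated by lattice points at height one, so $\sigma_{(1)}$ is a lattice polytope whose vertices are lattice points; on any lattice point the values $\langle -, e_j'\rangle$ are nonnegative integers summing to $1$, hence each is $0$ or $1$; since $\sigma$ has full dimension $d+r$ and $e_i' \neq 0$, the functional $e_i'$ is not identically zero on $\sigma_{(1)}$, so its maximum (attained at a vertex) is a positive integer $\leq 1$, hence equal to $1$. The maximizing vertex is then already the lattice point $m_i$ with $\pi(m_i) = e_i^*$, which gives both the surjectivity of $\pi$ (so the $e_i'$ extend to a basis of $\overline N$ and a splitting defining the $e_i^*$) and the nonemptiness of each $\Delta_i$; your appeal to the fact that faces of a Gorenstein cone are generated by their height-one lattice points is correct but becomes redundant once this is observed. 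With that repair the extraction of the formula for $\Delta_i$ and the Cayley decomposition of $\sigma_{(1)}$ goes through as you describe.
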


We can now define a class of Cayley cones that is maximal in some sense:

\begin{definition}[Definition 2.4 of \cite{BN07}]
A reflexive Gorenstein cone $\sigma$ is called \newterm{completely split} if it can be realized as a Cayley cone associated to $r$ lattice polytopes where $r$ is the index of the cone, i.e., $\langle \deg, \deg^\vee\rangle = r$.
\end{definition}

We can tell which polytopes $\Delta$ yield a nef partition. Consider the following definition:

\begin{definition}[Definition 2.7 of \cite{BN07}]
Let $\Delta \subseteq M_{\R}$ be a $d$-dimensional lattice polytope. A simplex $S$ spanned by $r$ affinely independent lattice points in $\Delta$ is called a \newterm{special $(r-1)$-simplex} of $\Delta$ if each facet of $\Delta$ contains exactly $r-1$ vertices of $S$.
\end{definition}
We now have the following equivalent conditions 

\begin{proposition}[Proposition 3.6 of \cite{BN07}]
 Let $\Delta$ be a polytope which is reflexive with respect to $m$ and $\Delta_1, \ldots, \Delta_r$ be lattice polytopes such that 
 \[
 \Delta_1 + \ldots + \Delta_r = \Delta.
 \]
The following are equivalent:
\begin{enumerate}[a)]
\item The dual of the Cayley cone is a completely split reflexive Gorenstein cone of index $r$;
\item The Cayley polytope $\Delta_1 *\cdots * \Delta_r$ is a Gorenstein polytope of index $r$ containing a special $(r-1)$-simplex; 
\item 
There exists lattice points $p_i \in \Delta_i$ such that $\sum p_i = m$, i.e., $ \Delta_1 + \ldots + \Delta_r = \Delta$ is a nef-partition.
\end{enumerate}
\label{prop: BN nef}
\end{proposition}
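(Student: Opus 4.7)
The plan is to work inside the Cayley cone $\sigma := \mathbb{R}_{\geq 0}(\Delta_1 * \cdots * \Delta_r) \subseteq \overline{M}_{\mathbb{R}}$, whose unit slice is the Cayley polytope $\sigma_{(1)} = \Delta_1 * \cdots * \Delta_r$ and whose canonical degree functional is $\deg^\vee_\sigma = e_1 + \cdots + e_r \in \overline{N}$. Proposition~\ref{prop: BB} identifies ``Gorenstein polytope of index $r$'' on $\sigma_{(1)}$ with ``reflexive Gorenstein cone of index $r$'' on $\sigma$, and under this hypothesis the unique interior lattice point of $\sigma_{(r)}$ is forced to be $\deg_\sigma = m + \sum e_i^*$, where $m$ is the interior point of $\Delta$. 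This disposes of the Gorenstein halves of both (a) and (b) in one stroke.

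The cleanest middle step is (a) $\Leftrightarrow$ (c). By criterion~(d) of Proposition~\ref{prop: Cayley comparison} applied to $\sigma^\vee$, complete splitness is equivalent to the existence of nonzero lattice points $v_1, \ldots, v_r \in \sigma \cap \overline{M}$ with $\sum v_i = \deg_\sigma$. The constraints $\langle v_i, \deg^\vee_\sigma\rangle \in \mathbb{Z}_{\geq 1}$ with total $\langle \deg_\sigma, \deg^\vee_\sigma\rangle = r$ force each pairing to equal $1$; hence each $v_i$ lies in $\sigma_{(1)}$, and its nonnegative integer $\mathbb{Z}^r$-coordinates sum to $1$, so (after reindexing) $v_i = p_i + e_i^*$ with $p_i \in \Delta_i \cap M$. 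Collecting $M$-parts gives $\sum p_i = m$, i.e., the nef-partition condition (c); conversely any nef partition manufactures such $v_i$.

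For (a), (c) $\Leftrightarrow$ (b), the simplex $S = \operatorname{Conv}(v_1, \ldots, v_r)$ is automatically affinely independent because its $\mathbb{Z}^r$-projections are. To show $S$ is special I would describe each facet of $\sigma_{(1)}$ as the intersection with $\sigma^\vee$ along a primitive ray $u = (w, c_1, \ldots, c_r)$, where $w \in N$ is a vertex of $\Delta^\vee$ and $c_i := -\min_{\Delta_i}\langle \cdot, w\rangle \in \mathbb{Z}_{\geq 0}$; the reflexivity identity $\sum c_i = -\min_\Delta\langle\cdot, w\rangle = 1$ forces exactly one $c_i$ (say $c_{j^*}$) to equal $1$ and the rest to vanish. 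Since $v_i \in$ (this facet) iff $\langle p_i, w\rangle = -c_i$, a vertex fails to lie on it iff $\langle p_i, w\rangle \geq -c_i + 1$, which is $\geq 0$ when $i = j^*$ and $\geq 1$ when $i \neq j^*$. If two or more indices failed, the integer bound $\sum\langle p_i, w\rangle \geq 1$ would give $\langle m, w\rangle \geq 1$, contradicting (after WLOG translation to $m = 0$) the interiority $\langle m, w\rangle < 1$; interiority also forces at least one failure. Exactly one $p_i$ fails, giving $r - 1$ vertices of $S$ on each facet. The converse direction extracts $v_i = p_i + e_i^*$ from any special simplex by the $\mathbb{Z}^r$-projection argument and uses the facet equations to recover $\sum p_i = m$.

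The crux—and the step I expect to take the most care—is the integer-gap argument in the last paragraph: the integrality of $c_i$ and of $\langle p_i, w\rangle$, combined with the partition identity $\sum c_i = 1$ and the strict interiority $\langle m, w\rangle < 1$, is precisely what rules out two simultaneous failures and upgrades a mere affinely independent $(r-1)$-simplex to a special one. Once this combinatorial lemma is isolated, Propositions~\ref{prop: BB} and~\ref{prop: Cayley comparison} supply all the bookkeeping needed to transfer between $\sigma$, its slices, and its Cayley/splitness data, completing the three-way equivalence.
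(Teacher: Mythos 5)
A preliminary remark: the paper itself supplies no proof of this statement --- it is quoted as Proposition 3.6 of \cite{BN07} --- so there is nothing in-paper to compare your route against; I can only assess your sketch on its own terms, and it has a genuine gap.

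The gap sits at the only substantive implication, namely (c) $\Rightarrow$ (a)/(b). Statement (a) asserts two things about $\sigma^{\vee}$: that it is a \emph{reflexive Gorenstein cone of index $r$}, and that it is completely split. Your middle paragraph correctly shows that, \emph{given} the reflexive Gorenstein property (so that $\deg_{\sigma}$ exists, equals $(m,\mathbf{1})$, and pairs to $r$ with $\deg^{\vee}_{\sigma}$), complete splitness via Proposition~\ref{prop: Cayley comparison}(d) is equivalent to the existence of the $p_i$. But (c) carries no Gorenstein hypothesis, so your converse clause ``any nef partition manufactures such $v_i$'' only produces the splitting elements; it never establishes that the Cayley cone is reflexive Gorenstein of index $r$, which is exactly the nontrivial content of the proposition. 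Concretely, one must prove that every primitive generator $(w,c)$ of an extreme ray of $\sigma^{\vee}$ pairs to $1$ with $(m, e_1^{*}+\cdots+e_r^{*})$ --- equivalently, that the dual polytopes $\nabla_i$ are lattice polytopes (Borisov's duality, quoted in the paper as Propositions 3.13 and 3.18 of \cite{BN07}). Your facet analysis in the third paragraph \emph{assumes} precisely this: describing every facet normal of $\sigma_{(1)}$ as $(w,c)$ with $w$ a vertex of $\Delta^{\vee}$ and $\sum_i c_i = -\min_{\Delta}\langle \cdot, w\rangle = 1$ is not a consequence of reflexivity of $\Delta$ alone; a priori the $N$-component of a facet normal is just some primitive $w\in N$, and the identity $\langle m, w\rangle + \sum_i c_i = 1$ \emph{is} the Gorenstein condition to be proved, not an input. (Two smaller inaccuracies in the same paragraph: the facets with normals $(0,e_j)$, which have $w=0$ and are not of your stated form, are omitted, though they are easy; and even granting the Gorenstein property, $w$ need only be a boundary lattice point of $\Delta^{\vee}$, not a vertex. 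Also, your assertion $\deg_{\sigma}=(m,\mathbf{1})$ needs the short check that $(m,\mathbf{1})$ is interior to $\sigma$, using $m\in\operatorname{int}\Delta$ and additivity of minima over Minkowski sums --- routine, but not free.) As written, your argument yields (a) $\Leftrightarrow$ (b) and (a) $\Rightarrow$ (c), but the return implication from (c) is missing; to close the cycle you must either invoke or reprove the Batyrev--Borisov nef-partition duality.
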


Let
\[
p_i \in \Delta_i, \Delta_1 + \ldots + \Delta_r = \Delta
\]
be a nef partition.  For any $j$, we define the polytope 
$$
\nabla_j = \left\{ n \in N_{\R} \middle| \langle m, n \rangle \geq - \delta_{ij} \text{ for all $m \in \Delta_i - p_i$ for all $i$}\right\}.
$$
It can be shown that for all $i$, the polytope $\nabla_i$ is a lattice polytope. We can define a polytope $\nabla$ that is the Minkowski sum of all the $\nabla_i$:
$$
\nabla := \nabla_1+\ldots +\nabla_r.
$$

\begin{proposition}[Propositions 3.13 and 3.18 of \cite{BN07}]
The $r$ polytopes $\nabla_j$ define a nef-partition. The polytope $\op{Conv}(\Delta_1, \ldots, \Delta_r)$ is dual to $\nabla$. Analogously, the polytope $ \op{Conv}(\nabla_1, \ldots, \nabla_r) $ is dual to $\Delta - m$.
\end{proposition}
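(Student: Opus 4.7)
The plan is to reduce the proposition to the reflexive Gorenstein cone duality established in Section~\ref{sec: Background}, using the Cayley cone associated to the nef partition as the bridge. After translating so that $\sum p_i = m = 0$, I would form the Cayley cone
$$C := \R_{\geq 0}\cdot\op{Conv}\bigl((\Delta_1 - p_1 + e_1^*) \cup \cdots \cup (\Delta_r - p_r + e_r^*)\bigr) \subseteq \overline M_\R.$$
By Proposition~\ref{prop: BN nef}, the hypothesis that $\Delta = \Delta_1 + \cdots + \Delta_r$ is a nef partition is equivalent to $C^\vee$ being a completely split reflexive Gorenstein cone of index $r$. In particular $C^\vee$ itself admits a Cayley structure over $r$ lattice polytopes, and by Proposition~\ref{prop: BB} the cone $C$ is reflexive Gorenstein as well.

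The next step is to identify this dual Cayley structure explicitly. Computing directly,
$$C^\vee = \bigl\{(n, b_1, \ldots, b_r) \in \overline N_\R : \langle m, n\rangle + b_i \geq 0 \text{ for all } m \in \Delta_i - p_i,\ \text{all } i\bigr\}.$$
I would apply Proposition~\ref{prop: Cayley comparison}(d) to $C^\vee$ with the vectors $e_i^* \in C \cap \overline M$; these lie in $C$ as vertices of the Cayley polytope and sum to the degree element $\deg_C$, which is the unique interior lattice point $(0, 1, \ldots, 1)$ of the $r$-th slice of $C$. The explicit formula in Proposition~\ref{prop: Cayley comparison} then produces the Cayley constituent indexed by $i$ as $\{n \in N_\R : (n, e_i) \in C^\vee\}$, which upon unpacking the inequalities above is precisely $\nabla_i$. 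Hence each $\nabla_i$ is a lattice polytope. A second application of Proposition~\ref{prop: BN nef} to $C^\vee$, whose Cayley data is $\{\nabla_j\}$, then shows that $\nabla_1 + \cdots + \nabla_r = \nabla$ defines a nef partition.

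For the duality $\op{Conv}(\Delta_1 - p_1, \ldots, \Delta_r - p_r)^\vee = \nabla$, one direction is a direct computation: for $n = \sum_j n_j \in \nabla$ with $n_j \in \nabla_j$, every $m \in \Delta_i - p_i$ satisfies
$$\langle m, n\rangle = \sum_j \langle m, n_j\rangle \geq \sum_j (-\delta_{ij}) = -1,$$
so $n$ lies in the polar dual. For the reverse inclusion I would invoke the reflexive Gorenstein cone duality between $C$ and $C^\vee$: the polytope $\op{Conv}(\Delta_i - p_i)$ is obtained as the projection of the first slice $C_{(1)}$ onto $M_\R$, and the polar dual of this image agrees with the projection of the $r$-th slice $(C^\vee)_{(r)}$ onto $N_\R$, which is the Minkowski sum $\nabla$. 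Translating back by $p_i$ and using $\sum p_i = m$ yields $\op{Conv}(\Delta_1, \ldots, \Delta_r)^\vee = \nabla$, and the symmetric statement $\op{Conv}(\nabla_1, \ldots, \nabla_r)^\vee = \Delta - m$ follows by exchanging the roles of $(M, \{\Delta_i\})$ and $(N, \{\nabla_j\})$.

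The main obstacle is the reverse inclusion in the duality. A naive approach would try to construct the Minkowski decomposition $n = \sum n_j$ directly from the bound $\langle m, n\rangle \geq -1$ on each $\Delta_i - p_i$, which is awkward to do by hand. The Cayley cone framework finesses this by encoding the decomposition as the passage between slices $(C^\vee)_{(1)}$ and $(C^\vee)_{(r)}$, where integrality and the lattice compatibility of the $\nabla_j$ are guaranteed a priori by the reflexive Gorenstein property of $C^\vee$ established via Proposition~\ref{prop: BN nef}.
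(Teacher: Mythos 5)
The paper itself gives no argument for this proposition: it is quoted verbatim from Propositions 3.13 and 3.18 of \cite{BN07}, so there is no internal proof to compare against and I can only assess your argument on its own terms. Your framework (the Cayley cone $C$, Proposition~\ref{prop: BN nef}, Proposition~\ref{prop: Cayley comparison}, Proposition~\ref{prop: BB}) is the right one, but two steps need repair. First, you assert without proof that $\sum_i e_i^* = (0,1,\ldots,1)$ is the degree element of $C^\vee$, i.e.\ the unique interior lattice point of $C_{(r)}$. This is the crux of the whole statement: it is exactly where reflexivity of $\Delta$ and the existence of the $p_i \in \Delta_i$ with $\sum_i p_i = m$ enter, and without it you cannot invoke Proposition~\ref{prop: Cayley comparison}(d) with the particular splitting $e_i' = e_i^*$ (a different splitting of $\deg$ gives different Cayley constituents --- that ambiguity is the very subject of this paper). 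The fix is short: since $0 \in \Delta_i - p_i$, each $e_i^*$ lies in $C$, so every facet normal $(n,b)$ of $C$ satisfies $b_i \geq 0$; if $\sum_i b_i = 0$ then $b=0$ and the nonzero functional $n$ would be nonnegative on $\Delta - m$, impossible because $0$ is interior to $\Delta - m$. Hence $\sum_i e_i^*$ pairs strictly positively with every facet normal, lies in the relative interior of $C_{(r)}$, and equals $\deg$ by Proposition~\ref{prop: BB}.

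Second --- and this is a step that would actually fail --- your reverse inclusion rests on the claim that the polar dual of the projection of $C_{(1)}$ agrees with the projection of $(C^\vee)_{(r)}$ onto $N_{\R}$, and that this projection is $\nabla$. Neither is true: $(C^\vee)_{(r)} = r\,(C^\vee)_{(1)}$, so its image in $N_{\R}$ is $r\,\op{Conv}(\nabla_1,\ldots,\nabla_r)$, which in general strictly contains $\nabla_1 + \cdots + \nabla_r$ (in the paper's Figure~\ref{fig: BN} example, $2\cdot(0,-1)=(0,-2)$ lies in the projected slice but not in $\nabla$). What you want is the section, not the projection: $\op{Conv}(\Delta_1-p_1,\ldots,\Delta_r-p_r)^\vee = \{\,n \in N_{\R} : (n,1,\ldots,1) \in C^\vee\,\}$, and once the first step gives $C^\vee = \R_{\geq 0}(\nabla_1 * \cdots * \nabla_r)$, writing $(n,1,\ldots,1) = \sum_i \lambda_i(n_i+e_i)$ with $n_i \in \nabla_i$, $\lambda_i \geq 0$ forces $\lambda_i = 1$ for all $i$, hence $n \in \nabla$. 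This correction also removes a mild circularity in your last step: Proposition~\ref{prop: BN nef} applied to the $\nabla_j$ presupposes that their Minkowski sum $\nabla$ is reflexive, which you have not established at that point. Instead, deduce reflexivity of $\nabla$ from the duality just proved ($\nabla^\vee = \op{Conv}(\Delta_i - p_i)$ is a lattice polytope, $\nabla$ is a lattice polytope as a Minkowski sum of the lattice polytopes $\nabla_j$, and $0$ is interior), after which the nef-partition claim is immediate from the definition using the lattice points $0 \in \nabla_j$.
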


We now consider the case where $\Delta$ contains two different special $(r-1)$-simplices $S$ and $S'$ and the dual polytope $\Delta^\vee$ contains a special $(r-1)$-simplex $T$.  The two special simplices $S$ and $S'$ define two collections of lattice points $p_i$ and $p_i'$ in $\Delta_i$ such that 
$$
0 = \sum_i p_i = \sum_i p_i',
$$
 by Proposition~\ref{prop: BN nef}. By taking the dual nef partitions $\nabla_i$ to $\Delta_i$ and $\nabla_i'$ to $\Delta_i' := \Delta_i -p_i + p_i'$, one may get very different polytopes. The corresponding generic complete intersection $Z_{\Delta_i}$ in $X_\Delta$ is isomorphic to some corresponding generic complete intersection $Z_{\Delta_i'}$ in $X_\Delta$. When it comes to the dual nef partitions and their generic complete intersections $Z_{(\nabla_i)}$ and $Z_{(\nabla_i')}$, we are not given such a nice correspondence. 
  
\begin{example}
We repeat Example 5.1 of \cite{BN07} for the convenience of the reader.  The relevant polytopes are pictured in Figure~\ref{fig: BN}. 

  \begin{center}
  \begin{figure}[h]
 \begin{tikzpicture}
   [scale=.5, vertex/.style={circle,draw=black!100,fill=black!100,thick, inner sep=0.5pt,minimum size=0.5mm}, cone/.style={->,very thick,>=stealth}]
   \filldraw[fill=black!20!white,draw=white!100]
     (-2.2,2.2) -- (2.2,2.2) -- (2.2,-2.2) -- (-2.2,-2.2) -- (-2.2,2.2);
  \draw[thick](0,0) -- (1,1);
    \draw[thick](0,0) -- (-1,1);
        \draw[thick](1,1) -- (-1,1);
          \draw[thick](0,0) -- (0,-1);
  \node at (1.4,1) {$\Delta_1$};
    \node at (1.4,-1) {$\Delta_2$};
  \foreach \x in {-2,-1,...,2}
   \foreach \y in {-2,-1,...,2}
   {
     \node[vertex] at (\x,\y) {};
   }
 \end{tikzpicture}
 \begin{tikzpicture}
   [scale=.5, vertex/.style={circle,draw=black!100,fill=black!100,thick, inner sep=0.5pt,minimum size=0.5mm}, cone/.style={->,very thick,>=stealth}]
   \filldraw[fill=black!20!white,draw=white!100]
     (-2.2,2.2) -- (2.2,2.2) -- (2.2,-2.2) -- (-2.2,-2.2) -- (-2.2,2.2);
          \draw[thick](0,-1) -- (1,0);
    \draw[thick](0,-1) -- (-1,0);
        \draw[thick](1,0) -- (-1,0);
                \draw[thick](0,0) -- (0,1);
  \node at (1.4,1) {$\Delta_1'$};
    \node at (1.4,-1) {$\Delta_2'$};
  \foreach \x in {-2,-1,...,2}
   \foreach \y in {-2,-1,...,2}
   {
     \node[vertex] at (\x,\y) {};
   }
 \end{tikzpicture}
 \begin{tikzpicture}
   [scale=.5, vertex/.style={circle,draw=black!100,fill=black!100,thick, inner sep=0.5pt,minimum size=0.5mm}, cone/.style={->,very thick,>=stealth}]
   \filldraw[fill=black!20!white,draw=white!100]
     (-2.2,2.2) -- (2.2,2.2) -- (2.2,-2.2) -- (-2.2,-2.2) -- (-2.2,2.2);
  \draw[thick](0,0) -- (1,1);
    \draw[thick](0,0) -- (-1,1);
        \draw[thick](1,1) -- (-1,1);
          \draw[thick](0,-1) -- (1,0);
    \draw[thick](0,-1) -- (-1,0);
        \draw[thick](1,0) -- (-1,0);
  \node at (1.4,1) {$\nabla_2$};
    \node at (1.4,-1) {$\nabla_1$};
  \foreach \x in {-2,-1,...,2}
   \foreach \y in {-2,-1,...,2}
   {
     \node[vertex] at (\x,\y) {};
   }
 \end{tikzpicture}
 \begin{tikzpicture}
   [scale=.5, vertex/.style={circle,draw=black!100,fill=black!100,thick, inner sep=0.5pt,minimum size=0.5mm}, cone/.style={->,very thick,>=stealth}]
   \filldraw[fill=black!20!white,draw=white!100]
     (-2.2,2.2) -- (2.2,2.2) -- (2.2,-2.2) -- (-2.2,-2.2) -- (-2.2,2.2);
  \draw[thick](1,0) -- (1,1);
    \draw[thick](-1,0) -- (1,0);
        \draw[thick](1,1) -- (-1,1);
        \draw[thick](-1,1) -- (-1,0);
          \draw[thick](0,0) -- (0,-1);
            \node at (1.4,1) {$\nabla_2'$};
    \node at (1.4,-1) {$\nabla_1'$};
  \foreach \x in {-2,-1,...,2}
   \foreach \y in {-2,-1,...,2}
   {
     \node[vertex] at (\x,\y) {};
   }
 \end{tikzpicture}
  \begin{tikzpicture}
   [scale=.5, vertex/.style={circle,draw=black!100,fill=black!100,thick, inner sep=0.5pt,minimum size=0.5mm}, cone/.style={->,very thick,>=stealth}]
   \filldraw[fill=black!20!white,draw=white!100]
     (-2.2,2.2) -- (2.2,2.2) -- (2.2,-2.2) -- (-2.2,-2.2) -- (-2.2,2.2);
  \draw[thick](0,-1) -- (2,1);
    \draw[thick](2,1) -- (-2,1);
        \draw[thick](-2,1) -- (0,-1);
    \node at (1.6,0) {$\nabla$};
  \foreach \x in {-2,-1,...,2}
   \foreach \y in {-2,-1,...,2}
   {
     \node[vertex] at (\x,\y) {};
   }
 \end{tikzpicture}
  \begin{tikzpicture}
   [scale=.5, vertex/.style={circle,draw=black!100,fill=black!100,thick, inner sep=0.5pt,minimum size=0.5mm}, cone/.style={->,very thick,>=stealth}]
   \filldraw[fill=black!20!white,draw=white!100]
     (-2.2,2.2) -- (2.2,2.2) -- (2.2,-2.2) -- (-2.2,-2.2) -- (-2.2,2.2);
  \draw[thick](-1,-1) -- (1,-1);
    \draw[thick](1,-1) -- (1,1);
        \draw[thick](1,1) -- (-1,1);
                \draw[thick](-1,1) -- (-1,-1);
    \node at (1.6,0) {$\nabla'$};
  \foreach \x in {-2,-1,...,2}
   \foreach \y in {-2,-1,...,2}
   {
     \node[vertex] at (\x,\y) {};
   }
 \end{tikzpicture}
 \caption{The polytopes $\Delta_1, \Delta_2, \Delta_1', \Delta_2', \nabla_1, \nabla_2, \nabla_1', \nabla_2', \nabla$, and $\nabla'$.} \label{fig: BN}
 \end{figure}
  \end{center}

We have, 
\[
X_{\nabla} = \mathbb P(1:1:2) \tand X_{\nabla'}  = \P^1 \times \P^1. 
\]
We label coordinates on $\mathbb P(1:1:2)$ by $a,b,c$ where $c$ has degree $2$ and coordinates on $\P^1 \times \P^1$ by $x_0,x_1, y_0, y_1$.  
The dual nef-partitions $\nabla_1, \nabla_2$ and $\nabla_1', \nabla_2'$ correspond to the vector bundles  $\O(2) \oplus \O(2)$ and $\O(2,1) \oplus \O(0,1)$.
The sections of these bundles are identified via the isomorphism
\[
\nabla_1 * \nabla_2 \cong \nabla_1' * \nabla_2'.
\]
We list the monomial section pairing in coordinates for this isomorphism in Figure~\ref{fig: ex mon list} where we label sections of the second summand of $\O(2)$ with a prime.
By inspection, the multiple mirrors are isomorphic in this case and there are two cases; single points and double points.  

\begin{center}
\begin{figure}[h]
\begin{tabular}{| c | c | }
\hline
$c$ & $y_0$ \\  \hline
$c'$ & $y_1$ \\ \hline
$a^2$ & $x_0^2y_0$ \\ \hline
$ab$ & $x_0x_1y_0$ \\ \hline
$b^2$ & $x_1^2y_0$ \\ \hline
$(a')^2$ & $x_0^2y_1$ \\ \hline
$a'b'$ & $x_0x_1y_1$ \\ \hline
$(b')^2$ & $x_1^2y_1$ \\ \hline
\end{tabular}
\caption{Monomials associated to lattice points in $\nabla_1 * \nabla_2 \cong \nabla_1' * \nabla_2'$. } \label{fig: ex mon list}
\end{figure}
\end{center}

\end{example}

Recently, Z. Li \cite{Li13} proved that the ambiguity of dual nef-partitions is rectified by birationality.
  \begin{theorem}[Theorem 4.6 of \cite{Li13}]
The generic complete intersection $Z_{(\nabla_i)}$ is birational to a complete intersection $Z_{(\nabla_i')}$.
 \end{theorem}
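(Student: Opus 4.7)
The plan is to exhibit both $Z_{(\nabla_i)}$ and $Z_{(\nabla_i')}$ as birational models of a common ``master'' Calabi--Yau obtained from the shared combinatorial data of the two dual nef partitions, and to deduce birational equivalence from a toric GIT wall-crossing argument.

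First I would observe that, although the polytopes $\nabla$ and $\nabla'$ differ in general, the two dual nef partitions share a common Cayley structure. Indeed, by Proposition~\ref{prop: Cayley comparison}, the Cayley polytope $\nabla_1 * \cdots * \nabla_r$ is determined by the Gorenstein cone together with a choice of splitting, and the translation $\Delta_i \mapsto \Delta_i - p_i + p_i'$ reindexes this splitting without altering the underlying cone. Consequently both $\bigsqcup_i (\nabla_i \cap N)$ and $\bigsqcup_i (\nabla_i' \cap N)$ can be identified with the ray set of a single Cayley cone, and the total spaces of monomials in the two Cox rings inject into a single master lattice.

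Next, I would build a master toric stack $\widetilde{\P}$ whose Cox coordinates are labelled by these common lattice points and whose grading torus $T$ parameterises all the lattice relations among them. Two distinct, suitably generic stability characters in the character lattice of $T$ should recover $\P_{\nabla}$ and $\P_{\nabla'}$ as the two GIT quotients. A generic section $g$ of $\bigoplus_i \mathcal{O}_{\P_\nabla}(E_i)$ and a generic section $g'$ of $\bigoplus_i \mathcal{O}_{\P_{\nabla'}}(E_i')$ then each pull back from a single master section $\widetilde g$ of an equivariant vector bundle on $\widetilde{\P}$, whose $T$-equivariant zero locus $\widetilde Z$ descends to $Z_{(\nabla_i)}$ over one chamber and to $Z_{(\nabla_i')}$ over the other.

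I would then apply toric VGIT: two non-empty GIT quotients of an irreducible variety by the same group with distinct linearisations are birational, since their semistable loci are Zariski-open with non-empty intersection on the stable locus. Applied to the $T$-invariant subvariety $\widetilde Z \subseteq \widetilde{\P}$, and using the genericity of $g$ and $g'$ to ensure that $\widetilde g$ can be chosen to cut out a transverse complete intersection on both semistable loci simultaneously, this yields a common dense open $U$ embedding into both $Z_{(\nabla_i)}$ and $Z_{(\nabla_i')}$, hence the desired birational map.

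The hardest step is the middle one: explicitly identifying the torus $T$ and the two stability chambers from the combinatorics of the nef partitions, and arranging that a single master section $\widetilde g$ is simultaneously generic for both chambers. One must also confirm that the intersection of the two semistable loci inside $\widetilde Z$ is non-empty and of the expected dimension, which reduces to a combinatorial check on the secondary fan of the shared Cayley cone and on the codimensions of the unstable strata in $\widetilde{\P}$.
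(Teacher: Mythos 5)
This statement is not proved in the paper at all: it is quoted as Theorem 4.6 of \cite{Li13}, whose proof goes through determinantal varieties, and the authors explicitly state that their own VGIT machinery ``does not prove anything about the birationality.'' Your proposal instead tries to get birationality out of exactly that VGIT picture, and it founders on a concrete point that the paper itself flags in Remark~\ref{rem: not birational}. The gap is in your middle step: there is no single $T$-invariant subvariety $\widetilde Z$ of the master stack whose two GIT quotients are $Z_{(\nabla_i)}$ and $Z_{(\nabla_i')}$. The two dual nef partitions do share the Cayley polytope $\nabla_1 * \cdots * \nabla_r \cong \nabla_1' * \cdots * \nabla_r'$, so the monomials (lattice points) are common, but the two splittings partition the \emph{same} set of monomials into $r$ defining equations in two genuinely different ways; the equations themselves change, not just the stability chamber. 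In the paper's Example 5.1 the eight monomials are grouped $4+4$ as sections of $\O(2)\oplus\O(2)$ on $\mathbb P(1:1:2)$ but $6+2$ as sections of $\O(2,1)\oplus\O(0,1)$ on $\P^1\times\P^1$, so no single master section $\widetilde g$ cuts out both complete intersections. Equivalently, in the honest master picture $[\A^{|\nu|}/S_\nu]$ with the common superpotential $w$, the loci $Z$ and $Z'$ sit on different irreducible components of the critical locus, $Z \subseteq Z(x_{e_1},\ldots,x_{e_r})$ and $Z' \subseteq Z(x_{e_1'},\ldots,x_{e_r'})$, and these coordinate subspaces can be precisely the unstable loci that are exchanged by the wall crossing. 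Hence the ``common dense open $U$'' you invoke is empty in general, and the principle that two GIT quotients of one irreducible variety are birational simply does not apply.

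This is why the derived equivalence and the birationality require different arguments: the VGIT comparison (Theorem~\ref{thm: HW}) relates the ambient Landau--Ginzburg models $[Z_p/S_\nu\times\gm]$ and $[Z_q/S_\nu\times\gm]$, and only after applying Isik--Shipman (Theorem~\ref{thm: isik-shipman}) on each side separately does one land on $\dbcoh{Z}$ and $\dbcoh{Z'}$; nothing in that chain produces a rational map between $Z$ and $Z'$. To actually prove the statement you would need Li's route (or an equivalent one): encode the section as a matrix pairing the $u$-variables against the grouped monomials, and show that both $Z_{(\nabla_i)}$ and $Z_{(\nabla_i')}$ are birational, for generic coefficients, to a common determinantal/incidence variety determined by the Cayley structure. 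Genericity is essential there, whereas your GIT framing cannot use it to repair the fact that the two subvarieties you want to compare are not open pieces of one another's closure.
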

 
 This answers affirmatively Question 5.2 of \cite{BN07}.  Batyrev and Nill further conjectured (Conjecture 5.3 of \cite{BN07}) that the derived categories of coherent sheaves on $Z_{(\nabla_i)}$ and $Z_{(\nabla_i')}$ are equivalent. Theorem~\ref{BNconjecturetheorem} proves this conjecture at the level of stacks.

\section{Categories of Singularities}
\label{sec: categories}
In this section, we discuss categories of singularities, which first appeared in \cite{Orl04}.
 For our purposes, it will be necessary to define them for a certain class of stacks, and recollect some basic aspects about them in this context.

Let $X$ be a variety and $G$ be an algebraic group acting on $X$.

\begin{definition}
An object of $\dbcoh{[X/G]}$ is called \newterm{perfect} if it is locally quasi-isomorphic to a bounded complex of vector bundles.  We denote the full subcategory of perfect objects by $\op{Perf}([X/G])$.  The Verdier quotient of $\dbcoh{[X/G]}$ by $\op{Perf}([X/G])$ is called the \newterm{category of singularities} and denoted by
\[
\dsing{[X/G]} := \dbcoh{[X/G]} / \op{Perf}([X/G]).
\]
\end{definition}

 Consider a variety $X$ with the action of an algebraic group $G$ and a $G$-equivariant vector bundle $\mathcal{E}$ on $X$.  Take the $G$-invariant section $s \in \op{H}^0(X, \mathcal E)$ and consider the zero locus $Z$ of $s$ in $X$.  The pairing with $s$ induces a global function on the total space of $\mathcal E^\vee$.  We denote by $Y$ the zero locus of the pairing with $s$.  We now consider the fiberwise dilation action of $\gm$ on $Y$, which helps us relate the category of singularities with the bounded derived category.
 \begin{theorem}[Isik, Shipman]
 Suppose the Koszul complex on $s$ is exact.  Then there is an equivalence of categories
 \[
 \dsing{[Y/(G \times \gm)]} \cong \dbcoh {[Z/G]}.
 \]
 \label{thm: isik-shipman}
 \end{theorem}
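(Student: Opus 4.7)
The plan is to factor the desired equivalence through a common intermediary: a triangulated category of $G$-equivariant, $\gm$-graded matrix factorizations on $V := \op{tot}(\mathcal{E}^\vee)$. The construction of the theorem is symmetric in the sense that neither side is obviously realized on $V$, so passing through matrix factorizations is the natural move.

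First, I would apply Orlov's theorem for equivariant Landau-Ginzburg models. The fiberwise dilation $\gm$-action on $V$ is compatible with the $G$-action and makes $w$ homogeneous of weight one. Orlov's theorem (in its $\gm$-graded form) then produces an equivalence
\[
\dsing{[Y/(G \times \gm)]} \cong \op{MF}^{G}(V, \gm, w),
\]
where the right-hand side denotes the appropriate triangulated category of $G$-equivariant, $\gm$-graded matrix factorizations of $w$ on $V$. At this point the problem reduces to comparing the matrix factorization side with $\dbcoh{[Z/G]}$.

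Second, I would unpack the matrix factorization category geometrically using the bundle structure $\pi : V \to X$. The pushforward gives $\pi_* \mathcal{O}_V = \op{Sym}_X(\mathcal{E})$ with its grading placing $\mathcal{E}$ in degree one, and the function $w$ corresponds to the degree-one element given by the section $s$. Under this identification, objects of $\op{MF}^{G}(V, \gm, w)$ become $\Z$-graded $\op{Sym}_X(\mathcal{E})$-modules equipped with a square-zero, degree-one differential -- equivalently, complexes of $G$-equivariant sheaves on $X$ whose differential is contraction against $s$.

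Finally, I would invoke the assumption that the Koszul complex on $s$ is exact. The Koszul resolution
\[
\cdots \to \wedge^2 \mathcal{E}^\vee \to \mathcal{E}^\vee \to \mathcal{O}_X \to \mathcal{O}_Z \to 0
\]
identifies $\mathcal{O}_Z$ with an explicit matrix factorization, and I would extend the assignment $\mathcal{F} \mapsto \mathcal{F} \otimes^{L}_{\mathcal{O}_X} K^\bullet(s)$ to a functor $\dbcoh{[Z/G]} \to \op{MF}^{G}(V, \gm, w)$. Full faithfulness follows from a $\op{Hom}$-computation that collapses using Koszul exactness, while essential surjectivity follows from generation of $\op{MF}^{G}(V, \gm, w)$ by the image of $\mathcal{O}_Z$ under the $G$-action. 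The main obstacle is this last step: both full faithfulness and essential surjectivity hinge delicately on Koszul exactness, and without that hypothesis the comparison functor neither collapses nor generates. Keeping track of the $G$-equivariance (as opposed to the $\gm$-equivariance, which is absorbed into the grading) is the other technical point, but this is handled formally once the non-equivariant comparison is in place.
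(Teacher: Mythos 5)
Your outline is essentially the route the paper itself takes: the paper proves this statement by citing Isik and Shipman (the latter in the language of $G$-equivariant graded factorizations as in BFK), and your passage through $G$-equivariant, $\gm$-graded matrix factorizations of $w$ on $\op{tot}(\mathcal E^\vee)$ with comparison functor $\mathcal F \mapsto \mathcal F \otimes K^\bullet(s)$ is precisely Shipman's argument --- the paper describes the very same functor as $G$-equivariant pullback to the fiber of $\op{tot}(\mathcal E^\vee)$ over $[Z/G]$ followed by pushforward. The one caveat is that the steps you defer (the graded Orlov-type equivalence, full faithfulness, and generation/essential surjectivity) constitute the actual content of the cited theorems of Orlov, Isik, and Shipman, so as in the paper they should be invoked from the literature (together with the equivariant extensions via Mirkovi\'c--Riche or BFK) rather than treated as routine verifications.
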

 
 \begin{proof}
 The statement without $G$ is exactly that in \cite{Isik}.  Using the equivalent category of factorizations which we avoid in this paper for technical simplicity, the result is also in \cite{Shipman}. 
 
 The proof in \cite{Isik} extends to $G$-equivariant sheaves, as the main technical tool used in \cite{MR11} extends to $G$-equivariant sheaves (see Section 4.3 of \cite{MR11}).

One can also extend the proof in \cite{Shipman} using the category of $G$-equivariant factorizations \cite{BFK11}.  Here, the equivalence is given by the $G$-equivariant pullback to the fiber of $\op{tot}(\mathcal E^\vee)$ over $[Z/G]$  followed by the $G$-equivariant pushforward to $\op{tot}(\mathcal E^\vee)$.
\end{proof}

\section{Torus Actions on Affine Space}
\label{sec: algebraic}

Consider an affine space $X:= \mathbb A^{n+r}$ with coordinates $x_i, u_j$ for $1 \leq i \leq n, 1 \leq j \leq r$.
Let $T = \gm^{n+r}$ be the open dense torus with the standard embedding and action on $X$.
Take $S \subseteq T$ to be a subgroup and let $\widetilde{S}$ be the connected component of the identity.

We now consider the possible GIT quotients for the action of $\widetilde{S}$ on $X$ \cite{MFK}.  To do so, we need an ample equivariant line bundle on $X$.  By necessity, we take the trivial bundle with some equivariant structure.  The choice of this equivariant structure amounts to a character of $\widetilde{S}$, i.e., an element $\chi \in \op{Hom}(\widetilde{S}, \gm)$ from which we obtain a line bundle $\O_\chi$ by pulling back the representation of $\widetilde{S}$ via the morphism of stacks $[X/\widetilde{S}] \to [\op{pt}/\widetilde{S}]$.  

One can create a nice pictoral understanding this choice.  First, extend the choice of $\chi$ by letting it be an arbitrary element of $\op{Hom}(\widetilde{S}, \gm) {\otimes_{\Z}} \Q$.  One can get an equivariant line bundle simply by rationalizing denominators.  By looking at the associate line bundle, each $\chi$ determines an open subset $U_\chi$ corresponding to the semi-stable locus of $X$ with respect to $\chi$.  Since this semi-stable locus only depends of the ray which $\chi$ lies on, no harm is done by introducing rational coefficients.  There is a fan $\Sigma_{\op{GKZ}}$ in $\op{Hom}(\widetilde{S}, \gm) {\otimes_{\Z}} {\Q}$ called the \newterm{GKZ-fan} (or \newterm{secondary fan}), where $U_\chi$ is constant on the interior of each cone in the fan.  The maximal cones of this fan are called \newterm{chambers} and the codimension 1 cones are called \newterm{walls}.  

Furthermore, given the action of $\widetilde{S}$ on $X$, there are finitely many chambers $\sigma_1, ..., \sigma_t$ in the fan $\Sigma_{\op{GKZ}}$.  To declutter notation, given any character $\chi_p$ in the interior of $\sigma_p$ we can consider the semi-stable points with respect to that character.  This yields an open subset in $X$ which we denote by $U_p$. Excellent and far more extensive references for this discussion of toric GIT are \cite{GKZ, CLS}.

\begin{definition}
Let $\times : \gm^{n+r} \to \gm$ be the multiplication map.  We say that $S$ satisfies the \newterm{quasi-Calabi-Yau condition} if $\times|_{\widetilde{S}} = \bf{1}$, i.e., the multiplication map restricted to $\widetilde{S}$ is the trivial homomorphism.
\end{definition}

\begin{definition}
 Let $G$ be a group acting on a space $X$ and let $f$ be a global function on $X$.  We say that $f$ is \newterm{semi-invariant} with respect to a character $\chi$ if, for any $g \in G$,
 \[
 f(g \cdot x) = \chi(g)f(x).
 \]
 Equivalently, this means that $f$ is a section of the equivariant line bundle $\O(\chi)$ on the global quotient stack $[X/G]$.
\end{definition}

\begin{remark}
Each variable $x_i$ is semi-invariant with respect to a unique character of $S$ which we can denote by $\op{deg}(x_i)$.   The quasi-Calabi-Yau condition is equivalent to
\begin{equation}
\sum \op{deg}(x_i)+ \sum \op{deg}(u_j)
\end{equation}
being torsion.  
\end{remark}

As an aside, the derived categories $\dbcoh{[U_p / S]}$ are all equivalent \cite{Kaw05}.  However, to suit our needs, namely to use Theorem \ref{thm: isik-shipman}, we will add an auxiliary $\gm$-action and an $S$-invariant function which is semi-invariant with respect to the projection character $\chi: S \times \gm \to \gm$.  
This auxiliary $\gm$-action  acts with weight $0$ on the $x_i$ for all $i$ and with weight $1$ on the $u_j$  for  all $j$.  Using a term from the string theory literature, this auxiliary action will be referred to as \newterm{$R$-charge}.

Also, notice the action of $S$ on $\op{Spec} \kappa[u_j]$ yields a character $\gamma_j$ of $S$. 
For each $i=1, \ldots, r$, let $f_i$ be a function in $\kappa[x_j]$ that is $S$-semi-invariant with respect to the character $\gamma_j^{-1}$. These functions $f_i$ determine a complete intersection in $\A^n$
 by defining the function  
\[
w := \sum_{j=1}^r u_j f_j.
\]
Notice that $w$ is $S$-invariant, i.e., homogeneous of degree 0 and homogeneous of degree $1$ with respect to the $R$-charge.  From a geometric perspective, this means $w$ is a section of the equivariant bundle $\O(\chi)$ where $\chi: S \times \gm \rightarrow \gm$ is the projection character, or equivalently, as mentioned above, $w$ is semi-invariant with respect to $\chi$.   We call $w$ the \newterm{superpotential}.
Let $Z$ denote the zero-locus of $w$ in $X$ and 
\[
Z_p := Z \cap U_p.
\]

\begin{theorem}[Herbst-Walcher]
If $S$ satisfies the quasi-Calabi-Yau condition, there is an equivalence of categories,
\[
\dsing{[Z_p/ S \times \gm]} \cong \dsing{[Z_q/ S \times \gm]}
\]
for all $1 \leq p, q \leq r$.
\label{thm: HW}
\end{theorem}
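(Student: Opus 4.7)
The plan is to reduce to a single wall crossing in the GKZ fan and apply a grade-restriction window argument. Since any two chambers can be joined by a sequence of adjacent chambers, it suffices to prove the equivalence when $\sigma_p$ and $\sigma_q$ share a codimension one wall $W$. For such a wall there is a one-parameter subgroup $\lambda : \gm \to \widetilde{S}$, unique up to sign and scaling, normal to $W$, and the locus destabilized as one crosses from $\sigma_p$ to $\sigma_q$ is the Bialynicki-Birula attracting stratum
\[
S_\lambda = \{x \in X \mid \lim_{t\to 0}\lambda(t)\cdot x \text{ exists in } X^\lambda\},
\]
for $\lambda$ or $-\lambda$, with the $\lambda$-fixed locus $X^\lambda$ contained in neither $U_p$ nor $U_q$.

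Next I would set up window subcategories in the style of Halpern-Leistner and Ballard-Favero-Katzarkov (\cite{BFK11}). For each integer $e$, let $\mathcal{W}_e \subseteq \dbcoh{[X/S\times\gm]}$ be the full subcategory whose objects, when pulled back to $X^\lambda$, have $\lambda$-weights in the half-open interval $[e,\, e+\mu)$, where
\[
\mu := \sum_{i \,:\, \langle \lambda, \op{deg}(x_i)\rangle < 0} |\langle \lambda, \op{deg}(x_i)\rangle| + \sum_{j \,:\, \langle \lambda, \gamma_j\rangle < 0} |\langle \lambda, \gamma_j\rangle|
\]
is the sum of the absolute values of the negative $\lambda$-weights on the coordinates $x_i, u_j$. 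The restriction functors $\mathcal{W}_e \to \dbcoh{[U_p/S\times\gm]}$ and $\mathcal{W}_e \to \dbcoh{[U_q/S\times\gm]}$ are both equivalences exactly when the discrepancy $\eta := \sum_i \langle\lambda, \op{deg}(x_i)\rangle + \sum_j\langle\lambda,\gamma_j\rangle$ vanishes. Since $\lambda$ factors through $\widetilde{S}$ and this discrepancy is the pairing of $\lambda$ with the total character $\sum \op{deg}(x_i) + \sum \gamma_j$, the quasi-Calabi-Yau hypothesis forces $\eta=0$, hence both restriction functors are equivalences, yielding a canonical equivalence $\dbcoh{[U_p/S\times\gm]} \cong \dbcoh{[U_q/S\times\gm]}$.

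To transfer this to categories of singularities, note that $w = \sum_j u_j f_j$ is a global section of the equivariant line bundle $\O(\chi)$ on $[X/(S\times\gm)]$, where $\chi$ is the projection to the auxiliary $\gm$. In particular, $w$ restricts to a section over both $U_p$ and $U_q$, so $Z_p$ and $Z_q$ are intrinsic data of the ambient equivalence. One can now either tensor with $\O_Z$ and pass to Verdier quotients by perfects, or, more cleanly, run the very same window argument in the category of equivariant matrix factorizations of $w$ on $X$ (the equivariant BFK theorem applies verbatim), obtaining an equivalence $\op{MF}(U_p, S\times\gm, w) \cong \op{MF}(U_q, S\times\gm, w)$. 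Applying Theorem~\ref{thm: isik-shipman} in the equivariant form then translates this into the desired equivalence
\[
\dsing{[Z_p/ S\times\gm]} \cong \dsing{[Z_q/ S\times\gm]}.
\]

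The main obstacle is the bookkeeping with weights across the wall: one must verify that the window subcategory $\mathcal{W}_e$ is truly preserved by tensoring with $\O(\chi)$ and is compatible with factorization data, and that the quasi-Calabi-Yau condition produces window width $\mu$ exactly matching the $\lambda$-weight deficit on \emph{both} $U_p$ and $U_q$ simultaneously, so that a single window serves both restriction functors. Once this is in place, the reduction from general $p,q$ to adjacent chambers is formal because each wall crossing produces its own $\lambda$, and the hypothesis on $\widetilde{S}$ is preserved under passage to any sub-one-parameter subgroup.
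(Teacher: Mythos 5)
Your overall strategy is the same one the paper relies on: reduce to a single wall of the GKZ fan, use grade-restriction windows for the equivariant factorization category of $w$, observe that the quasi-Calabi-Yau condition balances the wall so that a single window restricts to equivalences on both sides, and then chain adjacent chambers together. This is precisely the content of the results the paper cites (\cite{HW}, \cite{HL15}, \cite{BFK12}), so in substance your argument reconstructs the cited proof rather than offering a different route.

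Two points need repair. First, your final step invokes the wrong theorem: passing from $\op{MF}(U_p, S\times\gm, w)\cong \op{MF}(U_q, S\times\gm, w)$ to $\dsing{[Z_p/S\times\gm]}\cong\dsing{[Z_q/S\times\gm]}$ is the (equivariant) Orlov-type comparison between factorizations of $w$ and the singularity category of its zero locus (available in \cite{BFK11, BFK12}), not Theorem~\ref{thm: isik-shipman}; Isik--Shipman relates the singularity category of the hypersurface in the total space of $\mathcal E^\vee$ to $\dbcoh{[Z/G]}$ and is used only later, in the proof of the main theorem. Relatedly, the alternative you float of ``tensoring with $\O_Z$ and passing to Verdier quotients'' does not work as stated: an equivalence of ambient derived categories does not by itself induce one on the singularity categories of the hypersurfaces, which is exactly why the factorization (Landau--Ginzburg) formulation is needed. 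Second, the reduction to adjacent chambers is not purely formal: you must know that any two chambers can be joined through codimension-one walls, which holds because the GKZ (secondary) fan has convex support (Theorem 14.4.7 of \cite{CLS}), and you implicitly use that each such wall crossing is elementary in the sense of \cite{BFK12} (a single one-parameter subgroup and stratum), which is a cited proposition rather than a triviality. With those corrections and citations in place, your sketch matches the paper's proof.
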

\begin{proof}
This is essentially Theorem 3 of \cite{HW} stated in geometric as opposed to algebraic language.  For the geometric translation see \cite{HL15} Cor 4.8 and Prop 5.5.  One can also use Theorem 5.2.1 of \cite{BFK12}, this works for the equivalent language of factorization categories as in Theorem 3.5.2 of loc.\ cit.\  

Technically, these theorems only state the derived equivalence for characters in adjacent chambers of the GKZ fan (the fact that wall-crossing across two adjacent chambers in the GKZ fan is elementary is Proposition 5.1.4 of loc.\ cit.) However, the equivalences can be combined to get the statement for all chambers since the GKZ fan, is a fan with convex support by Theorem 14.4.7 of \cite{CLS}.
\end{proof}

\section{Toric Interpretation}
\label{sec: toric interpretation}

In this section, we provide concrete combinatorial descriptions of the results in Section~\ref{sec: algebraic} in the language of toric geometry.  More precisely, we review the fan description of toric vector bundles. This combinatorial description is used in Section~\ref{sec: BN} to outline the Batyrev-Borisov mirror construction \cite{Bat, Bor93, BB94, BB97}. We will in this chapter give a stacky treatment of these toric vector bundles, reviewing of various results of Chapters 14 and 15 of \cite{CLS} as needed. 

\subsection{The GKZ, Triangulations, and Torus Actions}

Following \S 15.2 of \cite{CLS}, take $\nu= (v_1, \ldots, v_t)$ to be a collection of nonzero points in $N$ lying in an affine hyperplane. Denote by $Q_\nu$ the polytope that is the convex hull of the finite set $\nu$. A \newterm{triangulation} $\mathcal{T}$ of $\nu$ is a collection of simplices so that:
\begin{enumerate}[i.]
\item Each simplex in $\mathcal{T}$ has codimension 1 in $N_{\R}$ with all of its vertices in $\nu$.
\item The intersection of any two simplices in $\mathcal{T}$ are a face of both simplices.
\item The union of the simplices in $\mathcal{T}$ is the convex hull $Q_{\nu}$.
\end{enumerate}

There is a correspondence between the set of all triangulations of $\nu$ and the set of all simplicial fans whose support is $\operatorname{Cone}(\nu)$ with each of its 1-rays being generated by $v_i$ for some $i$. 

We want to restrict our attention to a particular subset of triangulations. Given nonnegative weights $\omega = (w_1, \ldots, w_t)$, we get the cone
$$
C_{\mathbf{\nu,\omega}} = \operatorname{Cone}((v_1,w_1), \ldots, (v_t, w_t)) \subseteq N_{\R} \times \R.
$$

The lower hull of $C_{\mathbf{\nu,\omega}}$ is the collection of all cones that that are facets of $C_{\mathbf{\nu,\omega}}$ whose inward pointing normal has a positive last coordinate.

Taking the cones in the lower hull and applying the projection $N_{\R} \times \R \rightarrow N_{\R}$ gives a collection of cones in $N_{\R}$. Let $\Sigma_{\omega}$ be the fan in $N_{\R}$ consisting of all these cones and their proper faces. The fan $\Sigma_{\omega}$ has support $|\Sigma_{\omega}|=C_\nu$ and $\Sigma_{\omega}(1) \subseteq \left\{ \operatorname{Cone}(v_i) \middle| \ v_i \in \nu\right\}$.

\begin{definition}
A triangulation $\mathcal{T}$ of $\nu$ is \newterm{regular} if there are weights $\omega$ such that $\Sigma_\omega$ is simplicial and $\mathcal{T} = \Sigma_\omega \cap Q_\nu$.
\end{definition}

\begin{definition}
A toric variety $X_\Sigma$ is \newterm{semiprojective} if it is projective over an affine and has a torus fixed point. 
\end{definition}

We can move between triangulations and actions on affine space as follows.
Consider $ \nu = (v_1, ..., v_n) \subseteq N$ where $N$ is a lattice of dimension $d$. 
 We get a right exact sequence
\begin{align}
\label{eq: f}
M & \overset{f_\nu}{\longrightarrow} \Z^n \overset{\pi}{\longrightarrow} \op{coker}(f_\nu) \to 0 \\
m & \mapsto \sum_{i=1}^t \langle v_i, m \rangle e_i.  \notag
\end{align}
Applying $\op{Hom}(-, \gm)$, we get a left exact sequence
\[
0 \longrightarrow \op{Hom}(\op{coker}(f_\nu), \gm) \overset{\widehat{\pi}}{\longrightarrow} \gm^{n} \overset{\widehat{f_{\nu}}}{\longrightarrow} \gm^d
\]
We set 
\[
S_\nu := \op{Hom}(\op{coker}(f_\nu), \gm)
\]
The inclusion $S_\nu \to \gm^{n}$ gives the setup of Section~\ref{sec: algebraic} with $S_\nu$ acting on $\A^{n}$ via this inclusion.

Similarly, starting with a subgroup $S$ of $\gm^{n}$
\[
0 \longrightarrow S  \overset{i_S}{\longrightarrow} \gm^{n} \overset{p}{\longrightarrow} \op{Coker }(i_S) \to 0
\]
We may apply $\op{Hom}(-, \gm)$ to get
\[
 \op{Hom}(\op{Coker }(i_S), \gm)  \overset{\widehat{p}}{\longrightarrow} \Z^{n} \overset{\widehat{i_S}}{\longrightarrow} \op{Hom}(S, \gm) \to 0
\]
So we may set $\nu_i(S)$ to be the element of  $\op{Hom}(\op{Coker }(i_S), \gm)^{\vee}$ given by the composition of $\widehat{i_S}$ with the projection of $\Z^n$ onto its $i^{\op{th}}$ factor and define
\[
\nu(S) := (\nu_1(S), ...., \nu_n(S)).
\]

\begin{proposition}
We have $S_{\nu(S)} = S$.  Furthermore, if $f_\nu$ is injective then $\nu(S_\nu) =\nu$ .
\end{proposition}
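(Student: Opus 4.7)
The plan is to use the exact contravariant duality between finitely generated abelian groups and diagonalizable algebraic $\kappa$-groups given by $\op{Hom}(-, \gm)$; this duality sends $\gm^n$ to $\Z^n$, is its own inverse, and (crucially) is exact because $\op{Ext}^1(A, \gm)$ vanishes for any finitely generated abelian group $A$. Both assertions then reduce to tracking how the two constructions $\nu \mapsto S_\nu$ and $S \mapsto \nu(S)$ transpose one another under this duality.

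For the first assertion, I would begin by verifying directly that $f_{\nu(S)} = \widehat{p}$. Since $\nu_i(S) = \pi_i \circ \widehat{p}$ by construction, where $\pi_i \colon \Z^n \to \Z$ is the $i^{\text{th}}$ projection, one computes
\[
f_{\nu(S)}(m) \;=\; \sum_{i=1}^n \langle \nu_i(S), m \rangle\, e_i \;=\; \sum_{i=1}^n \pi_i(\widehat{p}(m))\, e_i \;=\; \widehat{p}(m).
\]
Comparing cokernels, the second displayed exact sequence of the setup gives $\op{coker}(\widehat{p}) = \op{Hom}(S, \gm)$, and therefore
\[
S_{\nu(S)} \;=\; \op{Hom}(\op{coker}(f_{\nu(S)}), \gm) \;=\; \op{Hom}(\op{Hom}(S, \gm), \gm) \;=\; S
\]
by double duality.

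For the second assertion, I would dualize the defining exact sequence for $S_\nu$,
\[
0 \to M \overset{f_\nu}{\longrightarrow} \Z^n \overset{\pi}{\longrightarrow} \op{coker}(f_\nu) \to 0,
\]
to obtain
\[
0 \to S_\nu \overset{i_{S_\nu}}{\longrightarrow} \gm^n \overset{\widehat{f_\nu}}{\longrightarrow} \op{Hom}(M, \gm) \to 0.
\]
This is exactly where the injectivity of $f_\nu$ is used: it is what makes $\widehat{f_\nu}$ surjective, so $\op{Coker}(i_{S_\nu}) \cong \op{Hom}(M, \gm) \cong \gm^d$ and $\op{Hom}(\op{Coker}(i_{S_\nu}), \gm) = M$. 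Under these identifications, the map $\widehat{p}$ appearing in the definition of $\nu(S_\nu)$ becomes $f_\nu$ itself, and hence
\[
\nu_i(S_\nu)(m) \;=\; (\pi_i \circ f_\nu)(m) \;=\; \langle v_i, m \rangle,
\]
which identifies $\nu_i(S_\nu)$ with $v_i$ regarded as an element of $N = \op{Hom}(M, \Z)$.

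The argument is essentially a bookkeeping exercise in Pontryagin-style duality, so there is no real obstacle. The one delicate point---and the reason the hypothesis on $f_\nu$ appears in the second half---is that without injectivity, $\widehat{f_\nu}$ has image strictly smaller than $\gm^d$, so $\op{Hom}(\op{Coker}(i_{S_\nu}), \gm)$ becomes the quotient $M / \ker(f_\nu)$ rather than $M$ itself, and one only recovers the $v_i$ modulo their common kernel. This matches the geometric fact that a non-injective $f_\nu$ is precisely the failure of the $v_i$ to span $N \otimes \Q$.
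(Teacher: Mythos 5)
Your proof is correct and takes essentially the same route as the paper's: you identify $f_{\nu(S)}$ with $\widehat{p}$ and conclude $S_{\nu(S)}=S$ by double duality $S=\op{Hom}(\op{Hom}(S,\gm),\gm)$, and for the second claim you use injectivity of $f_\nu$ to get surjectivity of $\widehat{f_\nu}$, identify $\op{Coker}(i_{S_\nu})$ with $\gm^d$ and $\widehat{p}$ with $f_\nu$, recovering $\nu_i(S_\nu)=v_i$, exactly as in the paper. The additional bookkeeping you include (exactness of $\op{Hom}(-,\gm)$ via vanishing of $\op{Ext}^1$, and the remark about what fails without injectivity) is sound and only makes explicit what the paper leaves implicit.
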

\begin{proof}
Starting with $S$ we set $N =  \op{Hom}(\op{Coker }(i_S), \gm)^\vee$ so that $M =  \op{Hom}(\op{Coker }(i_S), \gm)_{\op{free}}$. The map $f_{\nu(S)}$ is nothing more than the map $\widehat{p}$.  Hence, the first statement follows from the fact that $S = \op{Hom}(\op{Hom}(S, \gm), \gm)$.

Starting with $\nu$, we have $S = \op{Hom}(\op{coker}(f_\nu), \gm)$ and  $i_S = \widehat{\pi}$.  Since $f_\nu$ is injective,  $\widehat{f_\nu}$ is surjective and hence $\gm^d = \op{Coker }i_{S_\nu}$.  Therefore, $\widehat{p} = f_\nu$ and 
\begin{align*}
\nu_i(S) & = \widehat{p}^\vee(e_i) \\
&  = f_\nu^\vee(e_i) \\
& = \nu_i.
\end{align*} \end{proof}

Finally, consider a fan $\Sigma \subseteq N_{\R}$ with $n$ rays.
We can associate a new fan
\[
\op{Cox}(\Sigma) := \{ \op{Cone}(e_\rho | \ \rho \in \sigma) | \ \sigma \in \Sigma \} \subseteq \R^n = \R^{\Sigma(1)}.
\]
Enumerating the rays, this fan is a subfan of the standard fan for $\A^n$:
\[
\Sigma_n := \{ \op{Cone} (e_i | \ i \in I) | \ I \subseteq \{1, ..., n \} \}.
\]
 Hence $X_{\op{Cox}(\Sigma)}$ is an open subset of $\A^{n}$.

\begin{definition}
We call $X_{\op{Cox}(\Sigma)}$ the \newterm{Cox open set} associated to $\Sigma$.  We define the \newterm{Cox stack} associated to $\Sigma$ to be
\[
\mathcal{X}_{\Sigma}:= [X_{\op{Cox}(\Sigma)}/S_{\Sigma(1)} ].
\]
If $\Delta \subseteq M$ is a polyhedron then we take $\mathcal{X}_{\Delta} = \mathcal{X}_{\Sigma}$ where $\Sigma$ is the normal fan to $\Delta$.
\end{definition}

\begin{theorem}
If $\Sigma$ is simplicial, then the coarse moduli space of $\mathcal{X}_{\Sigma}$ is $X_\Sigma$.  When $\Sigma$ is smooth (or equivalently $X_\Sigma$ is smooth) $\mathcal{X}_{\Sigma} \cong X_\Sigma$.
\label{thm: stack realization}
\end{theorem}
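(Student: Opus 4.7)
The plan is to reduce this statement to Cox's quotient construction of a toric variety from its fan, as developed in Chapter 5 of \cite{CLS}. The essential observation is that $S_{\Sigma(1)}$ is precisely the group $G$ appearing in Cox's construction. To see this, assume (as is harmless for this local question) that $\Sigma(1)$ spans $N_\R$, so that $f_\nu : M \to \Z^{\Sigma(1)}$ is injective and its cokernel is $\op{Cl}(X_\Sigma)$. Applying $\op{Hom}(-,\gm)$ to the sequence (\ref{eq: f}) identifies $S_{\Sigma(1)}$ with $\op{Hom}(\op{Cl}(X_\Sigma),\gm)$. Moreover, by construction the Cox open subset $X_{\op{Cox}(\Sigma)} \subseteq \A^{\Sigma(1)}$ is the complement of the zero locus of the irrelevant ideal $B(\Sigma)$.

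Next, I would invoke Cox's Theorem (Theorem 5.1.11 of \cite{CLS}): $X_\Sigma$ is the good categorical quotient of $X_{\op{Cox}(\Sigma)}$ by $S_{\Sigma(1)}$, and when $\Sigma$ is simplicial this quotient is in fact a geometric quotient and every point of $X_{\op{Cox}(\Sigma)}$ has finite stabilizer under $S_{\Sigma(1)}$. A global quotient stack $[U/G]$ by a flat action with finite stabilizers is Deligne--Mumford with coarse moduli space the geometric quotient $U/G$. Applied to $\mathcal{X}_\Sigma = [X_{\op{Cox}(\Sigma)}/S_{\Sigma(1)}]$, this gives the coarse moduli statement.

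For the smooth case, the extra input is the standard characterization (Theorem 3.1.19 of \cite{CLS}) that $X_\Sigma$ is smooth if and only if, for each cone $\sigma \in \Sigma$, the primitive generators of its rays extend to a $\Z$-basis of $N$. A short diagram chase in the sequence defining $S_{\Sigma(1)}$ then shows that this condition is equivalent to the stabilizer of every point of $X_{\op{Cox}(\Sigma)}$ being trivial, i.e., to freeness of the $S_{\Sigma(1)}$-action. When the action is free, the quotient stack is represented by its geometric quotient, yielding $\mathcal{X}_\Sigma \cong X_\Sigma$.

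The only mildly delicate point I anticipate is the translation between the stack-theoretic formulation used here and Cox's scheme-theoretic one: one must identify the stabilizer of a point of $X_{\op{Cox}(\Sigma)}$ lying in the torus-invariant affine chart $U_\sigma$ with the finite group $N_\sigma/\Z\{v_\rho : \rho \in \sigma(1)\}$ (where $N_\sigma$ is the sublattice generated by $\sigma \cap N$), and check that this vanishes precisely in the smooth case. Everything else is bookkeeping between the conventions of \cite{CLS} and ours, and the heavy lifting has been done in Cox's theorem.
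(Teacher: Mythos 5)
Your proposal is correct and follows essentially the same route as the paper: the paper's proof is simply a citation of Proposition 5.1.9 and Theorem 5.1.11 of \cite{CLS} (Cox's quotient construction), which is exactly the machinery you invoke, with the identification of $S_{\Sigma(1)}$ with $\op{Hom}(\op{Cl}(X_\Sigma),\gm)$ and the simplicial/smooth dichotomy handled as in those results. Your additional remarks on stabilizers and coarse moduli spaces are just an unpacking of that citation.
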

\begin{proof}
This a combination of Proposition 5.1.9 and Theorem 5.1.11 in \cite{CLS}.
\end{proof}

If $\mathcal T$ is a triangulation of $\nu$ then we can form a similar stack.  We will see below that this stack is isomorphic as long as $\nu$ spans the lattice $N$.
\begin{definition}
Let $\mathcal T$ be a triangulation of $\nu$.  We define the \newterm{Cox stack} associated to $\mathcal T$ to be
\[
\mathcal X_{\mathcal T} := [X_{\op{Cox}(\Sigma)} \times \gm^{|\nu \backslash \Sigma(1)|} /S_\nu ]
\]
where $\Sigma$ is the fan associated to $\mathcal T$.
\end{definition}

\begin{proposition}
Let $\mu \subseteq \nu$ be finite sets in $N$ with the same convex hull.  Let $\mathcal T_\mu$ be a triangulation of $\mu$ and $\mathcal T_\nu$ be the same triangulation regarded as a triangulation of $\nu$. 
Assume that $\mu$ spans $N_{\R}$.  Then, there is an isomorphism of stacks
\[
\mathcal X_{\mathcal T_\mu} \cong \mathcal X_{\mathcal T_\nu}.
\]
\label{prop: same triangulations}
\end{proposition}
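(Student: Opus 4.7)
The plan is to exhibit both stacks as quotients built on the same Cox open set and then absorb the extra torus factors appearing in $\mathcal{X}_{\mathcal{T}_\nu}$ into its group action by a torsor argument. Writing $\Sigma$ for the common fan of the two triangulations and setting $Y := X_{\op{Cox}(\Sigma)} \times \gm^{|\mu \setminus \Sigma(1)|}$, one has
\[
\mathcal{X}_{\mathcal{T}_\mu} = [Y / S_\mu], \qquad \mathcal{X}_{\mathcal{T}_\nu} = [Y \times \gm^{|\nu \setminus \mu|} / S_\nu],
\]
so the problem reduces to comparing the $S_\mu$- and $S_\nu$-quotients, where $S_\nu$ acts on $Y$ through the projection $\pi_1 : S_\nu \subseteq \gm^{|\nu|} \twoheadrightarrow \gm^{|\mu|}$ induced by the splitting $\Z^{|\nu|} = \Z^{|\mu|} \oplus \Z^{|\nu \setminus \mu|}$.

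The central step is to produce a short exact sequence of algebraic groups
\[
1 \longrightarrow S_\mu \longrightarrow S_\nu \longrightarrow \gm^{|\nu \setminus \mu|} \longrightarrow 1
\]
in which $S_\mu \hookrightarrow S_\nu$ sends the extra $|\nu \setminus \mu|$ coordinates to $1$. Because $\mu$ spans $N_\R$, the maps $f_\mu$ and $f_\nu$ of \eqref{eq: f} are injective, and a diagram chase with the above splitting yields the short exact sequence
\[
0 \longrightarrow \Z^{|\nu \setminus \mu|} \longrightarrow \op{coker}(f_\nu) \longrightarrow \op{coker}(f_\mu) \longrightarrow 0.
\]
Applying $\op{Hom}(-, \gm)$, which is exact on abelian groups because $\gm$ is divisible, produces the desired sequence.

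With this sequence in hand, the $S_\nu$-action on $\gm^{|\nu \setminus \mu|}$ is translation through the surjection $\pi_2 : S_\nu \twoheadrightarrow \gm^{|\nu \setminus \mu|}$, which is transitive with stabilizer of $1$ equal to $\ker(\pi_2) = S_\mu$. Hence $\gm^{|\nu \setminus \mu|} \cong S_\nu / S_\mu$ as $S_\nu$-spaces. The standard identification of quotient stacks
\[
[Y \times (G/H) / G] \cong [Y / H]
\]
for a subgroup $H \subseteq G$ of algebraic groups acting on a scheme $Y$ (verified by checking that the $H$-equivariant map $y \mapsto (y, eH)$ induces an equivalence of the associated groupoid presentations), applied with $G = S_\nu$ and $H = S_\mu$, then yields the isomorphism $\mathcal{X}_{\mathcal{T}_\nu} \cong \mathcal{X}_{\mathcal{T}_\mu}$.

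The main obstacle is the careful construction of the short exact sequence relating $S_\mu$ to $S_\nu$, together with the verification that under the natural inclusion the $S_\mu$-action on $Y$ coincides with the restriction of the $S_\nu$-action through $\pi_1$; once these compatibilities are in place, the rest of the argument is a formal manipulation of quotient stacks.
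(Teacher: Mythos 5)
Your proposal is correct and follows essentially the same route as the paper: both arguments rest on the same snake-lemma diagram produced by the splitting $\Z^{|\nu|}=\Z^{|\mu|}\oplus\Z^{|\nu\setminus\mu|}$ (using injectivity of $f_\mu$ from the spanning hypothesis), the resulting short exact sequence $0\to\Z^{|\nu\setminus\mu|}\to\op{coker}(f_\nu)\to\op{coker}(f_\mu)\to 0$, and exactness of $\op{Hom}(-,\gm)$. The only difference is presentational: the paper performs the final reduction on the $\op{coker}(f_\nu)$-graded coordinate algebra (setting the unit coordinates $x_w=1$ and quotienting the grading group), whereas you package the same reduction as the sequence $1\to S_\mu\to S_\nu\to\gm^{|\nu\setminus\mu|}\to 1$ together with the standard identification $[Y\times (G/H)/G]\cong[Y/H]$, and the compatibility you flag (that $S_\mu\hookrightarrow S_\nu$ restricts to the standard $S_\mu$-action on $Y$) is indeed the routine duality check one needs.
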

\begin{proof}
Since $\mu$ spans $N_{\R}$, the map $f_\mu$ is injective.  Thus, we have the following diagram determined by the snake lemma:
\[
\begin{CD}
0 @>>> 0 @>>> \Z^{|\nu \setminus \mu|} @= \Z^{|\nu \setminus \mu|}  @>>> 0\\
@. @VVV @VVV @VVV @.\\
0 @>>> M @>f_\nu>> \Z^{|\nu|} @>>> \op{coker}(f_\nu) @>>> 0\\
@. @| @VVV @VVV @.\\
0 @>>> M @>f_\mu>> \Z^{|\mu|} @>>> \op{coker}(f_\mu) @>>> 0.
\end{CD}
\]
By definition, applying the exact functor $\op{Hom}( - , \gm)$ induces an action of $S_\nu$ on $\A^{|\nu|}$ and $S_\mu$ on $\A^{|\mu|}$.   

We claim there is an isomorphism of stacks
\[
[\A^{|\mu|} \times \gm^{|\nu \setminus \mu|} / S_\nu] \cong [\A^{|\mu|} / S_\mu]
\]
which restricts to the desired isomorphism 
\[
\mathcal X_{\mathcal T_\mu} \cong \mathcal X_{\mathcal T_\nu}
\]
simply by localizing.

This isomorphism comes from reducing $[\A^{|\mu|} \times \gm^{|\nu \setminus \mu|} / S_\nu] $ by  a free action of $\gm^{|\nu \setminus \mu|}$.  We can, dually, characterize this as a reduction of the $\op{coker}(f_\nu)$-graded algebra,
\[
\kappa[x_v | \ v \in \mu ] \otimes_{\kappa} \kappa[x_w, x_w^{-1} | \ w \in \nu \setminus \mu],
\]
to the $\op{coker}(f_\mu)$-graded algebra,
\[
 \kappa[x_v | \ v \in \mu ].
\]
Indeed, since the $x_w$ are units, this reduction comes from setting $x_w =1$ and quotienting $\op{coker}(f_\nu)$ by the subgroup generated by the degrees of $x_w$ for all $w \in \nu \setminus \mu$.  From the diagram above, this quotient is $\op{coker}(f_\mu)$.  So we obtain the desired isomorphism of graded algebras and the result follows.

\end{proof}

\begin{corollary}
Let $\Sigma$ be the fan associated to  a triangulation $\mathcal T_\mu$ of $\mu$ such that $\mu = \Sigma(1)$. Let $\nu$ be a finite set so that the convex hull of $\nu$ is equal to that of the convex hull of $\mu$ and let $\mathcal{T}_\nu$ be the same triangulation as above but as a triangulation of $\nu$ instead of $\mu$.  Assume that $X_\Sigma$ has no torus factors or, equivalently, $\mu$ spans $N_{\R}$.  Then, there is an isomorphism of stacks
\[
\mathcal X_{\Sigma} \cong \mathcal X_{\mathcal T_\nu}.
\]
\label{cor: reduce triangulation}
\end{corollary}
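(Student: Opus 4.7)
The plan is to recognize this corollary as an essentially immediate consequence of Proposition~\ref{prop: same triangulations} together with an unwinding of definitions. First I would verify that the hypotheses of Proposition~\ref{prop: same triangulations} are met for the pair $\mu \subseteq \nu$: we need $\mu$ and $\nu$ to have the same convex hull (given) and $\mu$ to span $N_{\R}$. The latter follows from the standard toric fact (e.g. \cite{CLS}, Chapter~3) that $X_\Sigma$ has no torus factors if and only if the primitive ray generators $\Sigma(1) = \mu$ span $N_{\R}$, which is exactly the hypothesis provided.

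Applying Proposition~\ref{prop: same triangulations} then yields an isomorphism of stacks
\[
\mathcal X_{\mathcal T_\mu} \cong \mathcal X_{\mathcal T_\nu}.
\]
The second step is to identify $\mathcal X_{\mathcal T_\mu}$ with $\mathcal X_\Sigma$. By definition,
\[
\mathcal X_{\mathcal T_\mu} = [X_{\op{Cox}(\Sigma)} \times \gm^{|\mu \setminus \Sigma(1)|} / S_\mu].
\]
Since by assumption $\mu = \Sigma(1)$, the set $\mu \setminus \Sigma(1)$ is empty, so the auxiliary torus factor disappears, and $S_\mu = S_{\Sigma(1)}$. Therefore
\[
\mathcal X_{\mathcal T_\mu} = [X_{\op{Cox}(\Sigma)} / S_{\Sigma(1)}] = \mathcal X_{\Sigma}.
\]
Chaining this identification with the isomorphism from Proposition~\ref{prop: same triangulations} gives $\mathcal X_\Sigma \cong \mathcal X_{\mathcal T_\nu}$, as desired.

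There is no real obstacle here; the content of the corollary is entirely carried by the previous proposition, and the only thing to check is the translation of ``no torus factors'' into the spanning hypothesis on $\mu$. The upshot is a clean statement that lets us work with Cox stacks of fans and Cox stacks of triangulations of arbitrary point sets (with the same convex hull) interchangeably, which will be the crucial flexibility needed in Section~\ref{sec: BN} when comparing the combinatorial data arising from different nef partitions.
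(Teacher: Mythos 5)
Your proof is correct and follows the same route as the paper: invoke the equivalence between ``no torus factors'' and $\mu$ spanning $N_{\R}$ (Proposition 3.3.9 of \cite{CLS}), then apply Proposition~\ref{prop: same triangulations} in the special case $\mu = \Sigma(1)$. Your explicit unwinding of $\mathcal X_{\mathcal T_\mu} = [X_{\op{Cox}(\Sigma)}/S_{\Sigma(1)}] = \mathcal X_\Sigma$ just makes visible what the paper leaves implicit.
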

\begin{proof}
The fact that  $\mu$ spans $N_{\R}$ is equivalent to $X_\Sigma$ having no torus factors is Proposition 3.3.9 of \cite{CLS}.

The result is then the special case of Proposition~\ref{prop: same triangulations} where $\mu =  \Sigma(1)$.
\end{proof}

\begin{theorem}
Let $\nu$ be a finite subset of $N$ which spans $N_{\R}$.  There is a bijection between chambers of the GKZ fan for the action of $S_\nu$ on $\A^{|\nu|}$ and regular triangulations of $\nu$ which takes a chamber $\sigma_p = (\Sigma_p, \emptyset)$ to a triangulation $\T_p = \Sigma_p \cap Q_\nu$ such that for a generic character $\chi \in \sigma_p$,  
\[
[(\A^{|\nu|})^{\op{ss}}(\chi) / S_\nu]  \cong \mathcal X_{\Sigma_p}.
\]   
\label{thm: GKZ chamber bijection}
\end{theorem}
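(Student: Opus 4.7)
The plan is to identify both sides with the same combinatorial data, namely the weight data $\omega=(w_1,\dots,w_{|\nu|})$ that arises from a character of $S_\nu$. Recall that a character $\chi$ of $S_\nu$ is an element of $\op{Hom}(S_\nu,\gm)_{\Q} = \op{coker}(f_\nu)_{\Q}$, so after lifting via $\pi$ (from the sequence \eqref{eq: f}) it corresponds to a tuple $\omega \in \Q^{|\nu|}$ well-defined modulo the image of $f_\nu$. I would first show that this identification, together with the lower-hull construction of Section~\ref{sec: toric interpretation}, produces a simplicial fan $\Sigma_\omega$ precisely when $\chi$ is generic in a chamber of the GKZ fan, and that two generic characters give the same $\Sigma_\omega$ if and only if they lie in the same chamber. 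This is exactly the content of \S 15.2 of \cite{CLS}, where the GKZ (secondary) fan is defined combinatorially so that its chambers parameterize the regular triangulations of $\nu$; the essential input is that the ambiguity $\omega \mapsto \omega + f_\nu(m)$ only translates the lifted points $(v_i,w_i)$ by an affine-linear function and hence does not change the lower hull.

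Next I would compute the semi-stable locus. By the Hilbert–Mumford / King criterion for torus actions, a point $x \in \A^{|\nu|}$ is $\chi$-semistable under $S_\nu$ if and only if the character $\chi$ lies in the cone spanned by $\{\op{deg}(x_i) \mid x_i \neq 0\}$ inside $\op{Hom}(S_\nu,\gm)_{\Q}$. Translating through the exact sequence \eqref{eq: f}, this cone condition is equivalent to saying that the complementary set $I_x := \{i \mid x_i=0\}$ has the property that the cone generated by the $v_i$ with $i \notin I_x$ contains the relative interior of some cone of $\Sigma_\omega$. By the standard irrelevant-ideal description of the Cox open set (Proposition~5.1.9 of \cite{CLS}), this means exactly that the $S_\nu$-semistable locus $(\A^{|\nu|})^{\op{ss}}(\chi)$ coincides with the Cox open set $X_{\op{Cox}(\Sigma_p)} \times \gm^{|\nu \setminus \Sigma_p(1)|}$, where the torus factors account for the rays of $\op{Cone}(\nu)$ not used by the triangulation $\T_p$.

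Combining these two steps gives the desired isomorphism of stacks
\[
[(\A^{|\nu|})^{\op{ss}}(\chi)/S_\nu] \cong [X_{\op{Cox}(\Sigma_p)} \times \gm^{|\nu \setminus \Sigma_p(1)|}/S_\nu] = \mathcal X_{\T_p},
\]
and then Corollary~\ref{cor: reduce triangulation} together with the spanning hypothesis on $\nu$ identifies $\mathcal X_{\T_p}$ with the Cox stack $\mathcal X_{\Sigma_p}$. The bijectivity of $\sigma_p \mapsto \T_p$ follows by constructing the inverse: given a regular triangulation $\T$, choose strictly convex weights $\omega$ inducing $\T$ by the lower hull, let $\chi$ be the corresponding character, and verify that the open chamber of strictly convex weights inducing $\T$ is independent (up to $f_\nu(M_{\Q})$) of the choice.

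The main obstacle I expect is the bookkeeping at the two boundaries: on the GIT side, matching King's semistability criterion with the irrelevant ideal, and on the fan side, handling the rays $v_i \in \nu$ that do not appear in $\Sigma_p(1)$. The first is essentially the toric GIT dictionary (Theorem~14.2.13 of \cite{CLS}), while the second is precisely why we needed Proposition~\ref{prop: same triangulations} and its corollary; the assumption that $\nu$ spans $N_{\R}$ ensures $f_\nu$ is injective so that the snake-lemma argument there applies and the auxiliary $\gm$-factors can be absorbed.
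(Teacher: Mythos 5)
Your route is the same as the paper's: the paper disposes of the chamber--triangulation bijection and the identification of the semistable locus by citing Proposition 15.2.9 of \cite{CLS} (Chapter 7, Theorem 1.7 of \cite{GKZ} in the polytope language), and then obtains the stack isomorphism from Corollary~\ref{cor: reduce triangulation}, using the spanning hypothesis exactly as you do to absorb the $\gm$-factors attached to points of $\nu$ that are not rays of $\Sigma_p$. Your proposal simply unpacks the cited result (secondary fan via weights and lower hulls, King's criterion for torus semistability, the irrelevant-ideal description of the Cox open set) and finishes with the same corollary, so in substance it matches the paper.

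One intermediate sentence is wrong as stated, though it does not sink the argument because you immediately fall back on the irrelevant-ideal description. You claim that $\chi$-semistability of $x$ is equivalent to the condition that $\op{Cone}(v_i \mid x_i \neq 0)$ contains the relative interior of some cone of $\Sigma_\omega$. Take $\nu = \{e_1, e_2, -e_1, -e_2\}$ in $N = \Z^2$ and the chamber whose triangulation gives the complete fan of $\P^1 \times \P^1$: the point $x$ with $x_{e_1} = x_{-e_1} = 0$ and $x_{e_2}, x_{-e_2} \neq 0$ is unstable (every generator $\prod_{v_i \notin \sigma(1)} x_i$ of the irrelevant ideal vanishes on it), yet $\op{Cone}(e_2, -e_2)$ contains the relative interior of the cone $\op{Cone}(e_2) \in \Sigma_\omega$. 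Reading ``some cone'' as ``some maximal cone'' fails in the other direction, e.g.\ for the semistable point with $x_{e_1} = x_{e_2} = 0$ in the $\P^2$ configuration $\{e_1, e_2, -e_1-e_2\}$. The correct Gale-dual translation of King's criterion, for $\chi$ generic in the chamber of $\T_p$, is that the zero set $\{i \mid x_i = 0\}$ be contained in the ray set of a single cone of $\T_p$, equivalently that some monomial $\prod_{v_i \notin \sigma(1)} x_i$ with $\sigma$ maximal be nonzero at $x$; with that substitution your identification of $(\A^{|\nu|})^{\op{ss}}(\chi)$ with $X_{\op{Cox}(\Sigma_p)} \times \gm^{|\nu \setminus \Sigma_p(1)|}$, and hence the rest of your argument, is exactly the content of the reference the paper invokes.
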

\begin{proof}
See Proposition 15.2.9 of \cite{CLS} or Chapter 7, Theorem 1.7 of \cite{GKZ} for the original formulation in terms of polytopes.  The final statement follows from Corollary~\ref{cor: reduce triangulation}.
\end{proof}

Given a set of lattices points $\nu$, we may ask when $S_\nu$ satisfies the quasi-Calabi-Yau condition.  This turns out to be very similar to the notion of a Gorenstein cone.
\begin{definition}
We say that a finite set $\nu = \{v_1, ..., v_n \} \subseteq N$ satisfies the \newterm{Calabi-Yau condition} if there exists a $m \in M$ such that $\langle m, v_i \rangle =1$ for all $i$.
\end{definition}

\begin{example}
If $\nu$ is the set of primitive ray generators of a Gorenstein cone, then it satisfies the Calabi-Yau condition by definition.
\end{example}

\begin{lemma} If the collection $\nu$ satisfies the Calabi-Yau condition, then the corresponding torus $S_\nu$ satisfies the quasi-Calabi-Yau condition.    
\label{lem: CY triangulation}
\end{lemma}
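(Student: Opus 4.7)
The plan is to unpack both the quasi-Calabi-Yau condition and the definition of $S_\nu$ through the duality $\op{Hom}(-, \gm)$, thereby reducing the statement to a one-line identity about the map $f_\nu$ from \eqref{eq: f}. First I would observe that the quasi-Calabi-Yau condition for $S_\nu \subseteq \gm^{n}$ is equivalent to asking that the composite character
\[
\chi : S_\nu \hookrightarrow \gm^{n} \overset{\times}{\longrightarrow} \gm
\]
be torsion in the character lattice $\op{Hom}(S_\nu, \gm)$, since the torsion subgroup there is exactly the kernel of restriction to the identity component $\widetilde{S_\nu}$.

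Next I would dualize via $\op{Hom}(-, \gm)$. The multiplication $\times$ on $\gm^{n}$ dualizes to the element $\mathbf{1} := \sum_{i=1}^{n} e_i \in \Z^{n}$, and the inclusion $S_\nu \hookrightarrow \gm^{n}$ dualizes to the surjection $\pi : \Z^{n} \to \op{coker}(f_\nu)$ from the defining sequence of $S_\nu$. Chasing these identifications, the character $\chi$ corresponds to the class $\pi(\mathbf{1}) \in \op{coker}(f_\nu)$, so it suffices to show that $\mathbf{1} \in \op{Im}(f_\nu) = \ker(\pi)$.

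Finally, I would invoke the Calabi-Yau hypothesis directly: it supplies $m \in M$ with $\langle v_i, m\rangle = 1$ for every $i$, so by the definition of $f_\nu$,
\[
f_\nu(m) = \sum_{i=1}^{n} \langle v_i, m\rangle e_i = \sum_{i=1}^{n} e_i = \mathbf{1}.
\]
Hence $\pi(\mathbf{1}) = 0$, so $\chi$ is not merely torsion but in fact the trivial character; a fortiori its restriction to $\widetilde{S_\nu}$ is trivial, which is exactly the quasi-Calabi-Yau condition. There is no substantive obstacle in this argument; the only point requiring attention is bookkeeping of the character and cocharacter lattices under the Hom-duality, and the slight notational issue that the quasi-Calabi-Yau condition of Section~\ref{sec: algebraic} was stated for tori in $\gm^{n+r}$ whereas here we apply it in the $r=0$ case.
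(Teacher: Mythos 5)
Your proof is correct and follows essentially the same route as the paper: both dualize the defining sequence of $S_\nu$ via $\op{Hom}(-,\gm)$ and use the Calabi-Yau hypothesis to see that $\sum_i e_i = f_\nu(m)$ lies in $\ker(\pi)$, so the multiplication character is trivial (the paper phrases this as $g\circ h = 0$ implying $h^\vee \circ g^\vee = 0$, you phrase it as $\pi(\mathbf{1})=0$). Your observation that the character is trivial on all of $S_\nu$, not merely torsion, matches the paper's subsequent remark distinguishing the Calabi-Yau from the quasi-Calabi-Yau condition.
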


\begin{proof}
The collection $\nu$ gives a map
\[
M \overset{f}{\rightarrow} \Z^{|\nu|} \overset{g} \rightarrow \op{coker} \rightarrow 0
\]
where $f(m) := \sum_{v \in \nu} \langle m, v \rangle e_v$.  Therefore the Calabi-Yau condition on $\nu$ is equivalent to  $\sum_{v \in \nu}  e_v$ lying in the image of $f$ which is the kernel of $g$.

Consider the map
$h : \Z \to \Z^{|\nu|}$ defined by $h(1) = \sum_{v \in \nu}  e_v$.  
Another formulation is of the Calabi-Yau condition on $\nu$ is $g \circ h = 0$.

Recall that $S_\nu = \gm \otimes \op{coker}^\vee$ and the inclusion is given by $\op{Id}_{\gm} \otimes g^\vee$.  Furthermore, under the identification $\gm \otimes \Z^{|\nu|} \cong \gm^{|\nu|}$, the multiplication map is just $\op{Id}_{\gm} \otimes h^\vee$.  So the quasi-Calabi-Yau condition on $S_\nu$ can be formulated as $\op{Id}_{\gm} \otimes (h^\vee \circ g^\vee) = \bf{1}$. 

So, we have reduced to observing that $g \circ h = 0 $ implies $\op{Id}_{\gm} \otimes (h^\vee \circ g^\vee) = \bf{1}$.

\end{proof}

\begin{remark}
As stated in the proof, assuming the quasi-Calabi-Yau condition on $S_\nu$ is equivalent to  $\op{Id}_{\gm} \otimes (h^\vee \circ g^\vee) = \bf{1}$.  This is equivalent to $g \circ h$ being torsion, which means that the quasi-Calabi-Yau condition on $S_\nu$ is slightly less restrictive than the Calabi-Yau condition on the lattice points $\nu$.
\end{remark}

\subsection{Split Toric Vector Bundles}

In this subsection, we recall some basic facts about vector bundles on toric varieties and then give their stacky analogues. We later specialize to the context of nef partitions to prepare ourselves for the proof in the following section.

Let $\Sigma \subseteq N_{\R}$ be a fan and $X_\Sigma$ be the corresponding toric variety. Each ray $\rho \in \Sigma(1)$ gives a torus invariant divisor $D_\rho$ and any Cartier divisor can be expressed as a linear combination, 
\[
D := \sum_{\rho \in \Sigma(1)} a_\rho D_\rho.
\]

We denote by $\mathscr{L}_D$ the total space of $\mathcal{O}_{X_\Sigma}(D)$.  Adding a dilation action to the fibers endows $\mathscr{L}_D$ with the action of an open dense torus.  Indeed $\mathscr{L}_D$ is the toric variety associated to a fan
$\sigma_D \subseteq N_{\R} \times \R$ defined as follows.

   For any cone $\sigma \in \Sigma$, we define $\sigma_D\subseteq N_{\R} \times \R$ by the formula
$$
\sigma_D = \operatorname{Cone}\left( (0,1), (u_\rho, -a_\rho) \middle| \ \rho \in \sigma(1)\right).
$$
The fan $\Sigma_D$ is defined as the collection of cones, $\sigma_D$ for all $\sigma \in \Sigma$, and their proper faces.

\begin{proposition}
The map $\pi: X_{\Sigma_D} \rightarrow X_\Sigma$ induced by the projection $N_{\R} \times \R \rightarrow N_{\R}$ is the usual bundle projection map.
\end{proposition}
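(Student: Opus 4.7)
The plan is to verify the isomorphism $X_{\Sigma_D} \cong \mathscr{L}_D$ together with compatibility of projection maps by working affine-locally over the open cover $\{U_\sigma\}_{\sigma\in\Sigma}$, and then checking that the transition functions on overlaps reproduce those of $\mathcal{O}_{X_\Sigma}(D)$.

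First I would fix a maximal cone $\sigma\in\Sigma$. Because $D$ is Cartier there exists an element $m_\sigma\in M$, unique modulo $\sigma^\perp$, satisfying $\langle m_\sigma,u_\rho\rangle=-a_\rho$ for every $\rho\in\sigma(1)$. A direct dualization of $\sigma_D=\operatorname{Cone}((0,1),(u_\rho,-a_\rho)\mid \rho\in\sigma(1))$ shows that $(m,b)\in\sigma_D^\vee$ if and only if $b\geq 0$ and $\langle m,u_\rho\rangle\geq b\,a_\rho$ for all $\rho\in\sigma(1)$. The substitution $m':=m+b m_\sigma$ converts the latter inequality into $\langle m',u_\rho\rangle\geq 0$, yielding a semigroup isomorphism
\[
\sigma_D^\vee\cap(M\oplus\Z)\;\xrightarrow{\;\cong\;}\;(\sigma^\vee\cap M)\oplus\Z_{\geq 0},\qquad (m,b)\mapsto(m+b m_\sigma,b).
\]
Passing to $\op{Spec}$ of the associated semigroup algebras produces an isomorphism $U_{\sigma_D}\cong U_\sigma\times\A^1$. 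Moreover, the projection $\pi$ induced by $N_{\R}\times\R\to N_{\R}$ corresponds dually to the inclusion $\sigma^\vee\cap M\hookrightarrow\sigma_D^\vee\cap(M\oplus\Z)$ sending $m\mapsto(m,0)$; under the isomorphism above this is the inclusion into the first factor, so $\pi|_{U_{\sigma_D}}$ is the first projection $U_\sigma\times\A^1\to U_\sigma$. This simultaneously identifies each chart with a trivial rank-one bundle and identifies $\pi$ with the expected bundle projection on that chart.

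Next I would compare these local trivializations on an overlap $U_\sigma\cap U_\tau$. The two trivializations are built from $m_\sigma$ and $m_\tau$ respectively, and one computes that the fiber coordinate of $U_{\sigma_D}$ is related to that of $U_{\tau_D}$ by multiplication with the character $\chi^{m_\sigma-m_\tau}$. These are exactly the transition cocycles describing $\mathcal{O}_{X_\Sigma}(D)$ from its Cartier data $\{m_\sigma\}$, so the fan $\Sigma_D$ globally assembles the charts into the total space $\mathscr{L}_D$, with $\pi$ becoming the bundle projection.

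The main obstacle is really only bookkeeping: keeping the sign convention relating the Cartier divisor $D=\sum a_\rho D_\rho$ to its support function $\phi_D$ (here $\phi_D(u_\rho)=-a_\rho$) consistent throughout the dual cone computation, so that the substitution $m\mapsto m+bm_\sigma$ really does land in $\sigma^\vee$ rather than in its opposite. Once this is pinned down, the argument is a direct application of the standard Cartier/toric dictionary of \cite[Chs.~4 and 7]{CLS}.
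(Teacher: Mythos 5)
Your argument is correct. Note, though, that the paper does not prove this statement at all: it simply cites Proposition 7.3.1 of \cite{CLS}, and what you have written is essentially the standard textbook proof of that cited result, namely the chart-by-chart computation using the Cartier data $\{m_\sigma\}$. Your dualization is right: with $\sigma_D=\operatorname{Cone}((0,1),(u_\rho,-a_\rho))$ one gets $\sigma_D^\vee\cap(M\oplus\Z)=\{(m,b): b\ge 0,\ \langle m,u_\rho\rangle\ge b a_\rho\}$, whose degree-$b$ piece is exactly $\Gamma(U_\sigma,\O_{X_\Sigma}(-bD))$, so the chart is $\op{Spec}$ of $\bigoplus_{b\ge 0}\Gamma(U_\sigma,\O(-bD))$, i.e.\ the total space of $\O(D)$ over $U_\sigma$, and the inclusion $m\mapsto(m,0)$ identifies $\pi$ with the projection. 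Two small points are worth pinning down if you want this to be fully self-contained: first, you implicitly use that the $\sigma_D$ for $\sigma$ maximal are the maximal cones of a genuine fan whose charts cover $X_{\Sigma_D}$; this follows by exhibiting $\tau_D$ as the face of $\sigma_D$ cut out by $(m',0)$ whenever $\tau=\sigma\cap(m')^\perp$, $m'\in\sigma^\vee$. Second, on the cocycle: the fiber coordinate on the $\sigma$-chart is $t_\sigma=\chi^{(-m_\sigma,1)}$, so $t_\sigma=\chi^{m_\tau-m_\sigma}\,t_\tau$; since fiber coordinates transform by the \emph{inverse} of the transition functions of the bundle, this is precisely the cocycle assembling $\O_{X_\Sigma}(D)$ rather than $\O_{X_\Sigma}(-D)$, which resolves the sign bookkeeping you flag and confirms the paper's convention that $X_{\Sigma_D}=\mathscr{L}_D$ is the total space of $\O_{X_\Sigma}(D)$.
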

\begin{proof}
This is  Proposition 7.3.1 of \cite{CLS}.
\end{proof}

Iterating this construction provides the following description of a direct sum of line bundles. Suppose one has $r$ Cartier divisors $D_i = \sum_{\rho \in \Sigma(1)} a_{i\rho} D_\rho$. Let $e_i$ be the standard basis for $\R^r$. Given $\sigma \in \Sigma$, we define the cone
$$
\sigma_{D_1,\ldots, D_r} : = \operatorname{Cone}\left(u_\rho - a_{1\rho}e_1- \ldots - a_{r\rho}e_r\middle| \  \rho \in \sigma(1)\right) + \op{Cone}(e_i | \ i \in I \subseteq \{1, ..., r\})
$$
and the the fan $\Sigma_{D_1,\ldots, D_r}$ to be the collection of cones $\sigma_{D_1,\ldots, D_r}$.

\begin{remark}
Any toric vector bundle that is also a toric variety is a direct sum of line bundles which can be constructed as above \cite{Oda78}.
\end{remark}

Notice that $S_{\Sigma(1)}  \subseteq \gm^{|\Sigma(1)|}$.  Hence, each ray $\rho \in \Sigma(1)$ gives a character $\chi_\rho$ of $S_{\Sigma(1)}$ via composition with the projection.  Hence, given a divisor $D = \sum a_\rho D_\rho$ on $X_\Sigma$, we can associate a character 
\[
\chi_D := \prod \chi_\rho^{a_\rho}
\]
of $S_\nu$.

\begin{proposition}
Let $D_1, ..., D_r$ be divisors on $X_\Sigma$.  There is an isomorphism of stacks,
\[
\mathcal X_{\Sigma_{D_1, ..., D_r}} \cong \op{tot}(\oplus_{i=1}^r \O_{\mathcal{X}_{\Sigma}}(\chi_{D_i})).
\]

\label{prop: vector bundle on a stack}
\end{proposition}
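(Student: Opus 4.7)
The plan is to unfold both sides as global quotient stacks $[X/G]$ and verify they agree, identifying $X$ and $G$ and the action on each side. The key observation driving everything is that the extra rays $e_1,\ldots,e_r$ of $\Sigma_{D_1,\ldots,D_r}$ lie in \emph{every} maximal cone, and the $\Z^r$ summand of the ambient lattice pairs only with these rays, so the rank $r$ inflation is completely decoupled from the $\Sigma$-part after the Cox quotient.

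\textbf{Step 1 (identify the LHS Cox open set).} First I would spell out $\Sigma_{D_1,\ldots,D_r}(1) = \{\widetilde{u}_\rho := u_\rho - \sum_j a_{j\rho} e_j : \rho \in \Sigma(1)\} \sqcup \{e_1,\ldots,e_r\}$, so that the Cox ambient affine space is $\A^{|\Sigma(1)|} \times \A^r$ with coordinates $(x_\rho, y_i)$. The maximal cones are $\sigma_{D_1,\ldots,D_r}$ for $\sigma$ maximal in $\Sigma$, and by construction each such cone contains every $e_i$. Hence the irrelevant ideal of $\Sigma_{D_1,\ldots,D_r}$ is generated by monomials in the $x_\rho$ alone (the same monomials which cut out the Cox open set of $\Sigma$), yielding
\[
X_{\op{Cox}(\Sigma_{D_1,\ldots,D_r})} = X_{\op{Cox}(\Sigma)} \times \A^r.
\]

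\textbf{Step 2 (identify the acting torus).} Next I would apply the defining right exact sequence \eqref{eq: f} to $\Sigma_{D_1,\ldots,D_r}(1)$ in the lattice $N \oplus \Z^r$. An element $(m,b) \in M \oplus \Z^r$ pairs with $\widetilde{u}_\rho$ to give $\langle m, u_\rho\rangle - \sum_j a_{j\rho} b_j$ and with $e_i$ to give $b_i$. Dualizing with $\op{Hom}(-,\gm)$ and solving the resulting equations shows that a point $(t_\rho, s_i) \in \gm^{|\Sigma(1)|+r}$ lies in $S_{\Sigma_{D_1,\ldots,D_r}(1)}$ if and only if $(t_\rho) \in S_{\Sigma(1)}$ and $s_j = \prod_\rho t_\rho^{a_{j\rho}} = \chi_{D_j}((t_\rho))$. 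This gives a canonical isomorphism
\[
S_{\Sigma_{D_1,\ldots,D_r}(1)} \;\xrightarrow{\ \sim\ }\; S_{\Sigma(1)}, \qquad (t_\rho, s_i) \longmapsto (t_\rho),
\]
under which the action on $\A^r$ becomes the diagonal action by the characters $\chi_{D_1},\ldots,\chi_{D_r}$.

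\textbf{Step 3 (match the RHS).} Finally, I would recall the standard description of the total space of a direct sum of equivariant line bundles: on a global quotient stack $[Y/G]$, the bundle $\O(\chi)$ has total space $[Y \times \A^1 / G]$ with $G$ acting on the fiber by $\chi$. Applied to $\mathcal{X}_\Sigma = [X_{\op{Cox}(\Sigma)}/S_{\Sigma(1)}]$ and the characters $\chi_{D_i}$, this gives
\[
\op{tot}\bigl(\textstyle\bigoplus_{i=1}^r \O_{\mathcal{X}_{\Sigma}}(\chi_{D_i})\bigr) = [X_{\op{Cox}(\Sigma)} \times \A^r / S_{\Sigma(1)}],
\]
with action matching exactly what Steps 1 and 2 produce for $\mathcal{X}_{\Sigma_{D_1,\ldots,D_r}}$. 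This yields the asserted isomorphism of stacks.

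\textbf{Main obstacle.} The routine part is combinatorial bookkeeping; the genuine subtlety is ensuring the sign conventions line up so that the character $\chi_{D_i} = \prod_\rho \chi_\rho^{a_{i\rho}}$ extracted from the dual exact sequence is exactly the character whose associated line bundle is $\O_{\mathcal{X}_\Sigma}(D_i)$ in the sense of Proposition~\ref{thm: stack realization} and the Cox correspondence. Getting this compatibility right — i.e., that the equivariant structure read off from the fan $\Sigma_{D_1,\ldots,D_r}$ matches the equivariant structure defining $\op{tot}\bigl(\bigoplus\O(\chi_{D_i})\bigr)$ — is where one must be careful, but it follows directly from how the divisor-to-character correspondence $D \mapsto \chi_D$ was set up immediately before the proposition.
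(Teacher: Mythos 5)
Your proposal is correct and follows essentially the same route as the paper: identify the Cox open set of $\Sigma_{D_1,\ldots,D_r}$ as $X_{\op{Cox}(\Sigma)}\times\A^r$, identify $S_{\Sigma_{D_1,\ldots,D_r}(1)}$ with $S_{\Sigma(1)}$ embedded in $\gm^{|\Sigma(1)|}\times\gm^r$ via the characters $\chi_{D_i}$ on the last $r$ factors, and conclude by the standard quotient description of $\op{tot}(\oplus_i\O(\chi_{D_i}))$. The only cosmetic difference is that you solve the defining equations of the subtorus directly, whereas the paper exhibits an explicit map inducing an isomorphism of cokernels before dualizing; these are the same computation.
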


\begin{proof}
By definition 
\[
\mathcal X_{\Sigma_{D_1, \ldots, D_r}}
=
[X_{\op{Cox}(\Sigma_{D_1, \ldots, D_r})} / S_{\Sigma_{D_1, \ldots, D_r}(1)}].
\]
Now, there is a bijection
\begin{align*}
\Sigma_{D_1, \ldots, D_r}(1) & \to \Sigma(1) \cup \{1, \ldots, r\} \\
u_\rho - \sum a_{i\rho} e_i & \mapsto u_\rho \\
e_i & \mapsto i. \\
\end{align*}
This identifies 
\[
X_{\op{Cox}(\Sigma_{D_1, \ldots, D_r})} \subseteq \A^{\Sigma_{D_1, \ldots, D_r}(1)} = \A^{\Sigma(1)} \times \A^r.
\]

Now, by definition, $\Sigma_{D_1,..., D_r}$ consists of the cones
$$
\sigma_{D_1,\ldots, D_r} := \operatorname{Cone}\left(u_\rho - a_{1\rho}e_1- \ldots - a_{r\rho}e_r\middle| \  \rho \in \sigma(1)\right) + \op{Cone}(e_i | i \in I \subseteq \{1, ..., r\}).
$$

Recall the notation 
\[
\Sigma_r = \{ \op{Cone} (e_i | \ i \in I) | \ I \subseteq \{1, ..., r \} \}.
\]
We have,
\begin{align*}
\op{Cox}(\Sigma_{D_1, ..., D_r}) &  = \{ \op{Cone}(e_\rho | \ \rho \in \sigma) | \ \sigma \in \Sigma_{D_1, ..., D_r}) \} \\
& = \{ \op{Cone}(e_\rho | \ \rho \in \sigma) | \ \sigma \in \Sigma  \} \times \Sigma_r \\
&= \op{Cox}(\Sigma) \times \Sigma_r \\
\end{align*}
The first line is by definition.  The second line comes from the enumeration of cones of $\Sigma_{D_1,..., D_r}$ in terms of cones of $\Sigma$ and subsets of $\{1, ..., r\}$.  The third line is again by definition.

From the above we obtain the formula,
\[
X_{\op{Cox}(\Sigma_{D_1, ..., D_r})} = X_{\op{Cox}(\Sigma)} \times \A^r.
\]

Now set $\nu$ to be the set of primitive ray generators of $\Sigma$ and $\mu$ to be the set of primitive ray generators of $\Sigma_{D_1, ..., D_r}$.
We claim that
\[
S_\nu \cong S_\mu
\]
under which  $S_\nu$ is embedded in $\gm^{|\nu|} \times \gm^r$ where the map to $\gm^{|\nu|}$  is the one that comes with $S_\nu$, and the map to $\gm^r$ is given by the characters $\chi_{D_i}$. 

To prove the claim, first observe that $\nu$ gives a right exact sequence,
\begin{align*}
M & \to \Z^{|\nu|} \to \op{coker} \to 0 \\
m & \mapsto \sum_{\rho \in \Sigma(1)} \langle m, u_\rho \rangle e_\rho.
\end{align*}
Similarly, $\mu$ gives a right exact sequence,
\begin{align*}
M \oplus \Z^r & \to \Z^{|\nu|} \oplus \Z^r \to \op{coker} \to 0 \\
(m, v) & \mapsto \sum_{\rho \in \Sigma(1)} \langle (m,v), u_\rho - \sum a_{i\rho}e_i \rangle e_\rho + \sum_{i=1}^r \langle (m,v), e_i \rangle e_i. \\
\end{align*}
This latter map sends $(m,0)$ to $\sum_{\rho \in \Sigma(1)} \langle m, u_\rho \rangle e_\rho$ and $(0,e_i^*)$ to $-\sum_{\rho \in \Sigma(1)} a_{i\rho} e_\rho + e_i$.  Hence the map
\begin{align*}
\Z^{|\nu|} \oplus \Z^r & \to \Z^{\nu} \\
e_i & \mapsto \sum_{\rho \in \Sigma(1)} a_{i\rho} e_\rho \\
\end{align*}
induces an isomorphism of cokernels.
Applying $\op{Hom}(-,\gm)$ yields the isomorphism $
S_\nu \cong S_\mu$ with the description claimed.

In summary, 
\[
[X_{\op{Cox}(\Sigma_{D_1, ..., D_r})} / S_{\Sigma_{D_1, ..., D_r}(1)}] = [X_{\op{Cox}(\Sigma)} \times \A^r / S_{\Sigma(1)}]
\]
with the action on $\A^r$ given by the characters $\chi_{D_1}, ..., \chi_{D_r}$.  The result follows.

\end{proof}

\begin{lemma}
Let $\Sigma$ be a complete fan and 
\[
D_i = \sum a_{i\rho} D_\rho
\]
be nef divisors.
The dual cone to $|\Sigma_{-D_1, ..., -D_r}|$ is equal to the Cayley cone on the set of polytopes
\[
\Delta_i :=  \{ m \in M_{\R} | \ \langle m, u_\rho \rangle \geq -a_{i\rho} \text{ for all } \rho \in \Sigma(1) \},
\]
i.e.,
\[
|\Sigma_{-D_1, ..., -D_r}|^\vee = \R_{\geq 0} (\Delta_1 * \ldots * \Delta_r) = \R_{\geq 0} (\Delta_1 +e_1^*) + \ldots + \R_{\geq 0} (\Delta_r + e_r^*).
\]

\label{lem: dual cone is Cayley cone}
\end{lemma}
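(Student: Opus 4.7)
The plan is to show the two cones coincide by reading off an inequality description of one and a generator decomposition of the other, with the nefness hypothesis on the $D_i$ entering only in the nontrivial half of the comparison.

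First, I would read off an inequality description of the dual cone. By the defining formula for $\sigma_{D_1, \ldots, D_r}$ in the previous subsection, each maximal cone of $\Sigma_{-D_1, \ldots, -D_r}$ is generated (as $\sigma$ ranges over maximal cones of $\Sigma$) by the rays $u_\rho + \sum_i a_{i\rho} e_i$ for $\rho \in \sigma(1)$ together with $e_1, \ldots, e_r$. Since the dual of a union of cones is the intersection of their duals, this gives
\begin{equation*}
|\Sigma_{-D_1,\ldots,-D_r}|^\vee = \left\{ (m, b_1, \ldots, b_r) \in \overline M_{\R}\ \middle|\ b_i \geq 0,\ \langle m, u_\rho \rangle + \sum_i b_i a_{i\rho} \geq 0 \ \forall \rho \in \Sigma(1) \right\}.
\end{equation*}
On the other side, by definition the Cayley cone $\R_{\geq 0}(\Delta_1 * \cdots * \Delta_r)$ consists of nonnegative combinations $\sum_i b_i (m_i, e_i^*)$ with $b_i \geq 0$ and $m_i \in \Delta_i$, which equals
\begin{equation*}
\left\{ (m, b_1, \ldots, b_r)\ \middle|\ b_i \geq 0,\ m \in b_1 \Delta_1 + \cdots + b_r \Delta_r \right\},
\end{equation*}
where $b_i \Delta_i = \{0\}$ when $b_i = 0$.

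The easy containment, Cayley cone $\subseteq$ dual cone, is immediate on generators: a ray $(m_i, e_i^*)$ with $m_i \in \Delta_i$ satisfies $\delta_{ij} \geq 0$ when paired against any $e_j$, and $\langle m_i, u_\rho \rangle + a_{i\rho} \geq 0$ when paired against $u_\rho + \sum_j a_{j\rho} e_j$, by the very definition of $\Delta_i$. For the reverse, given $(m, b_1, \ldots, b_r)$ in the dual cone I must write $m = \sum_i b_i m_i$ with $m_i \in \Delta_i$, equivalently $m \in b_1\Delta_1 + \cdots + b_r\Delta_r$. At this point I would invoke the standard toric fact that for nef Cartier divisors on a complete toric variety, the polytope associated to a nonnegative linear combination equals the Minkowski sum of the individual polytopes (see Chapter 6 of \cite{CLS}), i.e.,
\begin{equation*}
b_1 \Delta_1 + \cdots + b_r \Delta_r = \left\{ m \in M_{\R}\ \middle|\ \langle m, u_\rho \rangle \geq -\sum_i b_i a_{i\rho} \ \forall \rho \in \Sigma(1) \right\}.
\end{equation*}
Our dual cone hypothesis is exactly that $m$ lies in this Minkowski sum, which supplies the desired $m_i \in \Delta_i$ and completes the inclusion.

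The main obstacle is the Minkowski sum identity above. The inclusion $\sum_i b_i \Delta_i \subseteq \{m : \langle m, u_\rho \rangle \geq -\sum_i b_i a_{i\rho}\}$ always holds by summing the defining inequalities, but the reverse can fail in general: it requires the normal fan of each $\Delta_i$ to be refined by $\Sigma$, which is precisely what nefness of $D_i$ guarantees through concavity of the corresponding support functions on $|\Sigma|$. Without nefness the halfspace intersection can be strictly larger than the Minkowski sum, and the argument would fail at exactly this step.
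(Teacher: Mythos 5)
Your proposal is correct and follows essentially the same route as the paper: both sides are compared by pairing against the ray generators $e_i$ and $u_\rho+\sum_i a_{i\rho}e_i$ (giving the easy containment of the Cayley cone and the inequality description of the dual cone), and the reverse containment is reduced to the identity $\Delta_{\sum_i b_i D_i} = b_1\Delta_1+\cdots+b_r\Delta_r$, which holds because the $D_i$ are nef (the paper cites the support-function characterization in Chapter 6 of \cite{CLS} together with Theorem A.18 of \cite{Oda88} for exactly this step). Your closing remark correctly pinpoints nefness as the ingredient making the Minkowski-sum identity valid, which is precisely where the paper's proof uses it as well.
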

\begin{proof}
First we show that 
\[
 \R_{\geq 0} (\Delta_1 * \ldots * \Delta_r)  \subseteq |\Sigma_{-D_1, ..., -D_r}|^\vee.
\]
For this, it is enough to show that $( \Delta_i +e_i^*) \subseteq |\Sigma_{-D_1, ..., -D_r}|^\vee$.  
Now notice that
\[
\Delta_i = \{ m \in M_{\R} | \ \langle m, u_\rho \rangle \geq -a_{i\rho} \text{ for all } \rho \in \Sigma(1) \}.
\]
Hence, if we pair $m+e_i^* \in \Delta_i$ with a primitive ray generator $e_j$ we get $\delta_{ij} \geq 0$ and if we pair $m+e_i^*$ with a primitive ray generator $u_\rho + \sum_{i=1}^r a_{i\rho}e_i$ we get $\langle m, u_\rho \rangle + a_{i\rho} \geq 0$.

Now we show that   
\[
|\Sigma_{-D_1, ..., -D_r}|^\vee \subseteq  \R_{\geq 0} (\Delta_1 * \ldots * \Delta_r).  
\]
Let $(m, c_1, ..., c_r) \in |\Sigma_{-D_1, ..., -D_r}|^\vee$.  As $e_i \in |\Sigma_{-D_1, ..., -D_r}|$, we have an inequality 
\[
\langle (m,c_1, ..., c_r), e_i \rangle = c_i \geq 0.
\] 
Similarly, $(u_\rho, a_{1\rho}, ..., a_{r\rho}) \in |\Sigma_{-D_1, ..., -D_r}|$ implies that
\[
\langle (m,c_1, ..., c_r), (u_\rho, a_{1\rho}, ..., a_{r\rho})  \rangle = \langle m, u_\rho \rangle + \sum c_i a_{i\rho} \geq 0.
\] 
Hence, $m$ lies in the polytope
\[
\Delta_{\sum c_i D_i} := \{ m \in M_{\R} | \ \langle m, u_\rho \rangle \geq - \sum c_i a_{i\rho} \text{ for all } \rho \in \Sigma(1) \}.
\]
Now, since each $D_i$ is nef, Lemma 6.16 and Theorem 6.17(g) of \cite{CLS}, imply that
\[
\Delta_i = \{ m \in M_{\R} \ | \ \op{min}_{d \in \Delta_i} \langle d, u \rangle \leq \langle m, u \rangle \}.
\]
and
\[
\Delta_{\sum c_i D_i} := \{ m \in M_{\R}  \ | \  \op{min}_{d \in  \Delta_{\sum c_i D_i}} \langle d, u \rangle \leq \langle m, u \rangle \}.
\]

Hence, by Theorem A.18 of \cite{Oda88}, 
\[
\Delta_{\sum c_i D_i} = \sum c_i \Delta_{D_i}.
\]
Therefore, $m \in \sum c_i \Delta_{D_i}$.  So, we may write
\[
m = \sum m_i
\]
for some $m_i \in c_i \Delta_{D_i}$.
Then, 
\[
(m, c_1, ..., c_r) = \sum (m_i +c_ie_i^*)  \in \R_{\geq 0} (\Delta_1 +e_1^*) + \ldots + \R_{\geq 0} (\Delta_r + e_r^*).
\]

\end{proof}

\begin{remark}
The lemma above is essentially the special case for toric varieties of the statement that global functions on a vector bundle $\mathcal E$ are given by
\[
\bigoplus_{i \in \N} \op{H}^0(\op{Sym}^i(\mathcal E^\vee)).
\]
\end{remark}

\begin{lemma}
Let  $\Sigma$ be a fan and suppose $X_\Sigma$ is semiprojective.  Let $D$ be a nef divisor.  Then  $\op{tot} \O(-D)$ is semiprojective.
\label{lem: semiprojective}
\end{lemma}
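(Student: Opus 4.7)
My approach is to verify the two defining conditions of semiprojectivity---existence of a torus fixed point and projectivity over an affine variety---directly using the toric description $\op{tot}\O(-D) = X_{\Sigma_{-D}}$ established earlier in this section. Writing $D = \sum a_\rho D_\rho$, the maximal cones of $\Sigma_{-D}$ are $\sigma_{-D} = \op{Cone}\bigl((0,1), (u_\rho, a_\rho) \mid \rho \in \sigma(1)\bigr)$ for $\sigma$ maximal in $\Sigma$.

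For the torus fixed point: since $X_\Sigma$ is semiprojective, there is a full-dimensional cone $\sigma \in \Sigma$. The corresponding $\sigma_{-D} \in \Sigma_{-D}$ has dimension $d+1$ because the $u_\rho$ (for $\rho \in \sigma(1)$) span $N_{\R}$ while $(0,1)$ is linearly independent of $N_{\R} \times \{0\}$; it remains strongly convex, since any non-negative relation $\sum \lambda_\rho (u_\rho, a_\rho) + \mu(0,1) = 0$ forces $\sum \lambda_\rho u_\rho = 0$, hence (by strong convexity of $\sigma$) all $\lambda_\rho = 0$, and then $\mu = 0$. This full-dimensional strongly convex cone yields a torus fixed point of $X_{\Sigma_{-D}}$.

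For projectivity over an affine, I would first invoke Lemma~\ref{lem: dual cone is Cayley cone} with $r=1$ to identify $|\Sigma_{-D}|^\vee$ with the strongly convex Cayley cone $\R_{\geq 0}(\Delta_D + e_1^*)$, and then verify that $|\Sigma_{-D}|$ coincides with its own double dual; this is where the nef hypothesis on $D$ enters essentially, since $|\Sigma_{-D}|^{\vee\vee}$ is the convex hull of $|\Sigma_{-D}|$ and we must confirm that $\bigcup_\sigma \sigma_{-D}$ already fills out the convex hull of its rays. With $|\Sigma_{-D}|$ established as a strongly convex polyhedral cone, the associated affine toric variety $X_{|\Sigma_{-D}|}$ serves as the base, equipped with a natural toric morphism $X_{\Sigma_{-D}} \to X_{|\Sigma_{-D}|}$. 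To show this morphism is projective, I would exhibit a strictly convex relative support function on $\Sigma_{-D}$: taking a strictly convex support function $\varphi \colon |\Sigma| \to \R$ provided by semiprojectivity of $X_\Sigma$, set $\widetilde{\varphi}(v, t) := \varphi(v)$. Linearity of $\widetilde{\varphi}$ on each $\sigma_{-D}$ is immediate from linearity of $\varphi$ on $\sigma$, and strict convexity across walls of $\Sigma_{-D}$ between maximal cones---which are precisely of the form $(\sigma \cap \sigma')_{-D}$ for adjacent $\sigma, \sigma' \in \Sigma$---reduces to strict convexity of $\varphi$ across the corresponding walls of $\Sigma$. The principal obstacle is convexity of $|\Sigma_{-D}|$: Lemma~\ref{lem: dual cone is Cayley cone} supplies the dual cone, but the converse inclusion $|\Sigma_{-D}|^{\vee\vee} \subseteq |\Sigma_{-D}|$ must be checked directly using the nef property of $D$, which is the combinatorial manifestation that the piecewise-linear function with values $a_\rho$ on ray generators extends convexly over $|\Sigma|$.
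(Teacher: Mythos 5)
Your outline correctly isolates where the content of the lemma lies, but it then defers exactly that point: the convexity of $|\Sigma_{-D}|$ is flagged as ``the principal obstacle \ldots must be checked directly using the nef property of $D$'' and never actually proved. This is the one step where the hypotheses enter, and it is not a formality. The paper's proof consists essentially of doing it: semiprojectivity of $X_\Sigma$ gives that $|\Sigma|$ is convex and full-dimensional (CLS Prop.\ 7.2.9), which is what makes ``nef'' equivalent to ``globally generated'' and hence makes the support function $\phi_D$ convex (CLS Thm.\ 6.1.7 and 6.3.12 --- note this equivalence genuinely needs the full-dimensional convex support, a point absent from your sketch); then $|\Sigma_{-D}|=\{(u,\lambda)\mid \lambda\geq\phi_{-D}(u)\}$ is convex because it is the region bounded by the graph of a convex piecewise-linear function. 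Without this, the rest of your argument cannot start: both the affine base and the notion of a (strictly) convex support function on $\Sigma_{-D}$ presuppose convex support. Moreover, the route you propose for it --- Lemma~\ref{lem: dual cone is Cayley cone} plus a double-dual comparison --- is not available, since that lemma is stated (and proved) only for \emph{complete} fans, whereas here $X_\Sigma$ is merely semiprojective; and in any case computing $|\Sigma_{-D}|^{\vee\vee}$ is just a reformulation of the convexity you are trying to establish.

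Two smaller points. First, $|\Sigma_{-D}|$ need not be strongly convex (take $D=0$ on a complete fan: the support is $N_{\R}\times\R_{\geq 0}$), so ``the affine toric variety $X_{|\Sigma_{-D}|}$'' is not well defined as stated; the correct affine base is $\op{Spec}$ of the global functions, i.e.\ of $\kappa[|\Sigma_{-D}|^\vee\cap(M\oplus\Z)]$. Second, once convexity of the support is known, your relative support function $\widetilde{\varphi}=\varphi\circ\pi$ is a workable but unnecessary detour: the paper instead notes that $X_{\Sigma_{-D}}$ is quasi-projective simply because it is a vector bundle over a quasi-projective variety, exhibits a full-dimensional cone of $\Sigma_{-D}$ exactly as you do, and then concludes by the criterion ``semiprojective $\Leftrightarrow$ quasi-projective with full-dimensional convex support'' (CLS Prop.\ 7.2.9). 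So the fix is to replace the deferred step by the $\phi_D$-convexity argument; with that inserted, either packaging of the remainder goes through.
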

\begin{proof}

By Proposition 7.2.9 of \cite{CLS}, $\Sigma$ has full dimensional convex support.
The support of $\Sigma_{-D}$ is
\[
|\Sigma_{-D}| =  \{ (u, \lambda) | \lambda \geq \phi_{-D}(u) \} = \{(u, \lambda) | \lambda \leq \phi_{D}(u) \} 
\]
where $\phi_{-D}$ is the support function corresponding to $-D$ (see e.g. page 335 of \cite{CLS}).  By Theorem 6.1.7 of loc.\ cit.\ $\phi_D$ is convex since being globally generated is equivalent to being nef when $\Sigma$ has full-dimensional convex support (Theorem 6.3.12 of loc. cit.).
Hence given $(u, \lambda), (u, \lambda') \in |\Sigma_{-D}|$, we have 
\begin{align*}
t\lambda + (1-t)\lambda' & \leq t\phi_D(u) + (1-t)\phi_D(u') \\
& \leq \phi_D(tu+(1-t)u').
\end{align*}
Therefore $|\Sigma_{-D}|$ is convex.

By Proposition 7.2.9 of \cite{CLS}   $X_{\Sigma_{-D}}$ is  semiprojective if and only if $X_{\Sigma_{-D}}$ is quasi-projective and $\Sigma_{-D}$ has full dimensional convex support.
Since $X_{\Sigma_{-D}}$ is a vector bundle over a quasi-projective variety, the pullback of any ample line bundle is ample. Therefore $X_{\Sigma_{-D}}$ is quasi-projective.
Furthermore, let $\sigma$ be a maximal cone in $\Sigma$ (which exists since $X$ is semiprojective).  Then the cone $\op{Cone}(u_\rho - \sum a_{i\rho} | \ \rho \in \sigma(1) ) + \op{Cone}(e_1, ..., e_r) \subseteq (N \oplus \Z^r)_{\R}$ and therefore has full dimensional support.

\end{proof}

We now specialize to the case of a nef-partition i.e. let $D_i \neq 0$ be nef divisors on $X_{\Sigma}$ such that
\[
D_1 + \ldots + D_r = -K := -\sum_{\rho \in \Sigma(1)} D_\rho
\]
as subvarieties (not just up to linear equivalence).
In the case where $\Sigma$ is the normal fan to a reflexive polytope $\Delta$, recall that a nef partition can be equivalently characterized as $p_1 \in \Delta_1, \ldots, p_r \in \Delta_r$ such that $\Delta_1 + \ldots + \Delta_r = \Delta$ and $\sum_{i=1}^r p_i$ is the unique interior lattice of $\Delta$.

\begin{proposition}
Let $\Sigma$ be a simplicial fan and suppose $X_\Sigma$ is semiprojective.  Let $D_i = \sum_{\rho \in \Sigma(1)} a_{i \rho} D_\rho$ be nef-divisors for $i=1, \ldots, r$.
Define
\[
\nu_{-D_1, ...,  -D_r} := \{ (u_{\rho} + \sum a_{i\rho}e_i) | \ \rho \in \Sigma(1), 1 \leq i \leq r \} \cup \{e_1, ..., e_r \}\subseteq N_{\R} \times \R^r.
\]
Then $\Sigma_{-D_1,.., -D_r} \cap \op{Conv}(\nu_{-D_1, ..., -D_r})$ is a regular triangulation of $\nu_{-D_1,.., -D_r}$.
\label{prop: regular triangulation vector bundle}
\end{proposition}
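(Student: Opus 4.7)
The plan is to produce explicit nonnegative weights $\omega = (w_v)_{v \in \nu_{-D_1,\ldots,-D_r}}$ for which the fan $\Sigma_\omega$ obtained by the lower-hull construction coincides with $\Sigma_{-D_1,\ldots,-D_r}$. These weights will come from a strictly convex $\Sigma_{-D_1,\ldots,-D_r}$-linear support function, whose existence is guaranteed by the semiprojectivity of $X_{\Sigma_{-D_1,\ldots,-D_r}}$.

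First I would verify that $\Sigma_{-D_1,\ldots,-D_r} \cap \op{Conv}(\nu_{-D_1,\ldots,-D_r})$ is actually a simplicial subdivision. Simpliciality of $\Sigma_{-D_1,\ldots,-D_r}$ follows from that of $\Sigma$: each top-dimensional cone $\sigma_{-D_1,\ldots,-D_r}$ has ray generators $\{u_\rho + \sum_i a_{i\rho}e_i : \rho \in \sigma(1)\} \cup \{e_i\}_{i=1}^r$, which are linearly independent since the $u_\rho$ are, so the primitive ray generators of $\Sigma_{-D_1,\ldots,-D_r}$ are exactly the points of $\nu_{-D_1,\ldots,-D_r}$. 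The affine hyperplane condition required by the definition of triangulation is provided by the nef-partition identity $\sum_i D_i = -K$ (equivalently $\sum_i a_{i\rho} = 1$ for every $\rho \in \Sigma(1)$), which places every element of $\nu_{-D_1,\ldots,-D_r}$ in the hyperplane $\{c_1 + \cdots + c_r = 1\} \subseteq N_{\R} \times \R^r$. Since $\Sigma_{-D_1,\ldots,-D_r}$ has convex support equal to $\op{Cone}(\nu_{-D_1,\ldots,-D_r})$, slicing by this hyperplane recovers $\op{Conv}(\nu_{-D_1,\ldots,-D_r})$.

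For regularity, I would exploit semiprojectivity. Iteratively applying Lemma~\ref{lem: semiprojective} (the pullback of a nef toric divisor under a toric vector bundle projection is again nef, so the induction goes through) shows $X_{\Sigma_{-D_1,\ldots,-D_r}}$ is semiprojective. By standard toric geometry (\S 7.2 of \cite{CLS}), there then exists a strictly convex $\Sigma_{-D_1,\ldots,-D_r}$-linear support function $\phi$ on $|\Sigma_{-D_1,\ldots,-D_r}|$. After adding a linear function, I may arrange $\phi(v) \geq 0$ for every $v \in \nu_{-D_1,\ldots,-D_r}$, and set $w_v := \phi(v)$.

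To finish, I would identify $\Sigma_\omega$ with $\Sigma_{-D_1,\ldots,-D_r}$ by analyzing the lifted cone $C_{\nu,\omega} = \op{Cone}\{(v, \phi(v)) : v \in \nu_{-D_1,\ldots,-D_r}\} \subseteq N_{\R} \times \R^r \times \R$. Strict convexity of $\phi$ on each top-dimensional $\tau \in \Sigma_{-D_1,\ldots,-D_r}$ means that the face generated by $\{(v, \phi(v)) : v \in \tau(1)\}$ is a facet of $C_{\nu,\omega}$ whose inward normal has positive last coordinate, and every other lifted point lies strictly above it; conversely every such facet arises this way. Projecting back recovers exactly $\Sigma_{-D_1,\ldots,-D_r}$, so $\Sigma_{-D_1,\ldots,-D_r} \cap \op{Conv}(\nu_{-D_1,\ldots,-D_r})$ is the regular triangulation cut out by $\omega$. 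The main obstacle will be this final identification of the lower hull with the cones of $\Sigma_{-D_1,\ldots,-D_r}$; although standard in toric geometry, it requires careful bookkeeping in the present setting where one must keep track of both types of rays (the $e_i$ versus the $u_\rho + \sum_i a_{i\rho}e_i$) and the extra $\R^r$-factor.
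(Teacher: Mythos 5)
Your argument is correct in substance, but it takes a more hands-on route than the paper. The paper's proof is essentially an assembly of citations: it observes that $|\Sigma_{-D_1,\ldots,-D_r}| = \op{Cone}(\nu_{-D_1,\ldots,-D_r})$ and that the rays of $\Sigma_{-D_1,\ldots,-D_r}$ are exactly the points of $\nu_{-D_1,\ldots,-D_r}$ (both by definition), establishes semiprojectivity of $X_{\Sigma_{-D_1,\ldots,-D_r}}$ by iterating Lemma~\ref{lem: semiprojective} (as you do), and then invokes Proposition 14.4.1 of \cite{CLS} to conclude that the pair consisting of this simplicial semiprojective fan and the set $I_\emptyset=\emptyset$ determines a full-dimensional chamber of the GKZ fan, after which Theorem~\ref{thm: GKZ chamber bijection} immediately identifies $\Sigma_{-D_1,\ldots,-D_r}\cap\op{Conv}(\nu_{-D_1,\ldots,-D_r})$ as a regular triangulation. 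You instead bypass the secondary-fan machinery and produce the weights directly from a support function $\phi$ strictly convex with respect to $\Sigma_{-D_1,\ldots,-D_r}$, whose existence is the toric criterion for projectivity over the affinization; verifying that the lower hull of the lifted points projects back onto the cones of $\Sigma_{-D_1,\ldots,-D_r}$ is exactly an unpacking of the proof of the results of \cite{CLS} that the paper cites. Your route buys self-containedness (only the definition of regular triangulation is needed), while the paper's is shorter and, by going through Proposition 14.4.1, produces the GKZ chamber itself, which is the object the secondary-fan dictionary of Theorem~\ref{thm: GKZ chamber bijection} is built on. Two small points of bookkeeping in your sketch: first, with the convexity convention of \cite{CLS} (where nef corresponds to support functions that are concave in the classical sense) the correct weights are $-\phi(v)$ rather than $\phi(v)$, normalized to be nonnegative by adding a suitable multiple of the linear functional $\sum_{i=1}^r e_i^*$; second, your use of $\sum_i a_{i\rho}=1$ to place $\nu_{-D_1,\ldots,-D_r}$ in an affine hyperplane invokes the nef-partition specialization made just before the proposition rather than the bare hypotheses of the statement --- the paper relies on the same ambient assumption (compare Lemma~\ref{lem: CY property for nef partition}), so this is consistent with the intended context, but it is worth making explicit.
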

\begin{proof}
First, $|\Sigma_{-D_1,.., -D_r}| = \op{Cone}(\nu_{-D_1, ..., -D_r})$ by definition.  
Second, $X_{\Sigma_{-D_1,.., -D_r}}$ is semiprojective by iterating Lemma~\ref{lem: semiprojective}.
Third, the rays of $\Sigma_{-D_1,.., -D_r}$ are precisely the elements of $\nu$, also by definition.  

Therefore the fan $\Sigma_{-D_1,.., -D_r}$ and the set $I_\emptyset = \emptyset$ satisfy the conditions of Proposition 14.4.1 of \cite{CLS}. Hence, we get a chamber of the GKZ fan corresponding to the GKZ cone associated to this fan and set.

By Theorem~\ref{thm: GKZ chamber bijection}, $\Sigma_{-D_1,.., -D_r} \cap \op{Conv}(\nu_{-D_1, ..., -D_r})$ is a regular triangulation of $\nu$.
\end{proof}

Since the above proposition requires $\Sigma$ to be simplicial, if we want to apply it more generally, we will need to replace an arbitrary fan $\Sigma^{\op{pre}}$ with a simplicial refinement. 

\begin{proposition}
\label{prop: simplicialization}
Every fan $\Sigma^{\op{pre}}$ has a refinement $\Sigma$ with the following properties:
\begin{enumerate}
\item $\Sigma$ is simplicial.
\item $\Sigma^{\op{pre}}(1) = \Sigma(1)$.
\item $\Sigma$ contains every simplicial cone of $\Sigma^{\op{pre}}$.
\item $\Sigma$ is obtained from $\Sigma^{\op{pre}}$ by a sequence of star subdivisions.
\item The induced toric morphism $X_\Sigma \to X_{\Sigma^{\op{pre}}}$ is projective.
\end{enumerate}
\end{proposition}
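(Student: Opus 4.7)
The plan is to construct $\Sigma$ by a terminating sequence of star subdivisions of $\Sigma^{\op{pre}}$ performed only at rays that already belong to $\Sigma^{\op{pre}}(1)$. Recall that the star subdivision of a fan at a ray $\rho$ replaces every cone $\sigma$ containing $\rho$ by the cones $\op{Cone}(\rho,\tau)$, where $\tau$ ranges over faces of $\sigma$ not containing $\rho$. Because $\rho$ is an existing ray, this operation creates no new $1$-cones, and by its local nature it leaves every already-simplicial cone of the fan untouched. Hence, provided such a sequence of operations terminates with a simplicial fan, properties (2), (3), and (4) are automatic.

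To produce the sequence, I would induct on the complexity $c(\Sigma') := \sum_{\sigma}\bigl(|\sigma(1)| - \dim \sigma\bigr)$, summed over all non-simplicial cones $\sigma$ of the current fan $\Sigma'$. At each step, pick a non-simplicial cone $\sigma$ of minimal dimension and any $\rho \in \sigma(1)$. Minimality of $\dim \sigma$ ensures that every facet of $\sigma$ is simplicial, so each new cone $\op{Cone}(\rho,\tau)$ arising from subdividing $\sigma$ has linearly independent ray generators and is therefore simplicial; for any cone $\sigma' \supsetneq \sigma$ containing $\rho$, the induced pieces $\op{Cone}(\rho,\tau')$ inherit the simplicial combinatorics of their corresponding face of $\sigma'$. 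A direct bookkeeping argument shows $c$ strictly decreases, so the process terminates and produces a simplicial refinement, establishing property (1).

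The main obstacle is property (5): an arbitrary sequence of star subdivisions at existing rays need not produce a projective morphism $X_\Sigma \to X_{\Sigma^{\op{pre}}}$. To arrange projectivity, I would choose the entire simplicialization coherently from a single strictly convex piecewise-linear function. Concretely, pick a sufficiently generic function $\varphi \colon \Sigma^{\op{pre}}(1) \to \R_{>0}$ and, on each cone $\sigma \in \Sigma^{\op{pre}}$, take the regular triangulation cut out by the lower envelope of the lifts $\rho \mapsto (\rho,\varphi(\rho))$, using only the rays in $\sigma(1)$. Gluing these triangulations across faces (which is compatible because they share lifts) yields the refinement $\Sigma$. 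Then $\varphi$ extends to a $\Sigma$-piecewise-linear support function that is strictly convex relative to $\Sigma^{\op{pre}}$, so the associated $\Q$-Cartier divisor is relatively ample for $X_\Sigma \to X_{\Sigma^{\op{pre}}}$, yielding projectivity. Finally, a regular triangulation of a polytopal complex using only its existing vertices may be realized by iterated pulling moves at those vertices, which on the fan side are exactly star subdivisions at existing rays; this reconciles the coherent construction with property (4) and preserves the already-simplicial cones of $\Sigma^{\op{pre}}$ because $\varphi$ is already linear on them.
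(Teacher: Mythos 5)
The paper does not really give an argument here: its proof is a citation to Proposition 11.1.7 of \cite{CLS}, whose proof follows the strategy of your second paragraph (star subdivisions at rays already present, which immediately give (2)--(4)), and obtains (5) from the fact that \emph{every} star subdivision --- including one at an existing ray --- induces a projective toric morphism (Proposition 11.1.6 of \cite{CLS}), together with the fact that a composite of projective toric morphisms is projective. Measured on its own terms, your proposal has two genuine gaps. The first is termination: your invariant $c(\Sigma') = \sum_{\sigma \text{ non-simplicial}}(|\sigma(1)|-\dim\sigma)$ need not decrease under your rule. Take $\Sigma'$ to be the fan of faces of the $5$-dimensional cone over $P = Q_5\times Q_5$, a product of two pentagons ($25$ rays), and let $\rho$ be the ray over a vertex $p$ of $P$; your rule permits this choice, since $\rho$ lies on a $3$-dimensional non-simplicial face (a cone over a quadrilateral $2$-face), and these have minimal dimension. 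Star subdivision at $\rho$ destroys exactly the non-simplicial cones containing $\rho$: the big cone (defect $20$), four prism facets (defect $6$ each), four quadrilateral faces (defect $1$ each) and two pentagonal faces (defect $2$ each), total $52$; but it creates the cones $\op{Cone}(\rho,\tau)$ for every non-simplicial face $\tau$ not containing $p$, namely six prism facets (defect $6$), twenty-one quadrilaterals (defect $1$) and eight pentagons (defect $2$), total $73$, since $\op{Cone}(\rho,\tau)$ has the same defect as $\tau$. So $c$ increases by $21$. Your procedure can still be shown to terminate, because every newly created non-simplicial cone has dimension strictly larger than the minimal dimension of a non-simplicial cone, so the number of non-simplicial cones of that minimal dimension strictly drops; but that is a different bookkeeping from the one you assert.

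The second gap is the reconciliation of (4) and (5). Your worry that star subdivisions at existing rays might fail (5) is unfounded --- this is exactly what CLS Proposition 11.1.6 supplies, and with it your first construction would finish once termination is repaired. Instead you pass to a generic lower-envelope (regular) triangulation to get (5), and then claim that any regular triangulation using only the existing rays can be realized by iterated pulling moves, i.e.\ by star subdivisions at existing rays. That claim is false: pulling triangulations form a proper subclass of regular triangulations. Concretely, a star subdivision at an existing ray fan-triangulates every pentagonal $2$-dimensional face containing that ray and never touches it again, so any fan obtained from $\Sigma^{\op{pre}}$ by star subdivisions at existing rays induces a ``fan'' triangulation at a single vertex on each pentagonal face, whereas a generic height function produces non-fan regular triangulations of such a face. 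Hence your second construction satisfies (1), (2), (3), (5) but not (4), while your first satisfies (2), (3), (4) but is left without a correct proof of (1) or any proof of (5); the missing ingredient tying everything together is precisely the projectivity of each individual star subdivision, which is how the cited CLS proposition proceeds.
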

\begin{proof}
This is Proposition 11.1.7 of \cite{CLS}.
\end{proof}

\begin{lemma}
Let $K := -\sum_{\rho \in \Sigma(1)} D_\rho$ and $D_i \neq 0$ be effective divisors on $X_{\Sigma}$ such that $D_1 + ... + D_r = -K$ as subvarieties.   Then $|\Sigma_{-D_1,\ldots, -D_r}|$ is a Gorenstein cone and
the primitive ray generators of $\Sigma_{-D_1, ..., -D_r}$ lie in 
$|\Sigma_{-D_1, ..., -D_r}|_{(1)}$.
\label{lem: CY property for nef partition}
\end{lemma}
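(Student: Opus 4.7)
My plan is to produce an explicit degree element $\deg^\vee \in \overline N = N \oplus \Z^r$ so that every primitive ray generator pairs to $1$ with it. This is essentially a bookkeeping computation using the single combinatorial input that $\sum_i D_i = -K$ as a divisor (not merely up to linear equivalence).

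First I would enumerate the primitive ray generators of $\Sigma_{-D_1,\ldots,-D_r}$. From the construction of $\Sigma_{D_1,\ldots,D_r}$ recalled in the text, swapping the sign of each $D_i$ yields
\[
\sigma_{-D_1,\ldots,-D_r} = \op{Cone}\!\bigl(u_\rho + \textstyle\sum_i a_{i\rho} e_i \mid \rho \in \sigma(1)\bigr) + \op{Cone}(e_i \mid i \in I), \quad I \subseteq \{1,\ldots,r\}.
\]
So the rays of $\Sigma_{-D_1,\ldots,-D_r}$ are generated by the vectors $e_i \in \overline N$ and the vectors $(u_\rho, a_{1\rho}, \ldots, a_{r\rho})$. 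Each $e_i$ is obviously primitive, and since $u_\rho$ is primitive in $N$ the vector $(u_\rho, a_{1\rho},\ldots, a_{r\rho})$ is primitive in $\overline N$ (any common factor would have to divide $u_\rho$). These are exactly the elements of the set $\nu_{-D_1,\ldots,-D_r}$ from Proposition~\ref{prop: regular triangulation vector bundle}.

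Next I would take the candidate degree element
\[
\deg^\vee := (0, \underbrace{1,\ldots,1}_{r}) \in N \oplus \Z^r,
\]
and verify that each primitive ray generator lies on the hyperplane $\{x \in \overline M_{\R} \mid \langle x, \deg^\vee\rangle = 1\}$. For $e_i$ this is immediate: $\langle e_i, \deg^\vee\rangle = 1$. For the other rays we compute
\[
\bigl\langle (u_\rho, a_{1\rho}, \ldots, a_{r\rho}), \deg^\vee \bigr\rangle = \sum_{i=1}^r a_{i\rho}.
\]
The hypothesis $D_1 + \cdots + D_r = -K = \sum_{\rho \in \Sigma(1)} D_\rho$ as subvarieties (not just divisor classes) says precisely that the coefficient of each $D_\rho$ in $\sum_i D_i$ equals $1$, i.e.\ $\sum_i a_{i\rho} = 1$ for all $\rho \in \Sigma(1)$. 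Hence every primitive ray generator of $\Sigma_{-D_1,\ldots,-D_r}$ lies on the affine hyperplane cut out by $\deg^\vee$.

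Since $|\Sigma_{-D_1,\ldots,-D_r}|$ is generated (as a cone) by these lattice points, and they all lie in the affine hyperplane $\{\langle -, \deg^\vee\rangle = 1\}$, the cone is Gorenstein by definition, with degree element $\deg^\vee = (0,1,\ldots,1)$; and by construction the primitive ray generators lie in the first slice $|\Sigma_{-D_1,\ldots,-D_r}|_{(1)}$, proving both assertions. There is no real obstacle here — the whole content is to see that the ``nef-partition'' equality $\sum_i D_i = -K$ at the level of subvarieties translates exactly into the hyperplane condition $\sum_i a_{i\rho} = 1$ needed for Gorensteinness of the total-space cone.
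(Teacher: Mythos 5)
Your proof is correct and takes essentially the same route as the paper: both enumerate the ray generators $e_i$ and $(u_\rho, a_{1\rho},\ldots,a_{r\rho})$, observe that the equality $\sum_i D_i = -K$ as subvarieties forces $\sum_i a_{i\rho}=1$ for every $\rho$, and take $\deg^\vee=\sum_{i=1}^r e_i^*$ as the degree element (your primitivity argument, that any common factor of $(u_\rho,a_{1\rho},\ldots,a_{r\rho})$ would divide the primitive vector $u_\rho$, is even a bit more direct than the paper's, which first uses effectiveness to deduce that exactly one $a_{i\rho}$ equals $1$). The only cosmetic slip is that $\deg^\vee$ lives in the dual lattice $M\oplus\Z^r$, not in $N\oplus\Z^r$ where the cone sits.
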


\begin{proof}
By definition the ray generators are
\[
\Sigma_{-D_1, ..., -D_r}(1) = \{e_1, ..., e_r\} \cup \{u_\rho + \sum_{i=1}^r a_{i\rho}e_i | \ \rho \in \Sigma\} \subseteq (N \times \Z^r)_{\R}.
\]
Now, since by assumption,
\[
D_1 + ... + D_n = -K =  \sum_{\rho \in \Sigma}D_\rho
\] 
we see that for each $\rho$ there is a unique $i$ such that $a_{i\rho}  = 1$  and $a_{i\rho} = 0$ otherwise.  Since the $u_\rho$ are primitive, this implies the ray generators above are also primitive.  Now simply notice that setting $\op{deg}^\vee := \sum_{i=1}^r e_i^*$ the above observation is equivalent to the fact that all the ray generators lie in the affine hyperplane $\langle \op{deg}^\vee, - \rangle =1$.  Hence $|\Sigma_{-D_1, ..., -D_r}|$ is Gorenstein and the primitive ray generators of $\Sigma_{-D_1, ..., -D_r}$ lie in 
$|\Sigma_{-D_1, ..., -D_r}|_{(1)}$.
\end{proof}

\begin{proposition}
Let $\Delta$ be a reflexive polytope and $\Sigma$ be its normal fan.  Consider a nef partition, $\Delta_1, ..., \Delta_r$ and the associated divisors $D_i$ given by the sections corresponding to $p_i \in \Delta_i$.
The cone $|\Sigma_{-D_1, \ldots, -D_r}|$ is a completely split reflexive Gorenstein cone of index $r$ whose dual cone is the Cayley cone of $\Delta_1, ..., \Delta_r$ and the primitive ray generators of the fan $\Sigma_{-D_1, \ldots, -D_r}$ lie in $|\Sigma_{-D_1, \ldots, -D_r}|_{(1)}$.
\label{prop: summary}
\end{proposition}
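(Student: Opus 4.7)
The plan is to obtain all three assertions by assembling Lemma~\ref{lem: CY property for nef partition}, Lemma~\ref{lem: dual cone is Cayley cone}, and criterion (d) of Proposition~\ref{prop: Cayley comparison}. No new geometric input is required; the proof is largely a bookkeeping exercise tracking degree elements through the primal-dual correspondence.

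First, I would verify that the hypothesis of Lemma~\ref{lem: CY property for nef partition} is in force. Since $\Delta$ is reflexive, $\sum_i a_{i\rho} = 1$ for every ray $\rho \in \Sigma(1)$, and the nef-partition condition $\sum_i p_i = 0$ (reflexivity of $\Delta$ places its unique interior lattice point at the origin) ensures that the effective divisors cut out by the sections corresponding to $p_i$ satisfy $D_1 + \cdots + D_r = -K$ as subvarieties, not merely up to linear equivalence. The lemma then gives that $|\Sigma_{-D_1, \ldots, -D_r}|$ is Gorenstein with $\deg^\vee = \sum_i e_i^*$ and with all primitive ray generators lying on the first slice, establishing the last assertion of the proposition. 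Applying Lemma~\ref{lem: dual cone is Cayley cone} identifies the dual cone with the Cayley cone $\R_{\geq 0}(\Delta_1 * \cdots * \Delta_r)$, which gives the second assertion and, because each primitive ray generator of the form $v + e_i^*$ (with $v$ a vertex of $\Delta_i$) pairs to $1$ with $\sum_j e_j$, also shows that this dual is itself Gorenstein. Hence $|\Sigma_{-D_1, \ldots, -D_r}|$ is reflexive Gorenstein with $\deg = \sum_j e_j$, and the index computes to
\[
\langle \deg, \deg^\vee\rangle = \Bigl\langle \sum_i e_i, \sum_j e_j^* \Bigr\rangle = r.
\]

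For the remaining claim that the cone is completely split, I would invoke criterion (d) of Proposition~\ref{prop: Cayley comparison} with the candidate elements $e_i' := p_i + e_i^*$. Since $p_i \in \Delta_i$, each $e_i'$ lies in the generator cone $\R_{\geq 0}(\Delta_i + e_i^*)$ and hence in the Cayley cone $\sigma^\vee$, and
\[
\sum_i e_i' = \sum_i p_i + \sum_i e_i^* = 0 + \deg^\vee = \deg^\vee.
\]
This realizes $|\Sigma_{-D_1, \ldots, -D_r}|$ as a Cayley cone on $r$ lattice polytopes, matching its index, so the cone is completely split. The only subtlety I anticipate is keeping the primal/dual conventions for the various degree elements straight across the two ambient lattices $N \oplus \Z^r$ and $M \oplus \Z^r$; no deeper obstacle is expected, as all the structural work has already been done in the preceding lemmas.
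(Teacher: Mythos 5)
Your proof is correct, and its overall shape matches the paper's: both are assemblies of Lemma~\ref{lem: CY property for nef partition} and Lemma~\ref{lem: dual cone is Cayley cone}. The one genuine difference is how the ``completely split reflexive Gorenstein of index $r$'' claim is obtained. The paper simply invokes Proposition~\ref{prop: BN nef}: since $\Delta_1+\cdots+\Delta_r=\Delta$ is a nef partition, condition (c) there holds, so by (a) the dual of the Cayley cone --- which Lemma~\ref{lem: dual cone is Cayley cone} identifies with $|\Sigma_{-D_1,\ldots,-D_r}|$ --- is completely split reflexive Gorenstein of index $r$. You instead re-derive that implication by hand: you check that the Cayley cone is itself Gorenstein with degree element $\sum_j e_j$ (its generators lie on the level-one hyperplane), compute the index $\langle \sum_j e_j, \sum_i e_i^*\rangle = r$, and then verify splitness via criterion (d) of Proposition~\ref{prop: Cayley comparison} with the explicit lattice elements $e_i' = p_i + e_i^*$, using $\sum_i p_i = 0$. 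This is essentially an inline reproof of the relevant direction of Proposition~\ref{prop: BN nef}, so it costs a bit more verification (including the bookkeeping that $p_i$ lies in the section polytope of $D_i$ and that $\sum_i a_{i\rho}=1$, which the paper's setup before the proposition already encodes in the hypothesis $D_1+\cdots+D_r=-K$ as subvarieties), but it buys transparency: the splitting data is exhibited concretely rather than quoted, which is exactly the structure reused later in the proof of Theorem~\ref{BNconjecturetheorem}. Either route is complete and correct.
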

\begin{proof}
This is just an assembly of previous results.
By Lemma~\ref{lem: dual cone is Cayley cone},  $|\Sigma_{-D_1, \ldots, -D_r}|$ is the dual cone to the Cayley cone of $\Delta_1, \ldots, \Delta_r$.  By Proposition~\ref{prop: BN nef}, it follows that $|\Sigma_{-D_1, \ldots, -D_r}|$ is a completely split reflexive Gorenstein cone of index $r$.
By Lemma~\ref{lem: CY property for nef partition}, the primitive ray generators lie in $|\Sigma_{-D_1, \ldots, -D_r}|_{(1)}$.
\end{proof}

 \section{Proof of the Batyrev-Nill Conjecture}

\label{sec: BN}
Let $\sigma \subseteq \overline M_{\R}$ be a completely-split reflexive Gorenstein cone of index $r$.  Furthermore, suppose we have multiple splittings,
\[
\sigma = \R_{\geq 0}(\Delta_1 * \ldots * \Delta_r) = \R_{\geq 0}(\Delta_1' * \ldots * \Delta_r').
\]

\begin{remark}
 As a warning to the reader, we reverse the roles of $\Delta_i$ and $\nabla_i$ in the original notation.
 That is, following the setup of \cite{BN07}, we would have have an isomorphism
 \[
 \R_{\geq 0}(\nabla_1 * \ldots * \nabla_r) = \R_{\geq 0}(\nabla_1' * \ldots * \nabla_r')
\]
instead of our assumption that
\[
 \R_{\geq 0}(\Delta_1 * \ldots * \Delta_r) = \R_{\geq 0}(\Delta_1' * \ldots * \Delta_r').
\]
The statement of results does not require both nef-partitions,  hence the reversal.
\end{remark}

By Proposition~\ref{prop: Cayley comparison}, the degree element can be simultaneous expressed as a sum of nonzero elements,
\[
\op{deg}^\vee = \sum_{i=1}^r e_i = \sum_{i=1}^r e_i'.
\]
where 
\[
\Delta_i := \{x \in \sigma_{(1)} | \  \langle x, e_j \rangle =0 \text{ for } j \neq i \} 
\]
and similarly
\[
\Delta_i' := \{x \in \sigma_{(1)} | \  \langle x, e_j' \rangle =0 \text{ for } j \neq i \}.
\]

Let 
\[
w := \sum_{m \in \sigma_{(1)}} c_m x^m.
\]
Since
\[
\sigma_{(1)} = \Delta_1 * \ldots * \Delta_r = \Delta_1' * \ldots * \Delta_r',
\]
we can write
\[
w = \sum_{i=1}^r w_i = \sum_{i=1}^r w_i'
\]
where 
\[
w_i := \sum_{m \in \Delta_i}  c_m x^m \tand w_i' := \sum_{m \in \Delta_i'}  c_m x^m.
\]
We set the Minkowski sums of polytopes
\[
\Delta := \Delta_1 + \ldots + \Delta_r \tand \Delta' := \Delta_1' + \ldots + \Delta_r'
\]

We may view $w_i$ as a section of $\O_{\mathcal X_{\Delta}}(\chi_{D_i})$ where $D_i$ is the divisor associated to $\Delta_i$ and similarly view $w_i'$ as a section of $\O_{\mathcal X_{\Delta'}}(\chi_{D_i'})$ where $D_i'$ is the divisor associated to $\Delta_i'$.

Let $\Sigma$ (respectively $\Sigma'$) be a simplicial refinement of the normal fan to $\Delta$ (respectively $\Delta'$) satisfying the properties of Proposition~\ref{prop: simplicialization} (this corresponds to Batyrev's MPCP desingularization in the introduction).
Let $Z$ be the closed substack of  $\mathcal X_{\Sigma}$ defined by the common zeros of the pullbacks of $w_i$ and $Z'$ be the closed substack  of $\mathcal X_{\Sigma'}$ defined by the common zeros of the pullbacks of the $w_i'$.

\begin{remark}
If $X_\Sigma$ is smooth then by Theorem~\ref{thm: stack realization},
\[
\mathcal X_\Sigma \cong X_\Sigma
\]
and $Z$ is the closed subvariety of $X_\Sigma$ defined by the common zeros of the pullbacks of the $w_i$.  Similarly, if $X_\Sigma'$ is smooth, then $Z'$ is the closed subvariety of $X_{\Sigma'}$ defined by the common zeros of the pullbacks of the $w_i'$.
\end{remark}

The following theorem is the stacky interpretation of Conjecture 5.3 of Batyrev and Nill in \cite{BN07}, following the clarification of this conjecture in Remark 5.4 of loc. cit.

\begin{theorem}\label{BNconjecturetheorem}
Assume that $Z,Z'$ have the expected dimension  $\op{dim }\sigma - 2r.$
  Then there is an equivalence of categories
\[
\dbcoh{Z} \cong \dbcoh{Z'}.
\]

\end{theorem}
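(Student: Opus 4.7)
The plan is to identify both total spaces $\op{tot}(\bigoplus_i \O_{\mathcal X_\Sigma}(-D_i))$ and $\op{tot}(\bigoplus_i \O_{\mathcal X_{\Sigma'}}(-D_i'))$ with two open substacks of a common global quotient $[\A^{|\nu|}/(S_\nu \times \gm)]$ corresponding to different GKZ chambers, to exhibit a single superpotential $w$ on $\A^{|\nu|}$ whose two natural groupings recover the defining sections of $Z$ and $Z'$, and finally to chain together Isik--Shipman (Theorem~\ref{thm: isik-shipman}) with Herbst--Walcher (Theorem~\ref{thm: HW}).

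First, I apply Proposition~\ref{prop: summary} to each of the nef partitions $(\Delta_i)$ and $(\Delta_i')$: the cones $|\Sigma_{-D_1, \ldots, -D_r}|$ and $|\Sigma'_{-D_1', \ldots, -D_r'}|$ each coincide with $\sigma^\vee$, and their primitive ray generators lie in the common affine slice $\sigma^\vee_{(1)}$. Let $\nu \subseteq \sigma^\vee_{(1)} \cap \overline N$ be any finite set containing the primitive rays of both fans. By Corollary~\ref{cor: reduce triangulation}, this enlargement of rays does not change the Cox stacks. Proposition~\ref{prop: regular triangulation vector bundle} shows that both fans correspond to regular triangulations of $\nu$, so Theorem~\ref{thm: GKZ chamber bijection} realizes the two total spaces as two (possibly non-adjacent) GKZ chambers inside $[\A^{|\nu|}/S_\nu]$.

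Second, I construct the common superpotential $w := \sum_{m \in \sigma_{(1)} \cap \overline M} c_m \, x^{f_\nu(m)}$ on $\A^{|\nu|}$. Every term is $S_\nu$-invariant since $f_\nu(m)$ lies in the kernel of $\Z^{|\nu|} \to \op{coker}(f_\nu)$, and Lemma~\ref{lem: CY triangulation} gives the quasi-Calabi-Yau condition on $S_\nu$ because $\nu$ lies in the affine hyperplane $\langle \deg, -\rangle = 1$. I declare the R-charge to be the $\gm$-action on $\A^{|\nu|}$ scaling the coordinates $x_{e_1}, \ldots, x_{e_r}$ with weight one and fixing the others; each monomial of $w$ then has R-charge $\sum_{i=1}^r \langle m, e_i\rangle = \langle m, \deg^\vee\rangle = 1$, so $w$ is semi-invariant of weight one. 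Since $\sum e_i = \sum e_i' = \deg^\vee$, the analogous R-charge built from $\{e_i'\}$ acts identically on every $S_\nu$-invariant monomial and so descends to the same $\gm$-action on $[\A^{|\nu|}/S_\nu]$. Grouping the monomials of $w$ according to the Cayley decomposition $\sigma_{(1)} = \bigsqcup_i \Delta_i$ (and similarly $\sigma_{(1)} = \bigsqcup_i \Delta_i'$) yields $w = \sum_i w_i = \sum_i w_i'$; by Proposition~\ref{prop: vector bundle on a stack}, the $w_i$ (resp.\ $w_i'$) are precisely the pulled-back defining sections of $Z \subseteq \mathcal X_\Sigma$ (resp.\ $Z' \subseteq \mathcal X_{\Sigma'}$).

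Finally, Theorem~\ref{thm: isik-shipman} applies in both cases: the expected-dimension hypothesis $\dim Z = \dim Z' = \dim \sigma - 2r$ forces $(w_1, \ldots, w_r)$ and $(w_1', \ldots, w_r')$ to be regular sequences, guaranteeing the required Koszul exactness. Writing $Y_\Sigma$ and $Y_{\Sigma'}$ for the zero loci of $w$ inside the two open substacks of $\A^{|\nu|}$ respectively, this gives
\[
\dbcoh{Z} \cong \dsing{[Y_\Sigma / S_\nu \times \gm]} \tand \dbcoh{Z'} \cong \dsing{[Y_{\Sigma'} / S_\nu \times \gm]}.
\]
Theorem~\ref{thm: HW}, with the quasi-Calabi-Yau condition already established, equates these two categories of singularities across GKZ chambers, yielding the desired $\dbcoh{Z} \cong \dbcoh{Z'}$. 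I expect the main obstacle to be the combinatorial bookkeeping in the second step: verifying that the monomial expansion of $w$ on $\A^{|\nu|}$, regrouped according to each of the two Cayley decompositions of $\sigma$, genuinely produces the sections $w_i$ (resp.\ $w_i'$) under Proposition~\ref{prop: vector bundle on a stack}, and that the two natural R-charge conventions truly coincide on $[\A^{|\nu|}/(S_\nu \times \gm)]$.
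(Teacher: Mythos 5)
Your proposal is correct and follows essentially the same route as the paper: realize both bundle total spaces as GKZ chambers for $S_\nu$ acting on $\A^{|\nu|}$ with $\nu \subseteq \sigma^\vee_{(1)}$, use the single superpotential $w$ regrouped according to the two Cayley structures, and chain Isik--Shipman with Herbst--Walcher under the quasi-Calabi-Yau condition. The only step you compress is the one you flag yourself: the paper makes the agreement of the two $R$-charges precise by observing that their difference is a one-parameter subgroup landing in $S_\nu$ (because $\sum_i e_i = \sum_i e_i' = \deg^\vee$) and absorbing it into an explicit automorphism $F$ of $S_\nu \times \gm$ that preserves the projection character, which is the rigorous form of your claim that the primed $R$-charge ``descends to the same $\gm$-action.''
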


\begin{proof}
Take the set $\nu = N \cap \sigma^\vee_{(1)}$.  The set $\nu$ manifestly satisfies the Calabi-Yau condition, hence satisfies the quasi-Calabi-Yau condition for $S_\nu$ by Lemma~\ref{lem: CY triangulation}.  

Let $\Sigma$ (respectively $\Sigma'$) be a simplicial refinement of the normal fan to $\Delta$ (respectively $\Delta'$) satisfying the properties of Proposition~\ref{prop: simplicialization}.  By Propositions~\ref{prop: regular triangulation vector bundle} and~\ref{prop: summary}, the collection $\Sigma_{-D_1, \ldots,-D_r}$ provides a regular triangulation of $\sigma^\vee$ with vertices in $\nu$.  Similarly, the collection $\Sigma'_{-D_1', \ldots,-D_r'}$ where $\Sigma'$ is the normal fan to $\Delta'$ provides another regular triangulation of $\sigma^\vee$ with vertices in $\nu$.

By Theorem~\ref{thm: GKZ chamber bijection}, these two regular triangulations correspond to chambers of the GKZ fan for the action of $S_\nu$ on $\A^{|\nu|}$ with Cox open sets which we enumerate by $U_p,U_q$ respectively so that by Proposition~\ref{prop: vector bundle on a stack},
\[
[U_p / S_\nu] = \mathcal X_{\Sigma_{-D_1, \ldots, -D_r}} = \op{tot}(\oplus_{i=1}^r \O_{\mathcal{X}_{\Sigma}}(\chi_{-D_i}))
\]
and 
\[
[U_q / S_\nu] = \mathcal X_{\Sigma'_{-D_1', \ldots, -D_r'}} = \op{tot}(\oplus_{i=1}^r \O_{\mathcal X_\Sigma'}(\chi_{-D_i'})).
\]

Also notice that $w$ is an $S_\nu$ invariant function since it descends to sections $\oplus w_i \in \oplus \op{H}^0(\O(D_i))$ and $\oplus w_i' \in \oplus \op{H}^0(\O(D_i'))$ respectively.

Now we add $R$-charge.  This is a $\gm$ action on $\A^{|\nu|}$, which for $x_v$ and $s \in \gm$ is given explicitly as
\begin{equation}
s \cdot x_v := 
\begin{cases}
s x_v & \tif v \in \{e_1, \ldots, e_r\}\\
x_v & \tif v \notin \{e_1, \ldots, e_r\}.\\
\end{cases}
\label{eq: Rcharge}
\end{equation}
This $R$-charge is the fiberwise dilation for $\mathcal X_{\Sigma_{-D_1, \ldots, -D_r}}$ under the isomorphism with $[U_p/S_{\nu}]$, i.e., there is an isomorphism 
\begin{equation}
[U_p/S_{\nu} \times \gm] \cong [\mathcal X_{\Sigma_{-D_1, \ldots, -D_r}}/ \gm]
\label{eq: realization}
\end{equation}
where $\gm$ acts by fiberwise dilation.

We now check that this can be realized as fiberwise dilation on the quotient 
\[
[U_q/S_{\nu} \times \gm] \cong [\mathcal X_{\Sigma'_{-D_1', \ldots, -D_r'}}/ \gm].
\]
This is not immediate and requires the use of an automorphism of $S_{\nu} \times \gm$.

Without loss of generality, we may reorder our sets so that 
\[
\{e_1, \ldots, e_r\} \cap \{e_1', \ldots, e_r'\} = \{e_1, \ldots, e_c \} = \{e_1', \ldots, e_c' \}
\]
  Then we have an equality
\begin{equation}
\sum_{i =c+1}^{r} e_i = \sum_{i =c+1}^{r} e_i'.   
\label{eq: Cayley relation}
\end{equation}

Also consider the  $\gm$ acting on $\A^{\nu}$ so that for $v \in \nu$, $s \in \gm$ acts by
\begin{equation}
s \cdot x_v := 
\begin{cases}
\frac{1}{s}x_v & \tif v = e_i \text{ for } c+1 \leq i \leq r\\
s x_v & \tif v = e_i' \text{ for } c+1 \leq  i  \leq r \\
x_v & \text{otherwise}\\
\end{cases}
\label{eq: fix Rcharge}
\end{equation}
We claim that $\gm \subseteq S_\nu$. By definition, $S_\nu$ lies in an exact sequence
\[
0 \longrightarrow S_\nu \overset{\widehat{\pi}}{\longrightarrow} \gm^{n} \overset{\widehat{f_{\nu}}}{\longrightarrow} \gm^d
\]
and the $\gm$ above lies in $\gm^{n}$, so it is enough to check that $\widehat{f_\nu}(\gm)$ is trivial (see Equation~\eqref{eq: f} for a definition). 
This follows from the vanishing of  
\[
f_\nu ( \sum_{i=c+1}^r e_{e_i} - \sum_{i=c+1}^r e_{e_i'}  ) = \sum_{i=c+1}^r e_i - \sum_{i=c+1}^r e_i' = 0
\]
since after applying $\op{Hom}(- , \gm)$ the vector $\sum_{i=c+1}^r e_{e_i} -  \sum_{i=c+1}^r e_{e_i'} $ is the generator of $\gm$.

Now notice that $S_\nu \subseteq \gm^{\nu}$ so that projection onto the factor corresponding to $v \in \nu $ gives a character $\chi_v$.
Setting $v = e_r'$ splits the above $\gm \subseteq S_\nu$.

Let $S_\nu = \overline{S_\nu} \times \gm$ using the above splitting.   There is an automorphism \begin{align*}
F : (\overline{S_\nu} \times \gm) \times \gm & \to (\overline{S_\nu} \times \gm) \times \gm \\
(\bar{s}, s_1, s_2) & \mapsto (\bar{s}, s_2 s_1, s_2)). 
\end{align*}

Under $F$, the action of $S_\nu \times 1$ on $\A^{|\nu|}$ is the same as $F(S_\nu \times 1 ) = S_\nu \times 1$.  However, the projection action of $1 \times \gm$, becomes the action of the element $F(1,1,s) = (1,s,s)$.

The action of $(1,t,1)$ is given by Equation~\eqref{eq: fix Rcharge} and the action of $(1,1,s)$ is given by Equation~\eqref{eq: Rcharge}.  Combining these two equations we get:
\begin{equation}
(1,s,s) \cdot x_v := 
\begin{cases}
s x_v & \tif v \in \{e_1', \ldots, e_r'\}\\
 x_v & \tif v \notin \{e_1', \ldots, e_r'\}\\
\end{cases}
\end{equation}
This is the $R$-charge on $X_{\Sigma_{D_1', \ldots, D_r'}} = [U_q / S_\nu]$ i.e., using the automorphism $F$, we have realized an isomorphism 
\begin{equation}
[U_q/S_{\nu} \times \gm] \cong [\mathcal X_{\Sigma_{-D_1', \ldots, -D_r'}}/ \gm]
\label{eq: realization'}
\end{equation}
where $\gm$ acts by fiberwise dilation. We now have \begin{align*}
\dbcoh{Z} & \cong \dsing{[Z_p/S_\nu \times \gm]} \\
& \cong \dsing{[Z_q/S_\nu \times \gm]} \\ 
& \cong \dbcoh{Z'}
\end{align*}
where the first line is Theorem~\ref{thm: isik-shipman}  (uses  Equation~\eqref{eq: realization} and the assumption that $Z$ has the expected dimension), the second line is Theorem~\ref{thm: HW}, and the third line is Theorem~\ref{thm: isik-shipman} again (uses Equation~\eqref{eq: realization'} and the assumption that $Z'$ has the expected dimension).
\end{proof}

\begin{remark}
It does not follow from the above result that $Z$ and $Z'$ are birational.  While, $Z$ and $Z'$ are both open substacks of the critical locus of $w$ modulo $S_\nu$, they lie on different irreducible components.  Indeed, $Z \subseteq Z( x_{e_1}, \ldots, x_{e_r} )$ while $Z' \subseteq Z(x_{e_1'}, \ldots, x_{e_r'})$.  Moreover, $Z( x_{e_1}, \ldots, x_{e_r} )$ and $Z(x_{e_1'}, \ldots, x_{e_r'})$ may be unstable loci which are exchanged when varying GIT quotients for this action of $S_\nu$.  
\label{rem: not birational}
\end{remark}

\begin{remark}
Assuming only that $w_i \neq 0$, the theorem still holds if we replace $Z,Z'$ by their derived intersections in the sense of derived algebraic geometry.  Namely, the Koszul complex on $w_i$ and $w_i'$ respectively form differential graded schemes on $\mathcal X_{\Delta}, \mathcal X_{\Delta'}$ and the derived categories of these differential graded schemes are equivalent, see Remark 3.7 of \cite{Isik}.
\label{rem: dg scheme}
\end{remark}

\end{document}